\newcommand{\Cset}{\mathbb{C}}
\newcommand{\Qset}{\mathbb{Q}}
\newcommand{\Rset}{\mathbb{R}}
\newcommand{\Zset}{\mathbb{Z}}
\theoremstyle{plain}
\newtheorem{thm}{Theorem}
\newtheorem{pro}[thm]{Proposition}
\newtheorem{lem}[thm]{Lemma}
\newtheorem{cor}[thm]{Corollary}
\newtheorem{con}{Conjecture}
\newdefinition{defi}{Definition}
\newdefinition{remark}{Remark}
\newdefinition{example}{Example}
\newcommand{\rmE}{{\rm E}}
\newcommand{\rmH}{{\rm H}}
\newcommand{\rme}{{\rm e}}
\newcommand{\rmd}{\mskip2mu{\rm d}\mskip1mu}
\newcommand{\rmm}{{\rm m}}
\newcommand{\rmp}{{\rm p}}
\newcommand{\HS}{{{\rm hom},{\rm sym}}}
\newcommand{\Symmetric}{{\rm sym}}
\newcommand{\Rank}{\mathop{\rm rank}\nolimits}
\newcommand{\Divide}[2]{{#1}\,\big| \, {#2}}
\newcommand{\Order}{\mathop{\rm O}\nolimits}
\begin{document}

\begin{frontmatter}

\title{On Cayley conditions for billiards inside ellipsoids\tnoteref{t1}}

\tnotetext[t1]{Research supported in part by
MICINN-FEDER grants MTM2009-06973 and MTM2012-31714 (Spain)
and CUR-DIUE grant 2009SGR859 (Catalonia).
Useful conversations with Pablo S. Casas, Yuri Fedorov, and Bernat Plans
are gratefully acknowledged.}

\author{Rafael Ram\'{\i}rez-Ros}

\ead{Rafael.Ramirez@upc.edu}

\address{Departament de Matem\`{a}tica Aplicada I,
         Universitat Polit\`{e}cnica de Catalunya,
         Diagonal 647, 08028 Barcelona, Spain}

\begin{abstract}
All the segments (or their continuations) of a billiard trajectory
inside an ellipsoid of $\Rset^n$ are tangent to $n-1$
quadrics of the pencil of confocal quadrics determined by the ellipsoid.
The quadrics associated to periodic billiard trajectories verify
certain algebraic conditions.
Cayley found them in the planar case.
Dragovi\'{c} and Radnovi\'{c} generalized them to any dimension.
We rewrite the original matrix formulation of these
\emph{generalized Cayley conditions} as a simpler polynomial one.
We find several remarkable algebraic relations between caustic parameters
and ellipsoidal parameters that give rise to nonsingular periodic trajectories.
\end{abstract}

\begin{keyword}
billiard \sep integrable system \sep periodic trajectory \sep Cayley condition
\MSC 37J20 \sep 37J35 \sep 37J45 \sep 70H06 \sep 14H99
\end{keyword}

\end{frontmatter}

\section{Introduction}

One of the best known discrete integrable system is the billiard inside ellipsoids.
All the segments (or their continuations) of a billiard trajectory
inside an ellipsoid of $\Rset^n$ are tangent to
$n-1$ quadrics of the pencil of confocal quadrics determined
by the ellipsoid~\cite{KozlovTreschev1991,Tabachnikov1995,Tabachnikov2005}.
This situation is fairly exceptional.
Quadrics are the only smooth hypersurfaces of $\Rset^n$, $n\ge 3$,
that have caustics~\cite{Berger1995,Gruber1995}.
A \textit{caustic} is a smooth hypersurface with the property
that a billiard trajectory, once tangent to it,
stays tangent after every reflection.
Caustics are a geometric manifestation of the integrability of
billiards inside ellipsoids.

Periodic trajectories are the most distinctive trajectories,
so their study is the first task.
There exist two remarkable results about periodic billiard trajectories
inside ellipsoids:
the \textit{generalized Poncelet theorem} and
the \textit{generalized Cayley conditions}.

A classical geometric theorem of Poncelet~\cite{Poncelet1822,GriffichsHarris1977}
implies that if a billiard trajectory inside an ellipse is periodic,
then all the trajectories sharing its caustic are also periodic.
Its generalization to the spatial case was proved by Darboux~\cite{Darboux1887}.
The extension of this result to arbitrary dimensions can be found
in~\cite{ChangFriedberg1988,Chang_etal1993a,Chang_etal1993b,Previato1999}.
The generalized Poncelet theorem can be stated as follows.
If a billiard trajectory inside an ellipsoid is closed after $m_0$ bounces
and has length $L_0$,
then all trajectories sharing the same caustics are also closed
after $m_0$ bounces and have length $L_0$.
Thus, a natural question arises.
What caustics do give rise to periodic trajectories?
The planar case was solved by Cayley~\cite{Cayley1854,GriffichsHarris1978}
in the XIX century.
Dragovi\'{c} and Radnovi\'{c}~\cite{DragovicRadnovic1998a,DragovicRadnovic1998b}
found some \emph{generalized Cayley conditions} for billiards inside ellipsoids
fifteen years ago.
They have also stated similar conditions in other billiard frameworks;
see~\cite{DragovicRadnovic2004,DragovicRadnovic2006,DragovicRadnovic2006b,
DragovicRadnovic2008,DragovicRadnovic2012}.

For simplicity, let us focus on the spatial case.
Let $Q : x^2/a + y^2/b + z^2/c = 1$ be the triaxial ellipsoid
with ellipsoidal parameters $0 < c < b < a$.
Any billiard trajectory inside $Q$ has as caustics two elements
of the family  of confocal quadrics
\[
Q_\lambda =
\left\{
(x,y,z) \in \Rset^3 :
\frac{x^2}{a-\lambda} + \frac{y^2}{b-\lambda} + \frac{z^2}{c-\lambda} = 1
\right\}.
\]
We restrict our attention to nonsingular trajectories.
That is, trajectories with two different caustics
which are ellipsoids: $0 < \lambda < c$,
hyperboloids of one sheet: $c < \lambda < b$,
or hyperboloids of two sheets: $b < \lambda < a$.
The singular values $\lambda \in \{a,b,c\}$ are discarded.
There exist some restrictions on the caustics $Q_{\lambda_1}$ and $Q_{\lambda_2}$.
It is known that EH1, H1H1, EH2, and H1H2 are the only
feasible caustic types; see~\cite{Knorrer1980,Audin1994}.
The meaning of these notations is evident.

The generalized Cayley condition in this context
can be expressed as follows.
The billiard trajectories inside the triaxial ellipsoid $Q$
sharing the caustics $Q_{\lambda_1}$ and $Q_{\lambda_2}$
are periodic with \emph{elliptic period} $m \ge 3$ if and only if
\[
\Rank
\left(
\begin{array}{ccc}
f_{m + 1}  & \cdots & f_4 \\
\vdots   &        & \vdots \\
f_{2m - 1} & \cdots & f_{m + 2}
\end{array}
\right) < m - 2,
\]
where $f(t) = \sum_{l\ge 0} f_l t^l :=
\sqrt{(1-t/a)(1-t/b)(1-t/c)(1-t/\lambda_1)(1-t/\lambda_2)}$.
We claim that most results related to generalized Cayley conditions
become simpler when expressed in terms of the inverse quantities
$1/a$, $1/b$, $1/c$, $1/\lambda_1$, and $1/\lambda_2$.
Proposition~\ref{pro:C_n_n} is a paradigmatic example.

The elliptic period is defined in Section~\ref{sec:Preliminaries}.
Roughly speaking,
the difference between the period $m_0$ and the elliptic period $m$
of the periodic billiard trajectories sharing two given caustics
is that \emph{all} those trajectories close
in Cartesian (respectively, elliptic) coordinates after exactly
$m_0$ (respectively, $m$) bounces.
We will see that either $m = m_0/2$ or $m = m_0$.

The previous matrix formulation is very nice from a theoretical point of view,
but it has strong limitations from a computational point of view.
We will see in Section~\ref{sec:Practical} that
it can be written as a system of two homogeneous symmetric polynomial
equations with rational coefficients of degrees
$m^2 -2$ and $m^2 - 1$ in the variables
$1/a$, $1/b$, $1/c$, $1/\lambda_1$, and $1/\lambda_2$.
Thus, both degrees grow \emph{quadratically} with the elliptic period $m$,
which turns this approach into a tough challenge.
In particular, to our knowledge,
the caustic parameters $\lambda_1$ and $\lambda_2$
have never been explicitly expressed in terms of the ellipsoidal parameters
$a$, $b$, and $c$ for any $m \ge 3$.

We will rewrite this matrix formulation as a computationally more appealing one
which gives rise to (non-symmetric) homogeneous polynomial equations
whose degrees are smaller than the elliptic period $m$.
We will find the following remarkable algebraic relations between caustic and
ellipsoidal parameters using the new formulation.
The billiard trajectories inside the ellipsoid $Q$ sharing the caustics
$Q_{\lambda_1}$ and $Q_{\lambda_2}$ are periodic with:
\begin{itemize}
\item
Elliptic period $m=3$ if the roots of $t^3-(t-c)(t-b)(t-a)$
are the caustic parameters;
\item
Elliptic period $m = 4$ if there exists $d \in \Rset$ such that the roots of
$t^4 - (t-d)^2(t-\lambda_1)(t-\lambda_2)$ are the ellipsoidal parameters; and
\item
Elliptic period $m=5$ if the roots of
$t^5-(t-c)(t-b)(t-a)(t-\lambda_1)(t-\lambda_2)$ are double ones.
\end{itemize}

Let us compare both formulations in the third case.
On the one hand,
the two homogeneous symmetric polynomial equations obtained from
the matrix formulation have degrees 23 and 24 in the variables
$1/a$, $1/b$, $1/c$, $1/\lambda_1$, and $1/\lambda_2$.
On the other hand, $t^5-(t-c)(t-b)(t-a)(t-\lambda_1)(t-\lambda_2)$ is a polynomial
of degree four in a single variable.
Clearly, the polynomial formulation leads to a much simpler problem.
Nevertheless, it should be stressed that the matrix formulation determines
\emph{all} periodic billiard trajectories with elliptic period $m=5$.
On the contrary, we find just a \emph{subset} of such trajectories using
the polynomial $t^5-(t-c)(t-b)(t-a)(t-\lambda_1)(t-\lambda_2)$.

Another natural question about periodic billiard trajectories
is the following one.
Which are the triaxial ellipsoids of $\Rset^3$ that display periodic billiard
trajectories with a fixed caustic type and a fixed (elliptic) period?
A numerical approach to that question was considered in~\cite{CasasRamirez2011},
where the authors computed several bifurcations in the space of ellipsoidal
parameters.
We will find the algebraic relations that define the bifurcations
associated to small elliptic periods.
For instance, we will see that there exist periodic billiard trajectories
with elliptic period $m = 3$ and caustic type EH1 if and only if
$c < ab/(a+b+\sqrt{ab})$.

For brevity,
we will not depict billiard trajectories
inside triaxial ellipsoids of $\Rset^3$.
The reader interested in 3D graphical visualizations is referred
to~\cite{CasasRamirez2012}, where several periodic billiard trajectories
with small periods are displayed from different perspectives.

We complete this introduction with a note on the organization of the article.
In Section~\ref{sec:Preliminaries} we review briefly some well-known results
about billiards inside ellipsoids, recalling the matrix formulation of
the generalized Cayley conditions obtained by Dragovi\'{c} and Radnovi\'{c}.
We also introduce the concept of elliptic period.
The practical limitations of the matrix formulation are exposed
in Section~\ref{sec:Practical}.
Next, we present the polynomial formulation in Section~\ref{sec:Polynomial}.
In Section~\ref{sec:Minimal} we carry out a detailed analysis for minimal
elliptic periods,
whereas the study of more general elliptic periods is postponed to
Section~\ref{sec:General}.
The previous results are adapted to billiards inside ellipses of $\Rset^2$
and inside triaxial ellipsoids of $\Rset^3$ in
sections~\ref{sec:Planar} and~\ref{sec:Spatial}, respectively.

\section{Preliminaries}
\label{sec:Preliminaries}

In this section we recall several classical results
and their modern generalizations about billiards inside ellipsoids
that go back to Jacobi, Chasles, Poncelet, Darboux, and Cayley.

We consider the billiard dynamics inside the ellipsoid
\begin{equation} \label{eq:Ellipsoid}
Q =
\left\{
x = (x_1,\ldots,x_n) \in \Rset^n : \sum_{i=1}^n \frac{x_i^2}{a_i} = 1
\right\},\quad
0 < a_1 < \cdots < a_n.
\end{equation}
The degenerate cases in which the ellipsoid has some symmetry of revolution
are not considered here.
This ellipsoid is an element of the family of confocal quadrics
\[
Q_{\lambda} =
\left\{
x = (x_1,\ldots,x_n) \in \Rset^n : \sum_{i=1}^n \frac{x_i^2}{a_i - \lambda} = 1
\right\}, \qquad \lambda \in \Rset.
\]
We note that $Q_\lambda = \emptyset$ for $\lambda > a_n$.
Thus, there are exactly $n$ different geometric types of nonsingular quadrics
in this family, which correspond to the cases
\[
\lambda \in (-\infty,a_1),\quad
\lambda \in (a_1,a_2),\quad \ldots, \quad
\lambda \in (a_{n-1},a_n).
\]
For instance, the confocal quadric $Q_\lambda$ is an ellipsoid
if and only if $\lambda \in (-\infty,a_1)$.
On the other hand, the meaning of $Q_\lambda$ in the singular cases
$\lambda \in \{ a_1, \ldots, a_n\}$ is
\[
Q_{a_j} = H_j =
\left\{ x=(x_1,\ldots,x_n) \in \Rset^n : x_j = 0 \right\}.
\]

The following theorems of Jacobi and Chasles can be found
in~\cite{KozlovTreschev1991,Tabachnikov1995,Tabachnikov2005}.

\begin{thm}[Jacobi]
\label{thm:Jacobi}
Any generic point $x \in \Rset^n$ belongs to exactly $n$
distinct nonsingular quadrics $Q_{\mu_0},\ldots,Q_{\mu_{n-1}}$ such
that $\mu_0 < a_1 < \mu_1 < a_2 < \cdots < a_{n-1} < \mu_{n-1} < a_n$.
\end{thm}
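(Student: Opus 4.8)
The plan is to reduce the geometric statement to the analysis of a single rational function of $\lambda$. Fix a generic point $x = (x_1,\ldots,x_n)$, by which I mean one whose coordinates all satisfy $x_i \neq 0$. By the definition of the confocal family, $x \in Q_\lambda$ if and only if $\phi(\lambda) = 1$, where
\[
\phi(\lambda) = \sum_{i=1}^n \frac{x_i^2}{a_i - \lambda}.
\]
Thus the entire theorem amounts to counting and locating the real solutions of $\phi(\lambda) = 1$ among the nonsingular values $\lambda \notin \{a_1,\ldots,a_n\}$.

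First I would establish monotonicity. Differentiating termwise gives $\phi'(\lambda) = \sum_{i=1}^n x_i^2/(a_i-\lambda)^2$, which is strictly positive for generic $x$ on every interval free of poles. Hence $\phi$ is strictly increasing on each of the $n+1$ intervals $(-\infty,a_1)$, $(a_1,a_2)$, \ldots, $(a_{n-1},a_n)$, $(a_n,+\infty)$ cut out by its poles $a_1 < \cdots < a_n$. Next I would read off the boundary behaviour on each interval and apply the intermediate value theorem. Since $x_j \neq 0$, the term $x_j^2/(a_j-\lambda)$ forces $\phi(\lambda) \to +\infty$ as $\lambda \to a_j^-$ and $\phi(\lambda) \to -\infty$ as $\lambda \to a_j^+$; moreover $\phi(\lambda) \to 0^+$ as $\lambda \to -\infty$ and $\phi(\lambda) \to 0^-$ as $\lambda \to +\infty$.

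Combining these facts: on $(-\infty,a_1)$ the function $\phi$ climbs from $0^+$ to $+\infty$, producing exactly one root $\mu_0$; on each $(a_j,a_{j+1})$ with $1 \le j \le n-1$ it climbs from $-\infty$ to $+\infty$, producing exactly one root $\mu_j$; and on $(a_n,+\infty)$ it climbs from $-\infty$ only to $0^-$ and never attains the value $1$, so it contributes no root. Strict monotonicity guarantees uniqueness of each root within its interval, and the interleaving $\mu_0 < a_1 < \mu_1 < a_2 < \cdots < a_{n-1} < \mu_{n-1} < a_n$ is automatic from the intervals in which the roots lie.

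Finally, to confirm that no solution has been overlooked, I would clear denominators: multiplying $\phi(\lambda)=1$ by $\prod_i(a_i-\lambda)$ turns the equation into a polynomial one whose right-hand side $\prod_i(a_i-\lambda)$ has degree $n$ while its left-hand side has degree $n-1$, so the equation has degree exactly $n$ and hence at most $n$ real roots. Having already exhibited $n$ distinct real roots, I conclude that all roots are accounted for, simple, and real, so the $n$ quadrics $Q_{\mu_0},\ldots,Q_{\mu_{n-1}}$ are genuinely distinct and nonsingular. I expect the only delicate point to be the genericity hypothesis: the sign analysis at the poles rests entirely on $x_i \neq 0$ for every $i$, so the main (if modest) task is to record precisely which points are excluded, namely those lying on one of the coordinate hyperplanes $H_j = \{x_j = 0\}$, i.e. on a singular quadric of the pencil.
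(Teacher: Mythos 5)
Your argument is correct and complete: the reduction to counting solutions of $\phi(\lambda)=1$, the strict monotonicity of $\phi$ on each of the $n+1$ intervals determined by its poles, and the sign analysis at the poles and at $\pm\infty$ together give exactly one root in each of $(-\infty,a_1),(a_1,a_2),\ldots,(a_{n-1},a_n)$ and none in $(a_n,+\infty)$, which is precisely the interlacing claimed; your identification of ``generic'' with ``off all coordinate hyperplanes'' also matches the paper's own gloss following the theorem. The paper itself gives no proof --- it cites this as a classical result of Jacobi from the billiards literature --- and your argument is the standard one found in those references, so there is nothing to compare beyond noting that the concluding degree count is a pleasant but unnecessary extra check.
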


We denote by $\mu = \left( \mu_0, \ldots, \mu_{n-1} \right) \in \Rset^n$
the \emph{Jacobi elliptic coordinates} of the point
$x = \left( x_1, \ldots, x_n \right)$.
Cartesian and elliptic coordinates are linked by relations
\[
x^2_j =
\frac{\prod_{i = 0}^{n-1} ( a_j - \mu_i)}{\prod_{i \neq j} ( a_j - a_i)},
\qquad j = 1, \ldots, n.
\]
Hence, a point has the same elliptic coordinates that its
orthogonal reflections with respect to the coordinate subspaces of $\Rset^n$.
A point is \emph{generic}, in the sense of Theorem~\ref{thm:Jacobi},
if and only if it is outside all coordinate hyperplanes.
When a point tends to the coordinate hyperplane $H_j$,
some of its elliptic coordinates tends to $a_j$.

\begin{thm}[Chasles]
\label{thm:Chasles}
Any line in $\Rset^n$ is tangent to exactly $n-1$ confocal quadrics
$Q_{\lambda_1},\ldots,Q_{\lambda_{n-1}}$.
\end{thm}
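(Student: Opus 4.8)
The plan is to convert the geometric tangency condition into a single polynomial equation in $\lambda$ and count its roots. First I would parametrize an arbitrary line as $x(t) = p + t v$ with $p,v \in \Rset^n$, $v \neq 0$, and substitute it into the equation of $Q_\lambda$. This produces the quadratic
\[
F_\lambda(t) = A(\lambda) t^2 + 2 B(\lambda) t + C(\lambda), \qquad
A = \sum_{i=1}^n \frac{v_i^2}{a_i-\lambda},\quad
B = \sum_{i=1}^n \frac{p_i v_i}{a_i-\lambda},\quad
C = \sum_{i=1}^n \frac{p_i^2}{a_i-\lambda} - 1 .
\]
The line is tangent to $Q_\lambda$ precisely when $F_\lambda$ has a double root, that is, when the discriminant $D(\lambda) := B(\lambda)^2 - A(\lambda) C(\lambda)$ vanishes. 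Setting $P(\lambda) = \prod_{i=1}^n (a_i - \lambda)$ and clearing denominators, $\Phi(\lambda) := P(\lambda)^2 D(\lambda)$ is a genuine polynomial, so the whole question reduces to counting the zeros of $\Phi$ that yield \emph{nonsingular} quadrics, namely those with $\lambda \notin \{a_1,\dots,a_n\}$.

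The second step is a degree-and-divisibility bookkeeping. Both $PA$ and $PB$ are polynomials of degree $n-1$, while $PC$ has degree $n$ because of the $-1$ term; hence $\Phi = (PB)^2 - (PA)(PC)$ has degree $2n-1$, its top-degree term $-\abs{v}^2 \lambda^{2n-1}$ coming from $(PA)(PC)$ and being nonzero since $v \neq 0$. I would then check, by evaluating at a singular value, that each $a_k$ is a root: at $\lambda = a_k$ one has $P(a_k)=0$, so only the terms carrying the single surviving factor $\Pi := \prod_{j\neq k}(a_j-a_k)$ remain, and they combine into $p_k^2 v_k^2 \Pi^2 - v_k^2 p_k^2 \Pi^2 = 0$. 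Thus $P$ divides $\Phi$, and the tangent nonsingular confocal quadrics are exactly the zeros of the quotient $R(\lambda) := \Phi(\lambda)/P(\lambda)$, a polynomial of degree $n-1$. This already bounds the number of such quadrics by $n-1$ and shows that, counted with multiplicity over $\Cset$, the count is exactly $n-1$.

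The main obstacle is to upgrade this to \emph{exactly $n-1$ real, simple} tangencies, correctly distributed among the $n$ geometric types. For this I would rewrite $D$ via the Lagrange (Cauchy--Binet) identity,
\[
A C - B^2 = \sum_{1\le i<j\le n} \frac{(v_i p_j - v_j p_i)^2}{(a_i-\lambda)(a_j-\lambda)} \;-\; \sum_{i=1}^n \frac{v_i^2}{a_i-\lambda},
\]
which exhibits $-D$ as a rational function whose building blocks are the \emph{nonnegative} Plücker squares $w_{ij} = (v_i p_j - v_j p_i)^2$ and the nonnegative weights $v_i^2$; this positivity is exactly the input that forces reality. The strategy is then a sign and pole analysis of $D$ on the intervals cut out by $a_1 < \cdots < a_n$: a short computation shows the double poles of $B^2$ and $AC$ cancel, so $D = R/P$ has only simple poles at the $a_k$, and one locates a real root of $R$ in each relevant interval by tracking the signs of $D$ at the ends, the interval containing each root fixing whether the corresponding quadric is an ellipsoid or a hyperboloid of one or two sheets. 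The delicate points I expect here are controlling the residue signs at the poles $a_k$ — which mix the Plücker contributions $w_{kj}$ against the weight $v_k^2$ and are not uniformly signed — and handling non-generic lines (those meeting or parallel to a coordinate hyperplane, where some $v_i$ or $w_{ij}$ vanishes), for which I would argue by continuity and a limiting argument from generic lines rather than repeating the sign count.
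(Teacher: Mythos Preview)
The paper does not actually prove Chasles' theorem: it merely states it and refers the reader to the monographs~\cite{KozlovTreschev1991,Tabachnikov1995,Tabachnikov2005}. So there is no in-paper proof to compare your proposal against.

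That said, your outline is the standard route to the result and is essentially sound through the degree-and-divisibility step: the reduction to the discriminant $D(\lambda)$, the computation $\deg\Phi=2n-1$, the verification $\Phi(a_k)=0$, and the conclusion that the tangency locus is cut out by a degree-$(n-1)$ polynomial $R=\Phi/P$ are all correct and cleanly argued. The Lagrange identity you write is also correct and is exactly the tool the classical proofs use to force reality of the roots. The one place your sketch is honest-but-incomplete is the endgame: as you yourself flag, the residues of $D$ at the $a_k$ (equivalently the values $R(a_k)$) mix the nonnegative Pl\"ucker squares $w_{kj}$ against $v_k^2$ with alternating signs coming from $\prod_{l\neq k}(a_l-a_k)$, so a naive sign table does not immediately give one root per interval. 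The usual fix is either (i) to normalize $|v|=1$ and choose $p$ on the line so that $\sum p_iv_i/(a_i-\lambda)\equiv 0$ is replaced by the simpler condition $p\perp v$, which makes the sign analysis of $R$ on each interval $(a_k,a_{k+1})$ transparent, or (ii) to argue via the eigenvalues of the quadratic form that sends $v\mapsto\langle(D-\lambda I)^{-1}v,v\rangle$ restricted to the hyperplane orthogonal to the line, which are automatically real. Your continuity argument for degenerate lines is fine once the generic case is nailed down.
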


It is known that if two lines obey the reflection law at a point $x \in Q$,
then both lines are tangent to the same confocal quadrics.
Thus, all lines of a billiard trajectory inside the ellipsoid $Q$ are
tangent to exactly $n-1$ confocal quadrics $Q_{\lambda_1},\ldots,Q_{\lambda_{n-1}}$,
which are called \emph{caustics} of the trajectory,
whereas $\lambda_1,\ldots,\lambda_{n-1}$ are the \emph{caustic parameters}
of the trajectory.
We will say that a billiard trajectory inside $Q$
is \emph{nonsingular} when it has $n-1$ distinct nonsingular caustics.
We mostly deal with nonsingular billiard trajectories in this paper.

The caustic parameters cannot take arbitrary values.
For instance, a line cannot be tangent to two different confocal ellipsoids,
and all caustics parameters must be positive.
The following complete characterization was given in~\cite{Knorrer1980,Audin1994}.

\begin{pro}\label{pro:Existenceconditions}
Let $\lambda_1 < \cdots < \lambda_{n-1}$ be some real numbers such that
\[
\{ a_1,\ldots,a_n \} \cap \{ \lambda_1,\ldots,\lambda_{n-1} \} = \emptyset.
\]
Set $a_0 = 0$.
Then there exist nonsingular billiard trajectories inside the ellipsoid $Q$
sharing the caustics $Q_{\lambda_1},\ldots,Q_{\lambda_{n-1}}$ if and only if
\begin{equation}\label{eq:ExistenceCondition}
\lambda_k \in (a_{k-1}, a_k) \cup (a_k, a_{k+1}), \qquad 
k = 1,\ldots,n-1.
\end{equation}
\end{pro}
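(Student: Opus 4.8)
The plan is to detect reality through the sign of a single rational function attached to the tangent line, and then reduce the interlacing to an elementary counting argument. First I would fix a line $x = p + s v$ with $|v|=1$ and set $\alpha(\lambda) = \sum_i v_i^2/(a_i-\lambda)$, $\beta(\lambda) = \sum_i p_i v_i/(a_i-\lambda)$, $\gamma(\lambda) = \sum_i p_i^2/(a_i-\lambda) - 1$, so the line meets $Q_\lambda$ where $\alpha s^2 + 2\beta s + \gamma = 0$ and is tangent to $Q_\lambda$ exactly when $D(\lambda) := \beta^2 - \alpha\gamma = 0$. Using Lagrange's identity and the Pl\"ucker coordinates $\ell_{ij} = p_i v_j - p_j v_i$, one rewrites $D(\lambda) = \sum_i v_i^2/(a_i-\lambda) - \sum_{i<j} \ell_{ij}^2/[(a_i-\lambda)(a_j-\lambda)]$, a rational function with simple poles at $a_1,\ldots,a_n$ and numerator of degree $n-1$. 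By Chasles (Theorem~\ref{thm:Chasles}) its zeros are the $n-1$ caustic parameters, and matching leading terms (using $|v|=1$) fixes $D(\lambda) = \prod_{k=1}^{n-1}(\lambda_k - \lambda)\big/\prod_{j=1}^{n}(a_j - \lambda)$.

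The crucial observation is a sign lemma. For any point $x$ of the trajectory, with Jacobi coordinates $\mu_0 < a_1 < \cdots < a_{n-1} < \mu_{n-1} < a_n$, one has $x \in Q_{\mu_i}$, hence $\gamma(\mu_i) = 0$ and $D(\mu_i) = \beta(\mu_i)^2 \ge 0$. As $x$ runs over a nonsingular trajectory each $\mu_i$ sweeps a nondegenerate subinterval of $\overline{I_i}$, where $I_i = (a_i,a_{i+1})$ and $a_0 = 0$; so by continuity $D \ge 0$ on a nonempty band inside each $I_i$ for $i = 0,\ldots,n-1$, the band of $\mu_0$ reaching $\mu_0 = 0$ at the reflections on $Q$. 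Conversely $Q_\lambda = \emptyset$ for $\lambda > a_n$ forbids tangencies there, while a chord of $Q$ has interior points inside every confocal ellipsoid $Q_\lambda$ with $\lambda \le 0$ (convex, containing $Q$), so the line meets such $Q_\lambda$ in two distinct real points and $D(\lambda) > 0$. Thus $D < 0$ on $I_n = (a_n,\infty)$ and $D > 0$ on $(-\infty,0]$, whence all caustics lie in $(0,a_n)$.

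Next I would reduce everything to counting. Let $N_i$ be the number of caustics in $(0,a_{i+1})$ and $D_i := N_i - i$. Reading signs off the product form gives $\mathrm{sign}\,D(a_{i+1}^-) = (-1)^{D_i}$, and a glance at the zero/pole pattern shows $I_i$ carries a positive band if and only if it is \emph{not} the case that $m_i := N_i - N_{i-1} = 0$ while $D_i$ is odd. Hence reality is equivalent to the condition (C): whenever a gap contains no caustic, $D_i$ is even. Since $D_{-1} = 1$, $D_{n-1} = 0$, and each step changes $D_i$ by $m_i - 1 \ge -1$, an extremal-index argument closes it: at the largest $i^{*}$ with $D_{i^{*}} \ge 2$ one is forced to $D_{i^{*}+1} = 1$ with $m_{i^{*}+1} = 0$, contradicting (C) at $i^{*}+1$, and at the smallest $i^{*}$ with $D_{i^{*}} \le -1$ one is forced to $D_{i^{*}} = -1$ with $m_{i^{*}} = 0$, contradicting (C) at $i^{*}$. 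Therefore $D_i \in \{0,1\}$, i.e.\ $N_i \in \{i,i+1\}$ for all $i$, which is precisely $\lambda_k \in (a_{k-1},a_k) \cup (a_k,a_{k+1})$. Running the equivalence backwards gives sufficiency: interlacing yields $N_i \in \{i,i+1\}$, hence a nonempty positive band in each $I_i$; choosing $x \in Q$ with $\mu_i$ in these bands makes the elliptic-coordinate expressions for $v_i^2$ nonnegative, producing a real line tangent to all $n-1$ caustics, whose billiard trajectory keeps these caustics by the reflection law.

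The hard part will be the second paragraph: converting the \emph{geometric} existence of a real nonsingular trajectory into the \emph{analytic} statement that $D$ has a nonempty positive band in each of $I_0,\ldots,I_{n-1}$ and none in $I_n$. This needs the separation of variables for the billiard, so that the $n$ elliptic coordinates oscillate independently, each confined between consecutive zeros of $D$ or between the reflection value $0$ and the coordinate hyperplanes $a_j$; and, for sufficiency, the reconstruction of an actual direction $v \in \Rset^n$ from a prescribed band configuration, i.e.\ verifying that the standard formulas for $v_i^2$ in elliptic coordinates are nonnegative exactly on those bands. Once this dictionary between bands and trajectories is established, the combinatorial implication (C) $\Rightarrow D_i \in \{0,1\}$ is the easy and self-contained step.
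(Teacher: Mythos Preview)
The paper does not prove this proposition; it merely attributes it to Kn\"orrer~\cite{Knorrer1980} and Audin~\cite{Audin1994} in the sentence immediately preceding the statement. So there is no in-paper argument to compare against, and your proposal has to be read on its own merits.

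Your plan is sound and close in spirit to the cited sources. The discriminant identity $D(\lambda)=\prod_k(\lambda_k-\lambda)\big/\prod_j(a_j-\lambda)$ is correct (Chasles supplies the zeros, and the asymptotics $D(\lambda)\sim -1/\lambda$ coming from $|v|=1$ fix the constant), and the sign observation $D(\mu_i)=\beta(\mu_i)^2\ge 0$ at the elliptic coordinates of any point of the line is the right geometric input. The counting argument in your third paragraph checks out: ``every $I_i$ with $0\le i\le n-1$ carries a positive band'' is equivalent to your condition~(C), and the extremal-index step does force $D_i\in\{0,1\}$, which unpacks to~(\ref{eq:ExistenceCondition}).

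Your own diagnosis of the remaining work is accurate. For necessity you must show that along a nonsingular chord each $\mu_i$ is nonconstant, so that it sweeps an open subinterval of $I_i$ on which $D>0$; this is elementary but should be argued directly, without appealing to Theorem~\ref{thm:WindingNumbers}, whose description of the oscillation intervals $[c_{2j},c_{2j+1}]$ already presupposes the conclusion you want. For sufficiency, the reconstruction of a real direction $v$ from the band data is the genuine content; the ``standard formulas for $v_i^2$ in elliptic coordinates'' you allude to come precisely from the separation of variables worked out in the cited references, and verifying their positivity under~(\ref{eq:ExistenceCondition}) is where those papers do the work. In short: your proposal is a correct outline, and the parts you flag as hard are exactly where the substance lies.
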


\begin{defi}\label{defi:CausticType}
The \emph{caustic type} of a nonsingular trajectory is the vector
$\varsigma = (\varsigma_1,\ldots,\varsigma_{n-1}) \in \Zset^{n-1}$ such that
\[
\lambda_k \in (a_{\varsigma_k}, a_{\varsigma_k + 1}), \qquad
k = 1,\ldots,n-1.
\]
\end{defi}

We know from Proposition~\ref{pro:Existenceconditions}
that $\varsigma_k \in \{k-1,k\}$ for $k = 1,\ldots,n-1$.
Hence, there are exactly $2^{n-1}$ different caustic types.
The two caustic types in the planar case correspond to
ellipses: $\varsigma_1 = 0$, and hyperbolas: $\varsigma_1 = 1$.
The four caustic types in the spatial case correspond to
EH1: $\varsigma = (0,1)$, H1H1: $\varsigma = (1,1)$,
EH2: $\varsigma = (0,2)$, and H1H2: $\varsigma = (1,2)$.
The notations EH1, H1H1, EH2, and H1H2 were described in
the introduction.

Next, we recall a result about periodic billiard trajectories
inside ellipsoids.

\begin{thm}[Generalized Poncelet theorem]
\label{thm:Poncelet}
If a nonsingular billiard trajectory is closed after $m_0$ bounces
and has length $L_0$,
then all trajectories sharing the same caustics are also closed
after $m_0$ bounces and have length $L_0$.
\end{thm}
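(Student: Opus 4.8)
The plan is to exploit the integrability of the billiard inside $Q$. The natural phase space is the manifold $M$ of oriented chords of $Q$ (equivalently, pairs consisting of a boundary point and an inward unit velocity), equipped with its standard symplectic structure, on which the billiard map $B \colon M \to M$ acts as an exact symplectic diffeomorphism whose generating function is the chord length. By the Jacobi--Chasles theorems (Theorems~\ref{thm:Jacobi} and~\ref{thm:Chasles}), the $n-1$ caustic parameters $\lambda_1, \ldots, \lambda_{n-1}$ are functionally independent first integrals of $B$ in involution, and fixing them amounts to prescribing the common caustics. First I would invoke the Liouville--Arnold theorem: since the level sets are compact, each connected component of a regular common level set $\{\lambda_1 = c_1, \ldots, \lambda_{n-1} = c_{n-1}\}$ is a Lagrangian torus $\mathbb{T}^{n-1}$, and on it $B$ is smoothly conjugate to a rigid translation $T_\omega \colon \theta \mapsto \theta + \omega$ for a rotation vector $\omega = \omega(c_1, \ldots, c_{n-1})$ depending only on the torus.

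The periodicity statement then reduces to an elementary fact about rigid translations. If a single trajectory on the torus closes after $m_0$ bounces, then $T_\omega^{m_0}(\theta_0) = \theta_0$ for some $\theta_0$, which forces $m_0 \omega \in \Zset^{n-1}$; but this is a condition on $\omega$ alone, so $T_\omega^{m_0} = \mathrm{id}$ and \emph{every} orbit on the same torus closes after exactly $m_0$ bounces. Since $\omega$ is determined by the caustics, all trajectories sharing $Q_{\lambda_1}, \ldots, Q_{\lambda_{n-1}}$ are periodic with the same $m_0$. The one point requiring care is that a fixed choice of caustics may correspond to several connected tori; here I would use that the orthogonal reflections across the coordinate hyperplanes of $\Rset^n$ are symmetries of the confocal family that conjugate $B$ to itself and act transitively on the components, so the induced dynamics on the components are mutually conjugate and hence share the rotation vector and the period.

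For the length I would use the Liouville $1$-form $\alpha = \sum_i p_i\,\rmd q_i$, where $q$ denotes position and $p$ the unit velocity. Along each straight segment of a trajectory $\alpha$ integrates exactly to that segment's Euclidean length, while reflections at the boundary contribute nothing, so the total length after $m_0$ bounces is the period
\[
L_0 = \oint_\gamma \alpha,
\]
where $\gamma$ is the closed trajectory regarded as a $1$-cycle. Since $\gamma$ lies in the Lagrangian torus determined by the caustics, this period depends only on the homology class $[\gamma] \in H_1(\mathbb{T}^{n-1}; \Zset)$ and on the cohomology class of $\alpha$ restricted to the torus, i.e.\ on the action variables $I_j = \frac{1}{2\pi}\oint_{\gamma_j}\alpha$. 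The class $[\gamma]$ is recovered from $m_0\omega \in \Zset^{n-1}$, and both the action variables and $\omega$ are functions of the caustics alone. Hence $L_0$ is the same for all trajectories sharing those caustics.

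The main obstacle I anticipate is not the translation argument but the global differential-topological bookkeeping near the coordinate hyperplanes, where the Jacobi elliptic coordinates degenerate and a trajectory may graze a singular quadric $H_j$: there the naive torus picture breaks down and one must pass to a suitable branched cover of the level set. Equivalently, one realizes the linearization on the Jacobian of the hyperelliptic curve $\mu^2 = \prod_{i=1}^n (a_i - \lambda)\prod_{k=1}^{n-1}(\lambda_k - \lambda)$ attached to the caustics, on which $B$ becomes a translation by a fixed divisor class; periodicity is then $m_0$ times that class being trivial, manifestly independent of the initial point. This covering subtlety is also the source of the distinction between the period $m_0$ and the elliptic period $m$ announced in the introduction, so reconciling the Cartesian count of bounces with the winding on the (possibly doubly covered) torus is the delicate step; everything else is routine once the conjugacy to $T_\omega$ and the length-as-action identity are in place.
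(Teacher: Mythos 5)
The paper does not prove Theorem~\ref{thm:Poncelet}. It is recalled as a classical result: the planar case is attributed to Poncelet~\cite{Poncelet1822}, the spatial case to Darboux~\cite{Darboux1887}, and the general case to~\cite{ChangFriedberg1988,Chang_etal1993a,Chang_etal1993b,Previato1999}. So there is no internal proof to compare yours against; what you have written is an outline of the standard argument from those references, and as an outline it is sound. The two pillars --- periodicity of a rigid translation is a property of the rotation vector alone, hence of the torus and not of the initial point; and the total length of an $m_0$-periodic orbit is an action that is locally constant on an invariant Lagrangian torus --- are both correct, and your use of the exactness relation $B^*\alpha-\alpha=\rmd h$ with $h$ the chord length is the right mechanism for the length claim. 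For that second step you could in fact avoid homology classes and action variables entirely: for two $m_0$-periodic points $x,y$ on the same torus joined by a path $\sigma$ inside the torus, the difference of total lengths telescopes to $\int_\sigma\bigl((B^{m_0})^*\alpha-\alpha\bigr)$, which vanishes because $B^{m_0}$ restricts to the identity there.

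The one step that is asserted rather than proved, and that carries real content, is the treatment of connectedness. A common level set of the $n-1$ caustic integrals in the space of oriented chords is in general a disjoint union of several Lagrangian tori, and knowing that one orbit on one component is $m_0$-periodic says nothing a priori about the others. Your proposed fix --- that the reflections in the coordinate hyperplanes (together, implicitly, with time reversal) commute with $B$ and act transitively on the components --- is the standard remedy, but it is precisely the global statement that the cited proofs have to establish, and it depends on the caustic type; as written it is a claim, not an argument. By contrast, your closing worry about branched covers and the hyperelliptic Jacobian is not needed for this theorem: the translation argument takes place on the chord phase space and therefore counts Cartesian bounces directly, so the distinction between $m_0$ and the elliptic period $m$ of Definition~\ref{defi:EllipticPeriod} only becomes relevant for the Cayley condition of Theorem~\ref{thm:Cayley}, not for the Poncelet statement itself.
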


Poncelet proved this theorem for conics~\cite{Poncelet1822}.
Darboux generalized it to triaxial ellipsoids of $\Rset^3$; see~\cite{Darboux1887}.
Later on, this result was generalized to any dimension
in~\cite{ChangFriedberg1988,Chang_etal1993a,Chang_etal1993b,Previato1999}.

The periodic billiard trajectories sharing the same caustics
also have the same winding numbers.
In order to introduce these numbers, we set
\begin{equation}\label{eq:c}
\left\{ c_1,\ldots,c_{2n-1} \right\} =
\left\{ a_1,\ldots,a_n \right\} \cup
\left\{ \lambda_1,\ldots,\lambda_{n-1} \right\},
\end{equation}
and $f(t) = \sqrt{\prod_{i=1}^{2n-1}(1-t/c_i)}$.
We deal with nonsingular billiard trajectories inside
ellipsoids without symmetries of revolution,
so the parameters $c_1, \ldots, c_{2n-1}$ are pairwise distinct,
and we can assume that $c_0 := 0 < c_1 < \cdots < c_{2n-1}$.

\begin{thm}[Winding numbers]
\label{thm:WindingNumbers}
The nonsingular billiard trajectories inside the ellipsoid $Q$
sharing the caustics $Q_{\lambda_1},\ldots,Q_{\lambda_{n-1}}$
are periodic with period $m_0$ if and only if there exist some 
positive integer numbers $m_1,\ldots,m_{n-1}$ such that
\begin{equation}\label{eq:WindingNumbersIntegrals}
\sum_{j=0}^{n-1} (-1)^j m_j
\int_{c_{2j}}^{c_{2j+1}} \frac{t^i}{f(t)} \rmd t = 0,
\qquad \forall i=0,\ldots,n-2.
\end{equation}
Each of these periodic billiard trajectories has $m_j$ points at $Q_{c_{2j}}$
and $m_j$ points at $Q_{c_{2j+1}}$.
Besides,
$\{c_{2j},c_{2j+1}\} \cap \{a_1,\ldots,a_n\} \neq \emptyset
 \Rightarrow m_j$ even.
Finally, $\gcd(m_0,\ldots,m_{n-1}) \in \{1,2\}$.

Let $L_0$ be the common length of these periodic billiard trajectories.
Let $x(t)$ be an arc-length parametrization of any of these trajectories.
Let $\mu(t) = (\mu_0(t), \ldots, \mu_{n-1}(t))$ be the
corresponding parametrization in elliptic coordinates.
Then:
\begin{enumerate}
\item
$c_{2j} \leq \mu_j(t) \leq c_{2j+1}$ for all $t \in \Rset$.

\item
Functions $\mu_j(t)$ are smooth everywhere,
except $\mu_0(t)$ which is non-smooth at impact points
---that is, when $x(t_{\star}) \in Q$---,
in which case $\mu'_0 (t_{\star}+) = - \mu'_0(t_{\star}-) \neq 0$.
    
\item
If $\mu_j(t)$ is smooth at $t = t_{\star}$, then
$\mu_j'(t_{\star}) = 0 \Leftrightarrow
 \mu_j(t_{\star}) \in \{ c_{2j}, c_{2j+1} \}$.

\item
$\mu_j(t)$ makes exactly $m_j$ complete oscillations (round trips)
inside the interval $[c_{2j}, c_{2j+1}]$ along one period $0 \leq t \leq L_0$.

\item
$\mu(t)$ has period $L = L_0 / \gcd(m_0,\ldots,m_{n-1})$.
\end{enumerate}
\end{thm}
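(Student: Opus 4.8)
The plan is to integrate the billiard flow by separation of variables in the Jacobi elliptic coordinates of Theorem~\ref{thm:Jacobi}, and then to read off both the qualitative behaviour of the coordinates $\mu_j(t)$ and the closing condition (\ref{eq:WindingNumbersIntegrals}) from the resulting Abelian integrals. First I would write the trajectory between two consecutive impacts as a straight segment and express its motion in elliptic coordinates. A classical computation going back to Jacobi, recorded in~\cite{KozlovTreschev1991,Tabachnikov1995,Tabachnikov2005}, shows that, along such a segment parametrized by arc length, the coordinates decouple into
\[
\frac{\rmd \mu_j}{\rmd t} = \frac{2 \varepsilon_j \sqrt{ -\prod_{i=1}^{2n-1}(\mu_j - c_i) }}{\prod_{k \neq j} (\mu_j - \mu_k)}, \qquad \varepsilon_j \in \{+1,-1\},
\]
with the $c_i$ the sorted parameters from (\ref{eq:c}). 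Equivalently, on the genus-$(n-1)$ hyperelliptic curve $w^2 = \prod_{i=1}^{2n-1}(\mu - c_i)$ with differentials $\mu^i\,\rmd\mu/w$, the flow obeys $\sum_j \varepsilon_j\, \mu_j^i\,\rmd\mu_j / w(\mu_j) = 2\,\delta_{i,n-1}\,\rmd t$ for $i = 0, \ldots, n-1$. The caustics being fixed is exactly what makes the radicand one fixed polynomial, so each $\mu_j$ evolves in its own band independently of the others up to the common reparametrization by $t$.

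From this separated form, items (1)--(3) follow by inspecting the sign of the radicand. The product $\prod_{i=1}^{2n-1}(\mu-c_i)$ is negative precisely on the bands $(c_{2j},c_{2j+1})$, where an even number of factors is negative, so reality of $\dot\mu_j$ forces $c_{2j} \le \mu_j(t) \le c_{2j+1}$, which is item~(1). Each band endpoint that is a root of the radicand is a simple zero, hence a smooth turning point where $\dot\mu_j$ vanishes and reverses sign; conversely $\dot\mu_j=0$ only there, giving item~(3) and the smoothness of $\mu_j$ for $j\ge 1$. The one exception is $\mu_0$ at the lower endpoint $c_0=0$: since $0$ is not a root of the radicand, $-\prod_{i=1}^{2n-1}(0-c_i) = \prod_{i=1}^{2n-1} c_i > 0$, so $\dot\mu_0 \neq 0$ there. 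The locus $\mu_0=0$ is exactly the boundary $Q=Q_0$, i.e. an impact; the reflection law reverses the normal velocity, which in elliptic coordinates flips the sign of $\dot\mu_0$ while leaving the other $\dot\mu_k$ continuous. Hence $\mu_0$ has a corner with $\mu_0'(t_\star+) = -\mu_0'(t_\star-) \neq 0$ at impacts and is smooth elsewhere, which is item~(2).

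Next I would linearize the flow. Integrating the $i=0,\ldots,n-2$ relations shows that the Abel map sends $\mu(t)$ to a straight line on the Jacobian torus $\Cset^{n-1}/\Lambda$ of the curve, so the motion is conditionally periodic; by the generalized Poncelet theorem (Theorem~\ref{thm:Poncelet}) all trajectories with the given caustics share the same frequencies. Over one oscillation of $\mu_j$ across its band each holomorphic differential contributes $\pm 2\int_{c_{2j}}^{c_{2j+1}} t^i/f(t)\,\rmd t$, the sign being $\varepsilon_j$ times the branch of $w$, which alternates from band to band and can be normalized to $(-1)^j$. Writing $m_j$ for the number of complete oscillations $\mu_j$ performs over one period (item~(4)), with $m_0$ the number of bounces since $\mu_0$ oscillates once per impact, the trajectory closes in elliptic coordinates exactly when the accumulated holomorphic periods cancel, namely
\[
\sum_{j=0}^{n-1} (-1)^j m_j \int_{c_{2j}}^{c_{2j+1}} \frac{t^i}{f(t)} \, \rmd t = 0, \qquad i = 0, \ldots, n-2,
\]
which is (\ref{eq:WindingNumbersIntegrals}). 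Since $\mu_j$ touches each band endpoint once per oscillation, the trajectory meets each of $Q_{c_{2j}}$ and $Q_{c_{2j+1}}$ exactly $m_j$ times, as claimed; the $i=n-1$ relation returns the common length $L_0$.

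It remains to prove the arithmetic constraints, which I expect to be the main obstacle. The parity claim --- that a band with an endpoint in $\{a_1,\ldots,a_n\}$ forces $m_j$ even --- I would derive from the symmetry under the reflections $x_l \mapsto -x_l$: crossing the value $a_l$ means the trajectory meets the hyperplane $H_l = Q_{a_l}$, and since a point and its mirror image carry identical elliptic coordinates (Theorem~\ref{thm:Jacobi} and the coordinate formulas), the orbit must cross $H_l$ an even number of times per period, forcing $m_j$ even. For item~(5) and the gcd, the cleanest route is to reconstruct Cartesian from elliptic coordinates via $x_j^2 = \prod_{i}(a_j-\mu_i)/\prod_{i\neq j}(a_j-a_i)$, which determines each $x_j$ only up to sign: over one elliptic period $L$ these signs either all return, giving $L_0=L$, or are permuted by an involution of order two, giving $L_0=2L$. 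Since $\mu_j$ makes $m_j$ oscillations over $L_0$ and hence $m_j/(L_0/L)$ over $L$, the integer ratio $L_0/L$ divides every $m_j$ and equals $\gcd(m_0,\ldots,m_{n-1})$, which therefore lies in $\{1,2\}$. Making the branch-of-$w$ bookkeeping across bands fully rigorous and pinning down when the factor two occurs is the delicate part; identifying the winding numbers $m_j$ with the homology class of the orbit on the real part of the Jacobian lets (\ref{eq:WindingNumbersIntegrals}), the parities, and the gcd all follow from the lattice structure.
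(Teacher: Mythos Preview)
Your proposal is correct and follows essentially the same route as the paper. In fact, the paper does not prove Theorem~\ref{thm:WindingNumbers} at all beyond two one-line remarks: most of the statement is attributed to~\cite{DragovicRadnovic2006,DragovicRadnovic2008}, the parity of $m_j$ when $\{c_{2j},c_{2j+1}\}$ contains an ellipsoidal parameter is justified exactly as you do (a periodic trajectory crosses any coordinate hyperplane an even number of times), and the claims about $\gcd(m_0,\ldots,m_{n-1})$ and item~(5) are dismissed with the observation that if the gcd is $2$ then each $\mu_j$ already completes an integer number of oscillations over $[0,L_0/2]$. Your sketch via separation of variables and the Abel map is the standard derivation behind the cited references, and your argument for $\gcd\in\{1,2\}$ through the $2^n$-fold sign ambiguity of the inverse elliptic-to-Cartesian map is in fact more explicit than what the paper offers.

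Two small points. First, a sign slip: on the band $(c_{2j},c_{2j+1})$ an \emph{odd} number of the factors $(\mu-c_i)$ is negative (for instance, on $(c_0,c_1)=(0,c_1)$ all $2n-1$ of them are), so the product is negative there and the radicand $-\prod_i(\mu-c_i)$ is positive, as you need. Second, your last paragraph shows $L_0/L\in\{1,2\}$ and $L_0/L$ divides every $m_j$, hence $L_0/L$ divides $\gcd(m_0,\ldots,m_{n-1})$; to get the equality $L_0/L=\gcd(m_0,\ldots,m_{n-1})$ asserted in item~(5) you also need that $\mu$ cannot close before all coordinates have completed an integer number of oscillations, which is exactly the statement that on the Jacobian the primitive lattice vector underlying the orbit has components $m_j/\gcd$. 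The paper does not spell this out either.
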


\begin{defi}
The numbers $m_0,\ldots,m_{n-1}$ are called \emph{winding numbers}.
Theorem~\ref{thm:WindingNumbers} contains three equivalent definitions for them:
by means of the property regarding hyperelliptic integrals given
in~(\ref{eq:WindingNumbersIntegrals}),
as a geometric description of how the periodic billiard trajectories
fold in $\Rset^n$,
and as the number of oscillations of the elliptic coordinates
along one period.
\end{defi}

Most of the statements of Theorem~\ref{thm:WindingNumbers}
can be found in~\cite{DragovicRadnovic2006,DragovicRadnovic2008},
but the one about the even character of some winding numbers and
the ones regarding $\gcd(m_0,\ldots,m_{n-1})$.
The first statement is trivial; suffice it to realize that
a periodic billiard trajectory can only have an even number of
crossings with any coordinate hyperplane.
The second ones follow from the oscillating behaviour of elliptic coordinates
along billiard trajectories described in Theorem~\ref{thm:WindingNumbers};
suffice it to note that all elliptic coordinates make an integer number
of complete oscillations inside their corresponding intervals along one
half-period $L_0/2$ when $\gcd(m_0,\ldots,m_{n-1}) = 2$.

The following conjecture was stated in~\cite{CasasRamirez2011},
where it was numerically tested.

\begin{con}\label{con:WindingNumbers}
Winding numbers are always ordered in a strict decreasing way;
that is,
\[
2 \leq m_{n-1} < \cdots < m_1 < m_0.
\]
\end{con}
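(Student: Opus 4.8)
The plan is to turn the winding-number relations into a statement about signed hyperelliptic periods and then to compare them interval by interval. Write $I_j=(c_{2j},c_{2j+1})$ for the $n$ intervals on which $f$ is real, and set
\[
P_{i,j}=\int_{c_{2j}}^{c_{2j+1}}\frac{t^{i}}{f(t)}\,\rmd t,\qquad i=0,\dots,n-2,\ j=0,\dots,n-1 .
\]
Since $0=c_0<c_1<\cdots$ and $f>0$ on each $I_j$, every entry $P_{i,j}$ is strictly positive. Relation~(\ref{eq:WindingNumbersIntegrals}) says exactly that the vector $v$ with $v_j=(-1)^j m_j$ lies in the kernel of the $(n-1)\times n$ matrix $P=(P_{i,j})$. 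Because the winding numbers are determined up to a common factor, $P$ has rank $n-1$, so its kernel is one-dimensional and Cramer's rule for a wide matrix gives $v_j=s\,(-1)^j\det P^{(j)}$, where $P^{(j)}$ is $P$ with its $j$-th column deleted and $s$ is a single scalar. Hence $m_j=s\,\det P^{(j)}$ for all $j$, and the whole conjecture reduces to showing that $\det P^{(j)}>0$ for every $j$ and that $j\mapsto\det P^{(j)}$ is strictly decreasing. The lower bound $m_{n-1}\ge 2$ is immediate and needs nothing of this: since $\lambda_{n-1}<a_n$, the largest parameter is $c_{2n-1}=a_n\in\{a_1,\dots,a_n\}$, so Theorem~\ref{thm:WindingNumbers} forces $m_{n-1}$ to be even, and a positive even integer is at least $2$.

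For positivity I would expand each minor by multilinearity in its columns (Andreief's identity). With $\mathcal J^{(j)}=\prod_{k\neq j}I_k$,
\[
\det P^{(j)}=\int_{\mathcal J^{(j)}}\det\bigl(s_k^{\,i}\bigr)_{0\le i\le n-2,\,k\neq j}\ \prod_{k\neq j}\frac{\rmd s_k}{f(s_k)} .
\]
The inner determinant is the Vandermonde $\prod_{k<l}(s_l-s_k)$ over the selected variables; since $s_k\in I_k$ and $I_0<I_1<\cdots$, every factor is positive, and $f>0$ on each $I_k$. Thus each minor is strictly positive, which pins down $s>0$ and confirms $m_j>0$.

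The strict ordering is the genuine difficulty. Comparing two neighbouring minors, I would freeze all integration variables $s_k$ with $k\notin\{j,j+1\}$ and isolate the single moving variable $u$, which runs over $I_{j+1}$ in $\det P^{(j)}$ and over $I_j$ in $\det P^{(j+1)}$. A short computation shows that in both minors the $u$-dependent part of the Vandermonde is the \emph{same} polynomial
\[
W(u)=\prod_{k=0}^{j-1}(u-s_k)\prod_{l=j+2}^{n-1}(s_l-u),
\]
which is positive on $I_j\cup I_{j+1}$, while the leftover Vandermonde factor $R(s)=\prod_{k<l,\,k,l\notin\{j,j+1\}}(s_l-s_k)>0$ and the measures are common to both. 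Subtracting,
\[
\det P^{(j)}-\det P^{(j+1)}=\int_{\mathcal J}R(s)\,\Delta_j(s)\prod_{k\neq j,j+1}\frac{\rmd s_k}{f(s_k)},\qquad \Delta_j(s)=\int_{I_{j+1}}\frac{W(u)}{f(u)}\,\rmd u-\int_{I_j}\frac{W(u)}{f(u)}\,\rmd u .
\]
Everything therefore comes down to proving that the period of $W/f$ over the higher interval $I_{j+1}$ dominates the one over the lower interval $I_j$, for all admissible positions of the frozen variables.

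This last inequality is where I expect to get stuck, and it is presumably why the statement is still only a conjecture. The weight $W/f$ is positive on both intervals but not monotone, and $f$ has inverse-square-root singularities at all four endpoints, so no crude bound will do. I would try three routes: (i) a change of variable carrying $I_j$ onto $I_{j+1}$ while tracking how $W/f$ transforms, aiming for a pointwise domination of the integrand after symmetrisation; (ii) reading the two integrals as real periods of the genus-$(n-1)$ curve $y^2=\prod_{i=1}^{2n-1}(t-c_i)$ and invoking monotonicity of the action frequencies of the Liouville-integrable billiard, so that the oscillation counts inherit their order from the geometric picture of Theorem~\ref{thm:WindingNumbers}; or (iii) an induction on $n$ in which one frozen variable is pushed to a branch point, degenerating the curve to a lower-dimensional instance. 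The dynamical route (ii) looks most promising, since the five oscillation properties in Theorem~\ref{thm:WindingNumbers} already express that the innermost coordinate $\mu_0$ is the fastest; turning that heuristic into a frequency inequality valid uniformly over all $2^{\,n-1}$ caustic types is exactly the open core of the problem.
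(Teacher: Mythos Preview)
The statement you are attacking is labelled \emph{Conjecture} in the paper, not Theorem; the paper offers no proof of it. It only remarks that the planar case $n=2$ is known, and later observes (end of Section~\ref{sec:General}) that Conjectures~\ref{con:Roots_of_S} and~\ref{con:tau} would together imply Conjecture~\ref{con:WindingNumbers}. So there is no ``paper's own proof'' to compare against, and your closing admission that the core inequality is where you ``expect to get stuck, and it is presumably why the statement is still only a conjecture'' is exactly right.

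That said, your partial work is sound and worth recording. The argument for $m_{n-1}\ge 2$ is correct: $c_{2n-1}=a_n$ because $\lambda_{n-1}<a_n$ by~(\ref{eq:ExistenceCondition}), hence $m_{n-1}$ is even by Theorem~\ref{thm:WindingNumbers}. The Andreief/Vandermonde expansion does prove $\det P^{(j)}>0$, which in turn shows $\Rank P=n-1$ (so your appeal to that rank is justified a posteriori, not circular) and gives $m_j>0$ with a common sign $s>0$. Your reduction of $\det P^{(j)}-\det P^{(j+1)}$ is also algebraically correct: freezing $s_k$ for $k\notin\{j,j+1\}$, the $u$-dependent Vandermonde factor is indeed the same polynomial $W(u)$ in both minors, positive on $I_j\cup I_{j+1}$.

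One caveat on the final step: you ask for $\Delta_j(s)>0$ for \emph{every} admissible configuration of the frozen variables. That pointwise inequality is sufficient but may be strictly stronger than what you need, since only the integral $\int R(s)\Delta_j(s)\prod \rmd s_k/f(s_k)$ has to be positive and $R(s)>0$. If the pointwise version fails for some configurations, routes (i) and (iii) would be dead ends, while route (ii) through the action--angle frequencies of the integrable billiard would still be viable because it targets the averaged quantity directly. That is precisely the heuristic the paper leans on when it ties Conjecture~\ref{con:WindingNumbers} to the signature picture of Conjecture~\ref{con:tau}.
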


It is known that the conjecture holds in the planar case.
If this conjecture holds, then any nonsingular periodic billiard trajectory
inside $Q$ has period at least $n+1$.
By the way, there are periodic billiard trajectories of smaller periods,
but all of them are singular
---they are contained in some coordinate hyperplane or in some
ruled quadric of the confocal family.

In light of the last item of Theorem~\ref{thm:WindingNumbers},
we present the following definitions.

\begin{defi}\label{defi:EllipticPeriod}
The \emph{elliptic period} $m$ and the \emph{elliptic winding numbers}
$\widetilde{m}_0,\ldots,\widetilde{m}_{n-1}$
of the nonsingular periodic billiard trajectories with period $m_0$
and winding numbers $m_0,\ldots,m_{n-1}$ are
\[
m = m_0/d, \qquad  
\widetilde{m}_j = m_j/d,
\]
where $d = \gcd(m_0,\ldots,m_{n-1})$.
\end{defi}

Roughly speaking,
the difference between the period $m_0$ and the elliptic period $m$
is that periodic billiard trajectories close in Cartesian (respectively,
elliptic) coordinates after exactly $m_0$ (respectively, $m$) bounces.
In order to clarify this difference,
let us consider the six planar periodic trajectories shown
in Figure~\ref{fig:Planar}; see Section~\ref{sec:Planar}.
Only the trajectory in Figure~\ref{fig:Planar:c} verifies that $m = m_0$.
On the contrary, the trajectories in figures~\ref{fig:Planar:a},
\ref{fig:Planar:b}, and~\ref{fig:Planar:e}
(respectively, Figure~\ref{fig:Planar:d})
(respectively,  Figure~\ref{fig:Planar:f}) have even period $m_0$
and any of their impact points becomes its reflection with respect to
the origin (respectively, the vertical axis)
(respectively, the horizontal axis) after $m_0/2$ bounces,
so they have elliptic period $m = m_0/2$.

It turns out that given any ellipsoid of the form~(\ref{eq:Ellipsoid})
and any proper coordinate subspace of $\Rset^n$,
there exist infinitely many sets of $n-1$ distinct nonsingular caustics
such that their tangent trajectories are periodic with even period,
say $m_0$, and any of their impact points becomes its reflection
with respect to that coordinate subspace after $m_0/2$ bounces.
We will not prove this claim,
since the proof requires some convoluted ideas
developed in~\cite{CasasRamirez2011,CasasRamirez2012}.

It is natural to look for caustics giving rise to
periodic billiard trajectories inside that ellipsoid.
Such caustics can be found by means of certain algebraic conditions,
called \emph{generalized Cayley conditions}.
They are found by working in elliptic coordinates,
so they depend on the elliptic period $m$, not on the (Cartesian) period $m_0$.

\begin{thm}[Generalized Cayley conditions]
\label{thm:Cayley}
The nonsingular billiard trajectories inside the ellipsoid $Q$
sharing the caustics $Q_{\lambda_1},\ldots,Q_{\lambda_{n-1}}$
are periodic with elliptic period $m$ if and only if
$m \ge n$ and
\[
\Rank
\left(
\begin{array}{ccc}
f_{m + 1}  & \cdots & f_{n + 1} \\
\vdots   &        & \vdots \\
f_{2m - 1} & \cdots & f_{m + n -1}
\end{array}
\right) < m - n + 1,
\]
where $f(t) = \sum_{l \ge 0} f_l t^l := \sqrt{\prod_{i=1}^{2n-1}(1-t/c_i)}$.
\end{thm}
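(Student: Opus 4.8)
The plan is to derive the rank condition from the hyperelliptic-integral characterization of periodicity already supplied by Theorem~\ref{thm:WindingNumbers}, and then to convert it into a Cayley-type linear-algebra statement. First I would recast that characterization in terms of the elliptic period. Since the relations~(\ref{eq:WindingNumbersIntegrals}) are homogeneous of degree one in the winding numbers, dividing them by $d=\gcd(m_0,\ldots,m_{n-1})$ and invoking Definition~\ref{defi:EllipticPeriod} shows that the trajectories close with elliptic period $m$ if and only if there exist coprime positive integers $\widetilde m_0,\ldots,\widetilde m_{n-1}$, with $\widetilde m_0=m$, such that
\[
\sum_{j=0}^{n-1}(-1)^j\,\widetilde m_j\int_{c_{2j}}^{c_{2j+1}}\frac{t^i}{f(t)}\,\rmd t=0,\qquad i=0,\ldots,n-2 .
\]
Thus it suffices to translate these $n-1$ scalar relations into the displayed rank condition.

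Next I would pass to the hyperelliptic curve $\Gamma:\ y^2=\prod_{i=1}^{2n-1}(1-t/c_i)$, whose branch points are $c_1,\ldots,c_{2n-1}$ together with a single point $P_\infty$ at infinity (the degree $2n-1$ is odd), so that $\Gamma$ has genus $g=n-1$. A local computation shows that $t^i\,\rmd t/f(t)$, for $i=0,\ldots,n-2$, is holomorphic exactly in this range and hence forms a basis of holomorphic differentials, while the real intervals $[c_{2j},c_{2j+1}]$ lift to a natural family of cycles. Consequently the relations above say that a fixed integer combination of these cycles is annihilated by every holomorphic differential; by Abel's theorem, and after matching the real integration paths with the reality structure of $\Gamma$, this is equivalent to the principality of the degree-zero divisor $m\,(P^{+}-P^{-})$, where $P^{\pm}=(0,\pm1)$ are the two points of $\Gamma$ over $t=0$. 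This is the expected geometric content: $P^{+}-P^{-}$ is the shift that one reflection induces on $\mathrm{Jac}(\Gamma)$, so closing up after $m$ elliptic bounces amounts to $m$ times this shift being trivial.

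It then remains to carry out the Cayley-type step. Using $P^{+}+P^{-}\sim 2P_\infty$ one gets $m(P^{+}-P^{-})\sim 2m\,P^{+}-2m\,P_\infty$, so principality is equivalent to the existence of a nonzero function $R$ with pole divisor $\le 2m\,P_\infty$ vanishing to order $2m$ at $P^{+}$. Every such $R$ is of the form $R=p(t)\,y-q(t)$ with $\deg q\le m$ and $\deg p\le m-n$, the bound $\deg p\ge 0$ being precisely where $m\ge n$ enters. Expanding $y=f(t)=\sum_{l\ge0}f_l t^l$ on the sheet through $P^{+}$, the coefficients of $t^0,\ldots,t^m$ can be absorbed by $q$, and the vanishing of order $2m$ reduces to
\[
\sum_{k=0}^{m-n}p_k\,f_{\,l-k}=0,\qquad l=m+1,\ldots,2m-1 ,
\]
for a nonzero vector $(p_0,\ldots,p_{m-n})$. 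The coefficient matrix of this linear system is exactly the $(m-1)\times(m-n+1)$ matrix of the statement, and a nontrivial solution exists if and only if its rank is smaller than $m-n+1$.

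The hard part will be the middle step: rigorously identifying the billiard shift on $\mathrm{Jac}(\Gamma)$ with the class of $P^{+}-P^{-}$ and showing that the \emph{real} closure relations of Theorem~\ref{thm:WindingNumbers}, taken with the coprime elliptic winding numbers $\widetilde m_j$, are genuinely equivalent to principality of $m(P^{+}-P^{-})$. This forces one to control the period lattice and the reality structure of $\Gamma$ with care, so that the vanishing of finitely many real integrals captures the torsion condition on the Jacobian without spurious contributions from the remaining cycles. Once that identification is secured, the reduction to the Hankel-type rank condition is routine bookkeeping on the Taylor coefficients of $f$, and the constraint $m\ge n$ emerges automatically from the degree count for $p$.
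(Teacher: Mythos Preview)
The paper does not give its own proof of this theorem: immediately after stating it, the author writes that Cayley proved it for conics and that Dragovi\'{c} and Radnovi\'{c} generalized it to any dimension, citing \cite{Cayley1854,DragovicRadnovic1998a,DragovicRadnovic1998b}. So there is nothing to compare against in the paper itself; Theorem~\ref{thm:Cayley} is quoted as background, and the paper's contributions begin only with Theorem~\ref{thm:Polynomial}.

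That said, your outline is essentially the approach of the cited references: interpret the periodicity relations~(\ref{eq:WindingNumbersIntegrals}) as a torsion condition on the Jacobian of the hyperelliptic curve $y^2=\prod_i(1-t/c_i)$, and then reduce principality of $m(P^+-P^-)$ to the existence of a function $p(t)y-q(t)$ with prescribed poles and zeros, which yields the Hankel-type rank condition on the Taylor coefficients of $f$. You have correctly located the genuine difficulty. The linear-algebra bookkeeping at the end is clean, and the degree count $\deg p\le m-n$ indeed forces $m\ge n$. The delicate point is exactly the one you flag: the relations in Theorem~\ref{thm:WindingNumbers} involve only \emph{real} integrals over the $n$ ovals $[c_{2j},c_{2j+1}]$, whereas Abel's criterion requires vanishing over a full basis of $H_1(\Gamma,\Zset)$; one has to use the reality of the branch points and the hyperelliptic involution to show that these real cycles already span the relevant lattice (up to the known $2$-torsion coming from Weierstrass points), and that the coprimality of the $\widetilde m_j$ matches the minimality of $m$. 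If you want to fill this in, the cleanest route is the one in \cite{DragovicRadnovic1998b,DragovicRadnovic2006}: work directly with the Abel--Jacobi map on the real part of $\Gamma$ and identify the billiard map with addition of a fixed point on the (real) Jacobian, rather than trying to deduce everything from the integral identities alone.
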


Cayley proved this theorem for conics~\cite{Cayley1854}.
Later on, this result was generalized to any dimension by Dragovi\'{c}
and Radnovi\'{c} in~\cite{DragovicRadnovic1998a,DragovicRadnovic1998b}.
These authors have also given similar Cayley conditions in many other billiard
frameworks; see~\cite{DragovicRadnovic2004,DragovicRadnovic2006,DragovicRadnovic2006b,
DragovicRadnovic2008,DragovicRadnovic2012}.

\begin{defi}\label{defi:C_m_n}
$\mathcal{C}(m,n)$ denotes the generalized Cayley condition that characterizes
billiard trajectories of elliptic period $m$ inside ellipsoids of $\Rset^n$
given in Theorem~\ref{thm:Cayley}.
\end{defi}

\section{On the matrix formulation of the generalized Cayley conditions}
\label{sec:Practical}

The matrix formulation of the generalized Cayley condition $\mathcal{C}(m,n)$
stated in Theorem~\ref{thm:Cayley} is very nice from a theoretical point of view,
but has strong limitations from a practical point of view.
Let us describe them.

The function $f(t)$ is symmetric in the inverse quantities $\gamma_i = 1/c_i$.
In order to exploit it, we introduce some notations about symmetric polynomials.
Let $\Qset^\HS_l[x_1,\ldots,x_s]$ be the vectorial space over $\Qset$ of
all homogeneous symmetric polynomials with rational coefficients of degree $l$
in the variables $x_1,\ldots,x_s$.
Let $\rme_l(x_1,\ldots,x_s)$ be the \emph{elementary symmetric polynomial}
of degree $l$ in the variables $x_1,\ldots,x_s$. That is,
$\prod_{i=1}^s(1+x_i t) = \sum_{l \ge 0} \rme_l(x_1,\ldots,x_s) t^l$,
so $\rme_l(x_1,\ldots,x_s) = 0$ for all $l > s$.
Clearly, $\rme_l = \rme_l(x_1,\ldots,x_s) \in \Qset^\HS_l[x_1,\ldots,x_s]$. 

We stress that $f_l = f_l(\gamma_1,\ldots,\gamma_{2n-1}) \in
\Qset^\HS_l[\gamma_1,\ldots,\gamma_{2n-1}]$,
which is one of the reasons for the introduction of the
inverse quantities $\gamma_i = 1/c_i$.
Indeed, using that $f^2(t) = \prod_{i=1}^{2n-1}(1-\gamma_i t)$,
we get the recursive relations
\[
f_0 = 1,\qquad
2 f_l =
(-1)^l \rme_l(\gamma_1,\ldots,\gamma_{2n-1}) -
\sum_{k=1}^{l-1} f_k f_{l-k}, \quad
\forall l \ge 1.
\]
Hence, it is possible to compute recursively all Taylor coefficients $f_l$,
although their expressions are rather complicated when $l$ is big.
Nevertheless, the computation of the Taylor coefficients
$f_{n+1},\ldots,f_{2m-1}$ is the simplest step in the practical
implementation of the generalized Cayley condition $\mathcal{C}(m,n)$.
Next, we must impose that all $(m-n+1) \times (m-n+1)$ minors
of the matrix that appear in Theorem~\ref{thm:Cayley} vanish.
For simplicity, let us consider the minors formed by
the first $m-n$ rows and the $(m-n+l)$-th row of that matrix, 
for $l = 1,\ldots, n-1$.
Then the Cayley condition $\mathcal{C}(m,n)$ can be written
as the system of $n-1$ polynomial equations
\begin{equation}\label{eq:PolynomialSystem}
M_{m,n,l} = M_{m,n,l}(\gamma_1,\ldots,\gamma_{2n-1}) :=
\left |
\begin{array}{ccc}
f_{m + 1}  & \cdots & f_{n + 1} \\
\vdots   &        & \vdots \\
f_{2m - n} & \cdots & f_m \\
f_{2m - n + l} & \cdots & f_{m + l}
\end{array}
\right| = 0,\quad 1 \le l \le n-1.
\end{equation}
One can check that $M_{m,n,l}(\gamma_1,\ldots,\gamma_{2n-1})
 \in \Qset^\HS_{(m-n+2)m-n+l} [\gamma_1,\ldots,\gamma_{2n-1}]$
from the Leibniz formula for determinants.
This implies that the resolution of system~(\ref{eq:PolynomialSystem})
is a formidable challenge,
even from a purely numerical point of view and for
relatively small values of $m$.

We want to write down the solutions of system~(\ref{eq:PolynomialSystem})
in an explicit algebraic way.
Let us focus on the planar case $n = 2$,
when condition $\mathcal{C}(m,2)$ becomes a single homogeneous symmetric
polynomial equation of degree $m^2-1$ in three unknowns; namely,
\[
M_m = M_m(\gamma_1,\gamma_2,\gamma_3) :=
\left |
\begin{array}{ccc}
f_{m + 1}  & \cdots & f_3 \\
\vdots   &        & \vdots \\
f_{2m - 1} & \cdots & f_{m + 1}
\end{array}
\right| = 0.
\]
For instance, condition $\mathcal{C}(2,2)$ can be easily solved, since
\begin{eqnarray*}
-16 M_2 & = &
\gamma^3_1 + \gamma^3_2 + \gamma^3_3 - \gamma^2_1 \gamma_2 -
\gamma^2_1 \gamma_3 - \gamma^2_2 \gamma_1 - \gamma^2_2 \gamma_3 -
\gamma^2_3 \gamma_1 - \gamma^2_3 \gamma_2 + 2 \gamma_1 \gamma_2 \gamma_3  \\
& = &
(\gamma_1 - \gamma_2 - \gamma_3)
(\gamma_3 - \gamma_1 - \gamma_2)
(\gamma_2 - \gamma_3 - \gamma_1).
\end{eqnarray*}
The inverse quantities $\gamma_i = 1/c_i$
verify that $0 < \gamma_3 < \gamma_2 < \gamma_1$,
since $0 < c_1 < c_2 < c_3$.
Therefore, only the first factor of the above formula provides
a feasible solution,
and so condition $\mathcal{C}(2,2)$ has a unique solution:
\[
\gamma_1 = \gamma_2 + \gamma_3.
\]

The computations for condition $\mathcal{C}(3,2)$ are much harder,
so we have implemented them using a computer algebra system.
We got the factorization $-16384 M_3 = q_0 q_1 q_2 q_3$, where
\begin{eqnarray*}
q_0 & = &
\gamma_1^2 + \gamma_2^2 + \gamma_3^2 -
2 \gamma_1 \gamma_2 - 2 \gamma_1 \gamma_3 - 2\gamma_2 \gamma_3,\\
q_k & = &
3 \gamma^2_k - 2(\gamma_i + \gamma_j) \gamma_k - (\gamma_i  - \gamma_j)^2,\qquad
\{i,j,k\} = \{1,2,3\}.
\end{eqnarray*}
The first factor $q_0$ can, in its turn, be factored as
\[
q_0 =
(\sqrt{\gamma_1} - \sqrt{\gamma_2} - \sqrt{\gamma_3})
(\sqrt{\gamma_1} + \sqrt{\gamma_2} - \sqrt{\gamma_3})
(\sqrt{\gamma_1} - \sqrt{\gamma_2} + \sqrt{\gamma_3})
(\sqrt{\gamma_1} + \sqrt{\gamma_2} + \sqrt{\gamma_3}).
\]
The factor $q_0$ provides a unique feasible solution:
$\sqrt{\gamma_1} = \sqrt{\gamma_2} + \sqrt{\gamma_3}$,
because $0 < \gamma_3 < \gamma_2 < \gamma_1$.
Next, we consider the factor $q_k$ as a second-order polynomial in the variable
$\gamma_k$ with coefficients in $\Zset^\Symmetric[\gamma_i,\gamma_j]$.
Then we get the solutions
\[
\gamma_k = \gamma^\pm_k (\gamma_i,\gamma_j) :=
\frac{\gamma_i + \gamma_j}{3} \pm
\frac{2}{3} \sqrt{\gamma^2_i + \gamma^2_j - \gamma_i \gamma_j}.
\]
It turns out that $\gamma^-_k \le 0 < \max(\gamma_i,\gamma_j) < \gamma^+_k$,
so only the factor $q_1$ gives a solution compatible with the ordering
$0 < \gamma_3 < \gamma_2 < \gamma_1$;
namely, $\gamma_1 = \gamma^+_1(\gamma_2,\gamma_3)$.
Hence, $\mathcal{C}(3,2)$ has only two solutions:
\begin{equation}\label{eq:C32}
\sqrt{\gamma_1} = \sqrt{\gamma_2} + \sqrt{\gamma_3} \qquad \mbox{and} \qquad
3\gamma_1 =
\gamma_2 + \gamma_3 + 2\sqrt{\gamma^2_2 + \gamma^2_3 - \gamma_2 \gamma_3}.
\end{equation}

We have tried to write down explicitly the solutions of
system~(\ref{eq:PolynomialSystem}) in other cases,
but we did not succed,
even after implementing the computations in a computer algebra system.
This shows the limitations of the matrix formulation.

\section{A polynomial formulation of the generalized Cayley conditions}
\label{sec:Polynomial}

\begin{thm}\label{thm:Polynomial}
Let $r(t) = \prod_{i=1}^{2n-1} (1-t/c_i)$ and $f(t) = \sqrt{r(t)}$.
The generalized Cayley condition $\mathcal{C}(m,n)$
is equivalent to any of the following two conditions:
\begin{enumerate}[(i)]
\item\label{item:Polynomial-first}
There exists a non-zero polynomial $s(t) \in \Rset_{m-n}[t]$ such that
\begin{equation}\label{eq:sf}
\left.\frac{\rmd^l}{\rmd t^l}\right|_{t=0}
\left\{ s(t) f(t) \right\} = 0,\qquad
l = m+1,\ldots,2m-1.
\end{equation}
\item\label{item:Polynomial-second}
There exist $\alpha \neq 0$, $s(t) \in \Rset_{m-n}[t]$,
and $q(t) \in \Rset_{m-1}[t]$ such that $s(0) = q(0) = 1$ and
\begin{equation}\label{eq:sq}
s^2(t) r(t) = (\alpha t^m + q(t)) q(t).
\end{equation}
\end{enumerate}
\end{thm}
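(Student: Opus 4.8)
The plan is to prove that $\mathcal{C}(m,n)$ is equivalent to condition (i) and that condition (i) is equivalent to condition (ii). The first equivalence is pure linear algebra. The matrix in Theorem~\ref{thm:Cayley} is $(m-1)\times(m-n+1)$, and its entry in row $r$ and column $c$ equals $f_{m+r-c}$, for $r=1,\ldots,m-1$ and $c=0,\ldots,m-n$. Hence its rank is smaller than the number of columns $m-n+1$ exactly when the columns are linearly dependent, that is, when there exist $\sigma_0,\ldots,\sigma_{m-n}$, not all zero, with $\sum_{c=0}^{m-n}\sigma_c f_{m+r-c}=0$ for every $r=1,\ldots,m-1$. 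Setting $s(t)=\sum_{c=0}^{m-n}\sigma_c t^c\in\Rset_{m-n}[t]$, each such sum is precisely the coefficient of $t^{m+r}$ in $s(t)f(t)$, so the dependence relations are exactly the vanishing conditions~(\ref{eq:sf}) for $l=m+1,\ldots,2m-1$.

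For (i) $\Leftrightarrow$ (ii) I would work with the power series $g(t)=s(t)f(t)$, whose square $g^2=s^2 r$ is a genuine polynomial of degree at most $2m-1$. Assuming (ii), I take the square root with $g(0)=s(0)=1$ to get $g=q\sqrt{1+\alpha t^m/q}=q+\frac{\alpha}{2}t^m+\Order(t^{2m})$; since $\deg q\le m-1$, the coefficients of $t^{m+1},\ldots,t^{2m-1}$ in $g$ vanish, which is (i). Conversely, assuming (i), I let $q$ be the truncation of $g$ to degree $\le m-1$ and set $\alpha=2g_m$. The gap $g_{m+1}=\cdots=g_{2m-1}=0$ shows that $g^2$ and $q(\alpha t^m+q)$ agree modulo $t^{2m}$, and since both are polynomials of degree at most $2m-1$ they coincide, giving~(\ref{eq:sq}). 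That $\alpha\neq 0$ follows from a parity argument: if $\alpha=0$ then $s^2 r=q^2$, but $\deg(s^2 r)=2\deg s+(2n-1)$ is odd while $\deg q^2$ is even, a contradiction. The odd degree $2n-1$ of $r$ is the structural feature that makes this work.

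The delicate point, and the step I expect to be the main obstacle, is the normalization $s(0)=q(0)=1$. Since $q(0)=s(0)f(0)=s(0)$, the construction above yields $q(0)=1$ only when $s(0)\neq 0$, so everything reduces to the lemma that \emph{every} nonzero solution $s$ of (i) already satisfies $s(0)\neq 0$; granting it, one rescales $s$ and the equivalence closes. To prove the lemma I would argue by contradiction: suppose $s$ vanishes to order $k\ge 1$ at the origin, write $s=t^k\bar s$ with $\bar s(0)\neq 0$, and set $\bar g=\bar s f$, so that (i) forces $\bar g_j=0$ for $j=m+1-k,\ldots,2m-1-k$. Since $\bar g^2=\bar s^2 r$ has degree at most $2m-2k-1$, the coefficient of $t^{2m-2k}$ in $\bar g^2$ is zero, and the zero block just found eliminates every cross term, leaving only the diagonal term $\bar g_{m-k}^2$; hence $\bar g_{m-k}=0$ and the vanishing extends to all $j=m-k,\ldots,2m-2k-1$. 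Truncating $\bar g$ at degree $m-k-1$ and repeating the comparison modulo $t^{2(m-k)}$ then forces $\bar s^2 r=Q^2$ for some polynomial $Q$, which is again impossible by parity, since $\deg(\bar s^2 r)=2\deg\bar s+(2n-1)$ is odd. This contradiction establishes the lemma and completes the proof.
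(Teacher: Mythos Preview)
Your proof is correct and follows the same overall architecture as the paper: the linear-algebra reformulation of the rank condition for Step~1, and the power-series manipulation $g=sf$, $g^2=s^2r$ for the equivalence of (i) and (ii), including the parity argument for $\alpha\neq 0$.

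The one place where you and the paper genuinely diverge is the proof that $s(0)\neq 0$. You argue by contradiction: factor out $t^k$, use a coefficient identity at degree $2m-2k$ to extend the zero block by one term, and then force $\bar s^2 r=Q^2$, which is killed by parity. This is correct (the key inequality $2m-2k\le 2m-1-k$ holds precisely because $k\ge 1$, so the cross terms really do vanish). The paper does the same coefficient trick but more directly and without a contradiction hypothesis: since $h=g^2=s^2r$ has degree $\le 2m-1$, one has $0=h_{2m}=\sum_{l=0}^{2m}g_lg_{2m-l}=g_m^2+2g_0g_{2m}$ (all other cross terms die by the gap $g_{m+1}=\cdots=g_{2m-1}=0$), whence $g_0g_{2m}=-\alpha^2/8\neq 0$ and $s(0)=g_0\neq 0$ in one line. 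Your route buys nothing extra here and is strictly longer; it is worth knowing the paper's shortcut.
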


\begin{proof}
We split the proof in three steps.

\textit{Step 1: $\mathcal{C}(m,n) \Leftrightarrow$ (\ref{item:Polynomial-first})}.
$\mathcal{C}(m,n)$ means that the $m-n+1$ columns of the matrix
given in Theorem~\ref{thm:Cayley} are linearly dependent,
so there exist $s_0,\ldots,s_{m-n} \in \Rset$, not all zero, such that
\[
s_0 \times (\mbox{first column}) + \cdots +
s_{m-n} \times (\mbox{last column}) = 0,
\]
which is equivalent to condition~(\ref{eq:sf}) when
$s(t) = \sum_{l=0}^{m-n} s_l t^l \in \Rset_{m-n}[t]$.

\textit{Step 2: (\ref{item:Polynomial-first}) $\Rightarrow$ (\ref{item:Polynomial-second})}.
If $s(t) \in \Rset_{m-n}[t]$ verifies~(\ref{eq:sf}),
then $g(t) = \sum_{l \ge 0} g_l t^l := s(t) f(t)$
verifies that $g_l = 0$ for $l = m+1,\ldots,2m-1$.
Hence,
\[
g(t) = q(t) + \alpha t^m/2 + \Order(t^{2m}),
\]
where $q(t) = g_0 + \cdots + g_{m-1} t^{m-1} \in \Rset_{m-1}[t]$
and $\alpha = 2 g_m$.
Therefore,
\[
s^2 r = s^2 f^2 = g^2 = q^2 + \alpha t^m q + \Order(t^{2m}) =
(q + \alpha t^m) q + \Order(t^{2m}),
\]
and so $s^2 r = (q + \alpha t^m) q$,
since $\deg[s^2 r] \le 2m-1$ and $\deg[(q + \alpha t^m) q] \le 2m-1$.
Besides, $\alpha \neq 0$, because $\deg[r]$ is odd and $s(t) \not \equiv 0$.

Let $h(t) = \sum_{l \ge 0} h_l t^l := g^2(t) =
s^2(t) r(t) \in \Rset_{2m-1}[t]$.
Then
\[
0 = h_{2m} = \sum_{l=0}^{2m} g_l g_{2m-l} = (g_m)^2 + 2 g_0 g_{2m}
\Rightarrow g_0 g_{2m} = -\alpha^2/8 \neq 0.
\]
From this property, we deduce that
$q(0) = g_0 \neq 0$ and $s^2(0) = q^2(0)/r(0) \neq 0$.
Thus, we can normalize $s(t)$ by imposing $s(0) = 1$,
since condition~(\ref{eq:sf}) only determines $s(t)$
up to a multiplicative constant.
This implies that $q^2(0) = s^2(0) r(0) = 1$, so $q(0) = \pm 1$.
We can assume, without loss of generality, that $q(0) = 1$.
On the contrary,
we substitute $q(t)$ and $\alpha$ in the identity $s^2 r = (\alpha t^m + q) q$,
by $-q(t)$ and $-\alpha$, respectively.

\textit{Step 3: (\ref{item:Polynomial-second}) $\Rightarrow$ (\ref{item:Polynomial-first})}.
If there exist $\alpha \neq 0$,
$s(t) \in \Rset_{m-n}[t]$, and $q(t) \in \Rset_{m-1}[t]$
such that $s(0) = q(0) = 1$ and relation~(\ref{eq:sq}) holds,
we set $g(t) = \sum_{l \ge 0} g_l t^l := s(t) f(t)$.
Then,
\[
g^2 = s^2 f^2 = s^2 r = (q + \alpha t^m) q = (1+ \alpha t^m/q) q^2.
\]
The last operation is well-defined for small values of $|t|$,
because $q(0) \neq 0$.
Hence,
\[
g = \pm q \sqrt{1 + \frac{\alpha t^m}{q}} =
\pm q \left( 1 + \frac{\alpha t^m}{2q} + \Order(t^{2m}) \right) =
\pm q \pm \frac{\alpha t^m}{2} + \Order(t^{2m}),
\]
so $g_l = 0$ for $l = m+1,\ldots,2m-1$.
\end{proof}

Next, we present three examples of the results
that can be obtained from this formulation.

\begin{thm}\label{thm:Samples}
The nonsingular billiard trajectories inside the ellipsoid~(\ref{eq:Ellipsoid})
sharing the caustics $Q_{\lambda_1},\ldots,Q_{\lambda_{n-1}}$ are periodic with:
\begin{itemize}
\item
Elliptic period $m = n$ if the roots of $t^n - \prod_{j=1}^n (t-a_j)$ are
the caustic parameters;
\item
Elliptic period $m = n+1$ if there exists $d \in \Rset$ such that
the roots of $t^{n+1} - (t-d)^2 \prod_{k=1}^{n-1} (t-\lambda_k)$ are
the ellipsoidal parameters; and
\item
Elliptic period $m = 2n-1$ if all the roots of
$t^{2n-1} - \prod_{j=1}^n (t-a_j) \prod_{k=1}^{n-1}(t-\lambda_k)$ are double.
\end{itemize}
\end{thm}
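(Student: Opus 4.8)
The plan is to verify, in each of the three cases, condition~(\ref{item:Polynomial-first}) of Theorem~\ref{thm:Polynomial} for the relevant value of $m$; by Theorem~\ref{thm:Cayley} this is precisely the assertion that the trajectories are periodic with the stated elliptic period. Throughout I would write $\Lambda(t) = \prod_{k=1}^{n-1}(t-\lambda_k)$ and $P(t) = \prod_{j=1}^n(t-a_j)\,\Lambda(t)$, so that $r(t) = -P(t)/\prod_i c_i$ and $r(0)=1$, where $\prod_i c_i := \prod_{i=1}^{2n-1}c_i>0$.

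First I would translate each hypothesis into a factored expression for $r$ by substituting the stated polynomial identity into $P$. In Case~1 the relation $t^n-\prod_j(t-a_j) = (\sum_j a_j)\,\Lambda(t)$ (its leading coefficient being $\sum_j a_j$, and $\Lambda$ monic) gives $r\prod_i c_i = (\sum_j a_j)\Lambda^2 - t^n\Lambda$. Likewise Case~2 gives $r\,B\prod_i c_i = (t-d)^2\Lambda^2 - t^{n+1}\Lambda$ with $B=\sum_k\lambda_k+2d$, and Case~3 (all roots double, so $t^{2n-1}-P = C\,u(t)^2$ with $\deg u = n-1$ and $C>0$) gives $r\prod_i c_i = C\,u^2 - t^{2n-1}$. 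In every case $r$ then has the common shape $\omega(t)^2\bigl(1-t^m\sigma(t)\bigr)$, where $\omega$ is a constant multiple of a polynomial (namely $\Lambda$, $(t-d)\Lambda$, and $u$, respectively) and $\sigma$ is regular at $t=0$, using $\Lambda(0)\neq0$, $d\neq0$, and $u(0)\neq0$, which I would deduce from the hypotheses.

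Next I would expand the square root. Taking $s=1$, $s=\beta(t-d)$, and $s=\beta u$ in the three cases --- each of degree exactly $m-n$, as required --- and writing $f=\omega\sqrt{1-t^m\sigma}=\omega\bigl(1-\tfrac12 t^m\sigma+\Order(t^{2m})\bigr)$, the point is that the chosen $s$ makes both $s\omega$ and $s\omega\sigma$ genuine polynomials of degree $\le m$ (the apparent denominators $\Lambda$ and $u$ cancel), while $s\omega\sigma^2$ stays regular at $0$. Hence $s(t)f(t)$ equals a polynomial of degree $\le m$ plus a remainder of order $\Order(t^{2m})$, so all its Taylor coefficients of orders $m+1,\dots,2m-1$ vanish, which is exactly~(\ref{eq:sf}). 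The branch of the square root is pinned down by $f(0)=1$, and the normalization $s(0)=1$ together with the compatibility $q(0)^2=r(0)=1$ follow from the numerical identities that the hypotheses force upon evaluation of the factored relations at $t=0$; for instance $(\sum_j a_j)\prod_k\lambda_k=\prod_j a_j$ in Case~1 and $d^2\Lambda(0)^2=B\prod_i c_i$ in Case~2.

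The routine but error-prone part, and the main obstacle, is the degree and branch bookkeeping in Case~2, where the polynomial part of $sf$ has degree \emph{exactly} $m=n+1$ (so the associated $q$ is a truncation rather than a clean factor) and where one must extract $d\neq0$ and $B>0$ from the hypothesis before the square root and the normalization are meaningful: $d=0$ would force $B=\sum_k\lambda_k=0$, impossible, and then $B>0$ drops out of $d^2\Lambda(0)^2=B\prod_i c_i$. The analogous positivity $C>0$ in Case~3 comes from matching the leading coefficients in $t^{2n-1}-P=C\,u^2$. Once these nonvanishing and sign checks are in place, the three verifications proceed uniformly from the single expansion above.
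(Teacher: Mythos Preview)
Your proof is correct and follows essentially the same route as the paper: both arguments invoke Theorem~\ref{thm:Polynomial} with the very same auxiliary polynomial $s(t)$ in each case (namely $1$, a multiple of $(t-d)$, and a multiple of $u(t)$). The only difference is which of the two equivalent criteria in that theorem is verified. The paper checks condition~(\ref{item:Polynomial-second}) directly, exhibiting explicit $\alpha$, $s$, $q$ and confirming the polynomial identity $s^2 r=(\alpha t^m+q)q$ by a one-line computation per case; you instead verify condition~(\ref{item:Polynomial-first}) by writing $r=\omega^2(1-t^m\sigma)$ and expanding $sf=s\omega\sqrt{1-t^m\sigma}$. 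Your route is a legitimate alternative, but it carries some analytic overhead (positivity of the constants under the radical so that $\omega$ is real, branch of the square root, regularity of $\sigma$ at $0$, the side arguments for $d\neq0$ and $B>0$) that the paper's purely algebraic verification sidesteps entirely.

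One small imprecision worth tightening: the conclusion ``polynomial of degree $\le m$ plus $\Order(t^{2m})$'' does not follow from $s\omega\sigma$ merely being a polynomial of degree $\le m$; what you actually need, and what holds in all three cases, is that $s\omega\sigma$ is a \emph{constant} (the factor $\Lambda$, $(t-d)^2\Lambda$, or $u^2$ in the denominator of $\sigma$ cancels exactly against $s\omega$), so that $t^m s\omega\sigma$ contributes only at degree $m$.
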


\begin{proof}
Suffice it to find $\alpha \neq 0$, $s(t) \in \Rset_{m-n}[t]$,
and $q(t) \in \Rset_{m-1}[t]$ such that
\[
s^2(t) r(t) = (\alpha t^m + q(t)) q(t),\qquad
s(0) = q(0) = 1,
\]
for $m=n$, $m=n+1$, and $m=2n-1$, respectively;
see Theorem~\ref{thm:Polynomial}.
We recall that $r(t) = \prod_{i=1}^{2n-1} (1-t/c_i)$ with
$\{c_1,\ldots,c_{2n-1}\} =
 \{ a_1,\ldots,a_n \} \cup \{ \lambda_1,\ldots,\lambda_{n-1}\}$.

\textit{Case $m=n$}.
If the caustic parameters are the roots of $t^n - \prod_{j=1}^n (t-a_j)$,
then
\[
\textstyle
t^n - \prod_{j=1}^n (t-a_j) = \kappa \prod_{k=1}^{n-1} (t-\lambda_k),
\]
for some factor $\kappa \in \Rset$.
Indeed, $\kappa = \prod_{j=1}^n a_j \prod_{k=1}^{n-1} \lambda^{-1}_k$.
We take $\alpha = (-1)^n \prod_{j=1}^n a^{-1}_j$, $s(t) \equiv 1$,
and $q(t) = \prod_{k=1}^{n-1}(1-t/\lambda_k)$.
Clearly, $\alpha \neq 0$, $s(t) \in \Rset_0[t]$, $q(t) \in \Rset_{n-1}[t]$,
and $s(0) = q(0) = 1$.
Besides,
\[
s^2(t) r(t) =
\textstyle \prod_{j=1}^n (1-t/a_j) \prod_{k=1}^{n-1}(1-t/\lambda_k) =
(\alpha t^n + q(t)) q(t),
\]
since $\prod_{j=1}^n (1-t/a_j) =
\alpha \prod_{j=1}^n (t - a_j) =
\alpha t^n - \alpha \kappa \prod_{k=1}^{n-1}(t - \lambda_k) =
\alpha t^n + q(t)$.

\textit{Case $m=n+1$}.
If $a_1,\ldots,a_n$ are the roots of
$t^{n+1} - (t-d)^2 \prod_{k=1}^{n-1} (t-\lambda_k)$,
then
\[
\textstyle t^{n+1} - (t-d)^2 \prod_{k=1}^{n-1} (t-\lambda_k) =
\kappa \prod_{j=1}^n (t-a_j),
\]
for some $\kappa \in \Rset$.
Indeed,
$\kappa = d^2 \prod_{k=1}^{n-1} \lambda_k \prod_{j=1}^n a^{-1}_j$.
We take $\alpha = (-1)^{n+1} d^{-2} \prod_{k=1}^{n-1} \lambda^{-1}_k$,
$s(t) = (1-t/d)$, and $q(t) = \prod_{j=1}^n (1-t/a_j)$.
Clearly, $\alpha \neq 0$, $s(t) \in \Rset_1[t]$, $q(t) \in \Rset_n[t]$,
and $s(0) = q(0) = 1$.
Besides,
\[
s^2(t) r(t) =
(1-t/d)^2 \prod_{j=1}^n (1-t/a_i) \prod_{k=1}^{n-1}(1-t/\lambda_k) =
(\alpha t^{n+1} + q(t)) q(t),
\]
since $(1-t/d)^2\prod_{k=1}^{n-1} (1-t/\lambda_k) =
\alpha (t-d)^2 \prod_{k=1}^{n-1} (1-t/\lambda_k) =
\alpha t^{n+1} - \alpha \kappa \prod_{j=1}^n(t - a_j) =
\alpha t^{n+1} + q(t)$.

\textit{Case $m=2n-1$}.
If $t^{2n-1} -\prod_{i=1}^{2n-1} (t-c_i)$ has double roots $d_1,\ldots,d_{n-1}$,
then
\[
\textstyle
t^{2n-1} - \prod_{i=1}^{2n-1} (t-c_i) =
\kappa \prod_{l=1}^{n-1} (t-d_l)^2,
\]
for some $\kappa \in \Rset$.
Indeed, $\kappa = \prod_{i=1}^{2n-1} c_i \prod_{l=1}^{n-1} d^{-2}_l$.
We take $\alpha = -\prod_{i=1}^{2n-1} c^{-1}_i$,
$s(t) = \prod_{l=1}^{n-1} (1-t/d_l)$, and $q(t) = s^2(t)$.
Clearly, $\alpha \neq 0$, $s(t) \in \Rset_{n-1}[t]$,
$q(t) \in \Rset_{2n-2}[t]$, and $s(0) = q(0) = 1$.
Besides,
\[
s^2(t) r(t) =
\textstyle \prod_{l=1}^{n-1}(1 - t/d_l)^2 \prod_{i=1}^{2n-1} (1 - t/c_i) =
(\alpha t^{2n-1} + q(t)) q(t),
\]
since $\prod_{i=1}^{2n-1} (1-t/c_i) =
\alpha \prod_{i=1}^{2n-1} (t - c_i) =
\alpha t^{2n-1} - \alpha \kappa \prod_{l=1}^{n-1}(t - d_l)^2 =
\alpha t^{2n-1} + q(t)$.
\end{proof}

Several questions arise about the periodic trajectories
found in the previous theorem.
Let us mention just three.
Which are their caustic types, their (Cartesian) periods,
and their winding numbers?
Inside what ellipsoids exist them?
Are there other nonsingular periodic billiard trajectories
with elliptic period three, four or five?

We will give some partial answers in the next sections.

Some technicalities become simpler after
the change of variables $t = 1/x$.
Thus, we state another polynomial formulation of
the generalized Cayley condition $\mathcal{C}(m,n)$.

\begin{pro}\label{pro:Polynomial}
Let $R(x) = x \prod_{i=1}^{2n-1} (x - \gamma_i)$, where $\gamma_i = 1/c_i$.
The generalized Cayley condition $\mathcal{C}(m,n)$ holds if and only if
there exist two monic polynomials $S(x),R(x) \in \Rset[x]$ such that
$\deg[S] = m-n$, $\deg[P] = m$, $P(0) \neq 0$, and
\begin{equation}\label{eq:SP}
S^2(x) R(x) = P(x) \big( P(x) - P(0) \big).
\end{equation}
Furthermore, if such polynomials $S(x)$ and $P(x)$ exist,
the following properties hold:
\begin{enumerate}
\item
$S(x)$ has no multiple roots;
\item
All the real roots of $S(x)$ are contained in $\{ x \in \Rset : R(x) < 0 \}$;
\item
All the roots of $S(x)$ are real when $m \le n+3$; and
\item
$P(x)$ and $P(x)-P(0)$ have the same number of real roots
(counted with multiplicity).
\end{enumerate}
\end{pro}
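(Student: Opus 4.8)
The plan is to read off the equivalence from Theorem~\ref{thm:Polynomial} by the reciprocal substitution $t=1/x$, and then to extract the four properties from the Pell-type identity that~(\ref{eq:SP}) becomes after completing the square in $P$.

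First I would pass from condition~(\ref{item:Polynomial-second}) of Theorem~\ref{thm:Polynomial} to~(\ref{eq:SP}). Given $\alpha\neq 0$, $s(t)\in\Rset_{m-n}[t]$ and $q(t)\in\Rset_{m-1}[t]$ with $s(0)=q(0)=1$, set $S(x)=x^{m-n}s(1/x)$ and $P(x)=\alpha+x^m q(1/x)$. Because the constant terms $s(0)=q(0)=1$ become leading coefficients, $S$ and $P$ are monic of degrees $m-n$ and $m$, with $P(0)=\alpha\neq 0$. Using $r(1/x)=R(x)/x^{2n}$ together with $x^{m-n}s(1/x)=S(x)$, $x^mq(1/x)=P(x)-P(0)$ and $\alpha t^m+q(t)\mapsto x^{-m}P(x)$, substituting $t=1/x$ into $s^2(t)r(t)=(\alpha t^m+q(t))q(t)$ and clearing $x^{2m}$ yields exactly $S^2(x)R(x)=P(x)\big(P(x)-P(0)\big)$. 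The construction is reversible (take $s(t)=t^{m-n}S(1/t)$ and recover $q$ and $\alpha=P(0)$ from $P$), so~(\ref{eq:SP}) is equivalent to~(\ref{item:Polynomial-second}), hence to $\mathcal{C}(m,n)$.

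For the four properties I would put $\kappa=P(0)^2/4>0$ and $u(x)=P(x)-P(0)/2$, a monic polynomial of degree $m$, so that~(\ref{eq:SP}) reads $u^2-S^2R=\kappa$. Since the $c_i$ are pairwise distinct and positive, $R$ is squarefree with $2n$ real roots $\rho_0<\cdots<\rho_{2n-1}$, and $\{R<0\}$ is the union of $n$ bounded bands $I_j=(\rho_{2j},\rho_{2j+1})$ separated by $n-1$ bounded gaps on which $R>0$. A common root of $u$ and $S$ would force $\kappa=0$, so $\gcd(u,S)=1$; differentiating gives $2uu'=S(2S'R+SR')$, whence $S\mid u'$ and $u'=SV$ with $\deg V=n-1$. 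Moreover $P$ and $P-P(0)$ are coprime (they differ by the nonzero constant $P(0)$), so every irreducible factor of $S^2R$ divides exactly one of them, producing monic factorizations $P=S_P^2R_P$ and $P-P(0)=S_{\tilde P}^2R_{\tilde P}$ with $S=S_PS_{\tilde P}$ and $R=R_PR_{\tilde P}$.

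Property (2) is the heart of the matter, and I expect it to be the main obstacle: it has to be proved by a Rolle-type count made tight enough to saturate $\deg V=n-1$. On each band $|u|\le\sqrt\kappa$, on each gap $|u|\ge\sqrt\kappa$ with $u$ of constant sign (it cannot vanish where $u^2\ge\kappa>0$), and $|u|=\sqrt\kappa$ occurs exactly at the roots of $R$ and of $S$. In a bounded gap carrying $s_j$ roots of $S$, the graph of $u$ returns $s_j$ times to the level $\pm\sqrt\kappa$, so it has at least $s_j+1$ interior extrema where $u'=0$ but $S\neq 0$, i.e.\ at least $s_j+1$ roots of $V$. Summing over the $n-1$ bounded gaps gives at least $(n-1)+\sum_j s_j$ roots of $V$; as $\deg V=n-1$ this forces every $s_j=0$ and pins $V$ to one simple root per bounded gap and none elsewhere. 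Hence $S$ has no root in a gap, and a multiplicity comparison in $u^2-S^2R=\kappa$ (a shared root of $S$ and $R$ would demand a root of $V$ at some $\rho_i$, which is now impossible) shows $S$ and $R$ share no root either; so all real roots of $S$ lie in $\{R<0\}$.

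The other properties then cascade. For~(1), at a root $\rho$ of $S$ the identity $u^2-\kappa=S^2R$ gives $u'$ multiplicity $2\,\mathrm{mult}_S(\rho)-1$ at $\rho$ (here $R(\rho)\neq 0$), whereas $u'=SV$ with $V(\rho)\neq 0$ gives multiplicity $\mathrm{mult}_S(\rho)$; equating the two yields $\mathrm{mult}_S(\rho)=1$, so $S$ is squarefree. Since $V$ has no root inside the bands, $u$ is monotone between consecutive roots of $S$ in a band, so these touchings alternate between the levels $+\sqrt\kappa$ and $-\sqrt\kappa$; tracking the sign of $u$ at the $2n$ roots of $R$ (which is constant across each gap) then gives, after a short computation, that $P$ and $P-P(0)$ have equally many real roots, which is~(4). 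Finally~(3) follows from~(4): it forces $S_P$ and $S_{\tilde P}$ to have the same number of non-real roots, but when $\deg S=m-n\le 3$ the polynomial $S$ has at most one conjugate pair of non-real roots, and such a pair cannot be split between the two real factors; hence it cannot occur and every root of $S$ is real.
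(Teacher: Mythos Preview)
Your proof is correct and tracks the paper's closely: both hinge on the Pell-type identity $u^2-S^2R=\kappa$ (with $u=P-P(0)/2$, $\kappa=P(0)^2/4$) and a saturated degree count. The paper organises the count by factoring $W'=(S^2R)'=2uu'$ according to the sign of $W$ at each root and bounding the degrees of the resulting pieces $W_-,W_0,W_+$; you instead isolate $V$ in $u'=SV$ at the outset and run Rolle on the $n-1$ bounded gaps to pin its $n-1$ roots there. This is the same count in different packaging (your $V$ is, up to the constant $m$, the paper's $W_+$, once the paper concludes $W_-=T$, $W_0=S$, $W_*=1$); arguably your version is a little more direct since the factorisation $u'=SV$ is available from the start. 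The one place your write-up is thin is~(4): the sign-tracking sketch can be made to work, but the clean route already implicit in your setup is the paper's own argument---every real critical point of $P$ is a zero of $u'=SV$, hence either a root of $S$ (where $P\in\{0,P(0)\}$) or a root of $V$ (hence in a gap, where $|u|\ge\sqrt\kappa$, so $P\notin(0,P(0))$), and therefore the number of real roots of $P-\eta$ stays constant as $\eta$ runs from $0$ to $P(0)$.
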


\begin{proof}
The ``if and only if'' follows directly from the change of variables $t = 1/x$.
Concretely, the relation between the objects of identities~(\ref{eq:sq})
and~(\ref{eq:SP}) is
\[
R(x) = x^{2n} r(1/x),\qquad
S(x) = x^{m-n} s(1/x),\qquad
P(x) = \alpha + x^m q(1/x).
\]
Then $P(0) \neq 0$ if and only if $\alpha \neq 0$,
$s(0) = 1$ if and only if $S(x)$ is a monic polynomial of degree $m-n$,
and $q(0) = 1$ if and only if $P(x)$ is a monic polynomial of degree $m$.

To prove the first two properties,
suffice it to prove that $\gcd[S,RS'] = 1$ and
\[
l_+ := \# \{ x \in \Rset: S(x) = 0 < R(x) \} = 0.
\]
If $W(x) = P(x) (P(x) - P(0))$ and $T(x) = P(x) - P(0)/2$,
we get from~(\ref{eq:SP}) that
\begin{eqnarray*}
W(x) & = & S^2(x)R(x) = T^2(x) - P^2(0)/4, \\
W'(x) & = & S(x) ( S(x) R'(x) + 2 R(x) S'(x) ) = 2 T(x) P'(x).
\end{eqnarray*}

We consider the factorization $W'(x) = 2m W_-(x) W_0(x) W_+(x) W_\ast(x)$,
where if $z \in \Cset$ is a root of multiplicity $\beta$ of $W'(x)$
such that $W(z) < 0$, $W(z) = 0$, $W(z) > 0$, or $W(z) \not\in \Rset$,
then $(x-z)^\beta$ is included in the monic factor
$W_-(x)$, $W_0(x)$, $W_+(x)$, or $W_\ast(x)$, respectively.
Next, we find some lower bounds of the degrees of these factors.

First, $T$ is divisor of $W_-$,
because $W$ takes the negative value $-P^2(0)/4$ at each root of $T$.
Hence, $\deg[W_-] \ge \deg[T] = m$.
Second, $S \gcd[S,RS']$ is a divisor of $W_0$,
because $W$ vanishes at each root of $S$.
Thus, $\deg[W_0] \ge m - n + l_0$,
where $l_0$ denotes the degree of $\gcd[S,RS']$.
Third,
\[
\deg[W_+] \ge
\# \left\{
(a,b) \subset \Rset:
\begin{array}{l}
W(a) = W(b) = 0 \\
\mbox{$R(x) > 0$ for all $x\in(a,b)$} \\
\mbox{$S(x) \neq 0$ for all $x \in (a,b)$} 
\end{array}
\right\} =
n-1 + l_+.
\]
To understand the above inequality,
we realize that if $(a,b)$ is an open bounded interval
that satisfies the above three properties,
then $W(x) = S^2(x)R(x) > 0$ for all $x \in (a,b)$,
and $W'(x)$ vanishes at some point $c \in (a,b)$, by Rolle's Theorem.
Therefore, $\deg[W_+]$ is at least the number of such intervals.
We combine these three lower bounds:
\[
2m-1 = \deg[W'] \ge \deg[W_-] + \deg[W_0] + \deg[W_+]
\ge 2m - 1 + l_0 + l_+.
\]
This implies that $l_0 = l_+ = 0$.
Indeed, $W_- = T$, $W_0 = S$, $W_\ast = 1$, and $\gcd[S,RS'] = 1$.

Next, we prove the property about the number of roots of
$P(x)$ and $P(x) - P(0)$.
Let $z$ be a root of the derivative $P'$.
Since $W' = 2 T P'$ and $W_- = T$,
we deduce that $W(z)$ cannot be a negative number.
This implies that if $P(z)$ is a real value between $0$ and $P(0)$,
then $P'(z) \neq 0$, since $W(z) = P(z) (P(z) - P(0)) < 0$.
In particular,
we deduce that the number of real roots (counted with multiplicity)
of the polynomial $P(x)-\eta$ does not change when
the constant $\eta \in \Rset$ moves from $0$ to $P(0)$.

Finally, we prove that $S(x)$ has only real roots when $m \le n+3$.
Let us suppose that $z \not \in \Rset$ is a root of $S(x)$.
Then $\bar{z}$ is also a root of $S(x)$,
so $\Divide{(x-z)(x-\bar{z})}{S(x)}$.
Using the identity $S^2(x)R(x) = P(x)(P(x)-P(0))$,
we get that $(x-z)^2(x-\bar{z})^2$ is either a divisor of $P(x)$
or a divisor of $P(x)-P(0)$,
since $P(x)$ and $P(x) - P(0)$ have no common factors.
But $P(x)$ and $P(x)-P(0)$ have the same number of real roots,
so there exists another $w \not \in \Rset \cup \{ z,\bar{z}\}$
such that $(x-w)^2(x-\bar{w})^2$ is a divisor of $P(x)(P(x)-P(0))$.
This implies that $S(x)$ has, at least, four different complex roots,
and so $m-n = \deg[S] \ge 4$. 
\end{proof}

There are some theoretical arguments against the existence of
non-real roots of polynomial $S(x)$,
although we have not been able to prove it.

\begin{con}\label{con:Roots_of_S}
Let $R(x) = x \prod_{i=1}^{2n-1} (x - \gamma_i)$ with
$0 < \gamma_{2n-1} < \cdots < \gamma_1$.
If relation~(\ref{eq:SP}) holds for some polynomials
$S(x),P(x) \in \Rset[x]$ such that $P(0) \neq 0$,
then $S(x)$ has only real roots.
\end{con}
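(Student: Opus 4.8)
The plan is to recast relation~(\ref{eq:SP}) as a polynomial Pell equation and thereby reduce the conjecture to a statement about the real roots of a single polynomial. Writing $T(x) = P(x) - P(0)/2$ and $\lambda = P(0)^2/4 > 0$, the identity $S^2 R = P(P - P(0))$ becomes
\[
T^2(x) - R(x) S^2(x) = \lambda, \qquad \lambda > 0,
\]
with $T$ monic of degree $m$, $S$ monic of degree $m-n$, and $R$ monic of degree $2n$ with the $2n$ simple real roots $0 < \gamma_{2n-1} < \cdots < \gamma_1$ together with $0$. Denote by $\rho$ the number of real roots of $S$. Since $S$ is squarefree (property~1 of Proposition~\ref{pro:Polynomial}) and its roots never meet those of $R$ (the real roots of $S$ lie in $\{R < 0\}$ by property~2, whereas the roots of $R$ bound those intervals), the real roots of $T^2 - \lambda = R S^2$, counted with multiplicity, number exactly $2n + 2\rho$. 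As $\deg(T^2 - \lambda) = 2m$, the conjecture that $S$ has only real roots is \emph{equivalent} to $T^2 - \lambda$ having only real roots, i.e. to $T - \sqrt{\lambda}$ and $T + \sqrt{\lambda}$ each splitting over $\Rset$.

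The Pell identity endows $T$ with a Chebyshev-like oscillation that I would record first. The set $\{R < 0\}$ is a union of $n$ bounded open intervals $B_1,\ldots,B_n$ (the \emph{bands}); on each closed band $|T| \le \sqrt{\lambda}$, on the complementary intervals where $R > 0$ one has $|T| \ge \sqrt{\lambda}$, and $T = \pm\sqrt{\lambda}$ at each of the $2n$ roots of $R$. Differentiating the Pell identity shows that at every real root $x_0$ of $S$ one has $T(x_0) = \pm\sqrt{\lambda}$ and $T'(x_0) = 0$, so the graph of $T$ touches the boundary of the strip $[-\sqrt{\lambda},\sqrt{\lambda}]$ tangentially from inside a band. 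Thus the real roots of $S$ are precisely the interior tangencies of $T$ with the levels $\pm\sqrt{\lambda}$, and the whole problem is to produce the full count of $m-n$ of them.

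I see two complementary routes. The most promising is a \emph{deformation} argument exploiting squarefreeness: on the real variety of admissible data $(\gamma_1,\ldots,\gamma_{2n-1};S,P)$ satisfying~(\ref{eq:SP}) with the prescribed degrees, $S$ is monic of fixed degree and, by Proposition~\ref{pro:Polynomial}, never has a multiple root. Hence two real roots can never collide and escape as a conjugate pair, so $\rho$ is locally constant along any continuous family of solutions. It would then suffice to connect an arbitrary solution to one with $S$ known to split over $\Rset$, taking as base points the settled range $m \le n+3$ (property~3) and the explicit solutions of Theorem~\ref{thm:Samples}, for which $S$ is a product of real linear factors. The second route is a direct count: using the intermediate value theorem on the bands and Rolle's theorem on the gaps, one would try to force $T \mp \sqrt{\lambda}$ to have $m$ real roots each by bounding below the number of tangencies band by band.

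The main obstacle, common to both routes, is the \emph{global synchronization} of the oscillation. Locally nothing forbids $T$ from crossing a band monotonically from $+\sqrt{\lambda}$ to $-\sqrt{\lambda}$ with no interior tangency, parking the missing roots of $S$ in a complex-conjugate pair; ruling this out requires the global fact that $(T,S)$ is a power of the fundamental solution of the Pell equation for $\sqrt{R}$, which couples the per-band tangency counts. In the deformation route this reappears as the need to prove that the solution variety for fixed $(m,n)$ is connected, or at least that each of its components meets a base point with real $S$; in the direct route it reappears as the need to control the sign pattern of $T$ at the $2n$ roots of $R$. Converting the Pell/torsion structure on the hyperelliptic curve $y^2 = R(x)$ into such connectivity or sign-pattern information is, I expect, the crux, and plausibly the reason the statement is left as a conjecture.
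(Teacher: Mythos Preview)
The statement is Conjecture~\ref{con:Roots_of_S}, and the paper does \emph{not} prove it: it only remarks that ``there are some theoretical arguments against the existence of non-real roots of polynomial $S(x)$, although we have not been able to prove it,'' and then states the conjecture. There is therefore no paper proof to compare your proposal against.

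Your proposal is not a proof either, and to your credit you say so explicitly in the final paragraph. What you have produced is a sound reformulation and a roadmap. The Pell rewriting $T^2 - R S^2 = \lambda$ with $T = P - P(0)/2$ and $\lambda = P(0)^2/4$ is correct, and the equivalence you state---that $S$ has only real roots if and only if $T^2 - \lambda$ has only real roots---does follow from the squarefreeness of $S$ and the fact that the real roots of $S$ avoid those of $R$ (properties~1 and~2 of Proposition~\ref{pro:Polynomial}). The band/gap description of $|T|$ relative to $\sqrt{\lambda}$ and the identification of real roots of $S$ with interior tangencies are also correct consequences of the Pell identity and its derivative.

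The gap is exactly the one you name: neither of your two routes is completed. For the deformation route you would need connectedness (or at least that every component meets a known real-$S$ solution) of the solution variety for fixed $(m,n)$, and you do not establish this; squarefreeness of $S$ indeed prevents real roots from colliding and escaping \emph{along a path}, but it does not by itself rule out an isolated component on which $S$ has a conjugate pair from the outset. For the direct-count route you would need control of the sign pattern of $T$ at the $2n$ roots of $R$, which the local analysis on a single band cannot supply. Your closing diagnosis---that the missing ingredient is the global Pell/torsion structure on the hyperelliptic curve $y^2 = R(x)$---is reasonable, and is consistent with the paper leaving the statement open.

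In short: your write-up goes further than the paper in articulating the structure of the problem and isolating the obstruction, but it is (appropriately) not a proof, and neither is anything in the paper.
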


\section{Generalized Cayley conditions in the minimal case}
\label{sec:Minimal}

Let us consider the case of minimal elliptic periods; that is, $m = n$.

We begin with a technical lemma to describe how the roots of the polynomials
of the form $P(x) (P(x)-P(0))$ with $P(0) \neq 0$ are ordered in the real line,
assuming that all these roots are positive ---but the trivial one---,
and have multiplicity at most two.

\begin{lem}\label{lem:Ordering}
Let $P(x) \in \Rset[x]$ be a monic polynomial of degree $m$ such that
$P(0) \neq 0$ and all the roots of $P(x)(P(x)-P(0))$ are positive
---but a simple root at $x=0$---, and have multiplicity at most two.
Let $\alpha_m \le \cdots \le \alpha_1$ be the positive roots of $P(x)$.
Let $\beta_{m-1} \le \cdots \le \beta_1$ be the positive roots of $P(x) -P(0)$.

If $m$ is odd,
then $\beta_{2l-1}, \beta_{2l} \in (\alpha_{2l},\alpha_{2l-1})$
for all $l = 1,\ldots,(m-1)/2$; so
\[
0 < \alpha_m \le \alpha_{m-1} < \beta_{m-1} \le \beta_{m-2} <
 \alpha_{m-2} \le \alpha_{m-3} < \cdots < \alpha_3 \le \alpha_2 < \beta_2 \le \beta_1 < \alpha_1.
\]
If $m$ is even, then $\beta_1 > \alpha_1$ and
$\beta_{2l},\beta_{2l+1} \in (\alpha_{2l+1},\alpha_{2l})$
for all $l = 1,\ldots,(m-2)/2$; so
\[
0 < \alpha_m \le \alpha_{m-1} < \beta_{m-1} \le \beta_{m-2} <
\alpha_{m-2} \le \alpha_{m-3} < \cdots <
\beta_3 \le \beta_2 < \alpha_2 \le \alpha_1 < \beta_1.
\]
\end{lem}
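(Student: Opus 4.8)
The plan is to read the interlacing directly off the graph of $y=P(x)$ on the positive axis, viewing the $\alpha_i$ as the roots of $P$ and the $\beta_j$ (together with $x=0$) as the solutions of $P(x)=P(0)$. Since $P(0)\neq 0$ these two root sets are disjoint, and since $P$ is monic with all its roots positive we have $\mathrm{sgn}\,P(0)=(-1)^m$. First I would fix the shape of the graph. By the Gauss--Lucas theorem every critical point of $P$ lies in $[\alpha_m,\alpha_1]$, so $P$ is strictly monotone on $(0,\alpha_m)$ and on $(\alpha_1,\infty)$. Moreover, as the $m$ roots of $P$ are real with multiplicity at most two, the degree count $m-1$ for $P'$ is already saturated: $P'$ has a simple zero at each double root of $P$ and, by Rolle, at least one zero strictly between each pair of consecutive distinct roots. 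Hence there is exactly one simple critical point in each inter-root gap and none outside $[\alpha_m,\alpha_1]$, so $P$ is unimodal on every bounded inter-root interval.

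From this rigid shape I would read off where $P(x)=P(0)$ can hold. On $(0,\alpha_m)$ the monotone $P$ runs from $P(0)$ to $0$ without reattaining $P(0)$, so no $\beta_j$ lies there. On $(\alpha_1,\infty)$ the monotone $P$ increases from $0$ to $+\infty$, meeting the level $P(0)$ exactly once precisely when $P(0)>0$, i.e.\ when $m$ is even; this yields the single root $\beta_1>\alpha_1$ in the even case and none in the odd case. On each bounded inter-root interval $P$ is a single hump vanishing at both ends, so it attains the value $P(0)$ at most twice, and only if the constant sign of $P$ on that interval equals $\mathrm{sgn}\,P(0)=(-1)^m$; when it is attained exactly once the hump is tangent to that level, i.e.\ $P-P(0)$ acquires a double root there.

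Next the hypothesis that all $m-1$ numbers $\beta_j$ are real and positive does the rest. They can sit only in the sign-matching bounded gaps (and, for even $m$, the top ray already carries $\beta_1$), each such gap holding at most two; since the total is forced to be $m-1$, every eligible gap must contain exactly two, a coincident pair $\beta_{2l-1}=\beta_{2l}$ marking a tangency. Reading the sign of the humps from the top downward --- it flips across a simple root of $P$ and repeats across a double root --- identifies the sign-matching gaps as exactly $(\alpha_{2l},\alpha_{2l-1})$ for $l=1,\dots,(m-1)/2$ when $m$ is odd, and $(\alpha_{2l+1},\alpha_{2l})$ for $l=1,\dots,(m-2)/2$ when $m$ is even. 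Feeding this back gives the two displayed chains, the equalities in them accounting for the permitted double roots of $P$ (coincident $\alpha$'s) and of $P-P(0)$ (coincident $\beta$'s).

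The delicate point, and the one I expect to require the most care, is precisely this sign bookkeeping in the presence of double roots, together with excluding degenerate placements. A double root at the wrong junction --- for instance a double largest root $\alpha_1=\alpha_2$ when $m$ is odd --- would collapse a gap the pattern needs, and is in fact impossible: one checks that it drives a conjugate pair of solutions of $P(x)=P(0)$ off the real axis, contradicting the hypothesis that all roots of $P(x)\bigl(P(x)-P(0)\bigr)$ are real. Thus the reality-and-positivity hypothesis is exactly what rigidifies the alternating sign pattern and selects the admissible double roots; turning this forcing into a clean, case-free argument, rather than the graphical picture itself, is the crux.
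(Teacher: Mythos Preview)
Your outline is correct and shares the paper's graphical viewpoint, but you are missing the one observation that dissolves your ``delicate point'' entirely. The paper also reads the interlacing off the graph of $P$, but it first bounds the critical \emph{values}: since by hypothesis both $P(x)$ and $P(x)-P(0)$ have all $m$ roots real (counted with multiplicity), and since your own degree count shows every critical point of $P$ is nondegenerate, the level set $\{P(x)=\eta\}$ consists of $m$ real points if and only if $\underline\eta\le\eta\le\overline\eta$, where $\overline\eta$ is the smallest local-maximum value and $\underline\eta$ the largest local-minimum value of $P$. Applying this with $\eta=0$ and $\eta=P(0)$ yields at once that every local maximum of $P$ lies at or above $\max(0,P(0))$ and every local minimum at or below $\min(0,P(0))$. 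Hence on each monotone stretch between consecutive critical points the graph sweeps through \emph{both} horizontal lines $y=0$ and $y=P(0)$, and the intersections with the two lines alternate in the stated order from left to right, regardless of which $\alpha_i$ happen to coincide. No sign bookkeeping and no exclusion of ``wrong'' double roots is needed.

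Your capacity-counting argument (at most two $\beta_j$'s per sign-matching hump, then forcing every eligible hump to be full) is sound in principle, but as you acknowledge it postpones the real work to the case analysis on double roots. The critical-value bound is precisely the ingredient that makes this case-free: once you know every hump reaches past both levels $0$ and $P(0)$, the question of which humps are ``eligible'' never arises. Replace your last two paragraphs with this observation and the sketch becomes a complete, short proof.
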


\begin{proof}
Let $\eta \in \Rset$.
Using that the only critical points of $P(x)$
are non-degenerate local maxima or non-degenerate local minima,
we deduce that the polynomial $P(x)-\eta$ has $m$ real roots
(counted with multiplicity) if and only if
$\underline{\eta} \le \eta \le \overline{\eta}$, where
\begin{eqnarray*}
\overline{\eta} & = &
\min
\left\{
P(\overline{x}) : \mbox{$\overline{x}$ is a non-degenerate local maximum of $P(x)$}
\right\}, \\
\underline{\eta} & = &
\max
\left\{
P(\underline{x}) : \mbox{$\underline{x}$ is a non-degenerate local minimum of $P(x)$}
\right\}.
\end{eqnarray*}
Therefore, $\underline{\eta} \le \min(0,P(0))$ and
$\overline{\eta} \ge \max(0,P(0))$.

We begin with the case $m$ odd,
so $P(0) = (-1)^m \prod_{j=1}^m \alpha_j < 0$,
$\underline{\eta} \le P(0)$, and $\overline{\eta} \ge 0$.
The roots of $P(x)$ and $P(x)-P(0)$ can be viewed as the abscissae of
the intersections of the graph $\{ y = P(x)\}$ with the horizontal
line $\{ y = 0 \}$ and $\{ y = P(0) \}$, respectively.
Double roots correspond to tangential intersections.
We know that $P(\overline{x}) \ge \overline{\eta} \ge 0$ at the local maxima,
and $P(\underline{x}) \le \underline{\eta} \le P(0)$ at the local minima.
This means that the intersections of the graph $\{ y = P(x) \}$ with
the lines $\{ y = 0 \}$ and $\{ y = P(0) \}$ have the following pattern
from left to right.
First, the graph crosses $\{ y = P(0) \}$ at the abscissa $x = 0$;
second, it intersects $\{ y = 0 \}$ at two abscissae $\alpha_m$ and $\alpha_{m-1}$,
which may coincide giving rise to a double root of $P(x)$;
third, it intersects $\{ y = P(0) \}$ at two abscissae $\beta_{m-1}$ and $\beta_{m-2}$,
which may coincide giving rise to a double root of $P(x) - P(0)$;
fourth, it intersects $\{ y = 0 \}$ at two abscissae $\alpha_{m-2}$ and $\alpha_{m-3}$,
which may coincide giving rise to a double root of $P(x)$;
and so on.
The last intersection correspond to the abscissa $x = \alpha_1$.

The proof for $m$ even is similar.
We skip the details.
\end{proof}

We emphasize that ellipsoidal parameters $0 < a_1 < \cdots < a_n$ and
nonsingular caustic parameters $\lambda_1 < \cdots < \lambda_{n-1}$
verify restrictions~(\ref{eq:ExistenceCondition});
then the parameters $0 < c_1 < \cdots < c_{2n-1}$ are defined in~(\ref{eq:c});
next the inverse quantities $0 < \gamma_{2n-1} < \cdots < \gamma_1$
are given by $\gamma_i = 1/c_i$;
and finally, $R(x) = x \prod_{i = 1}^{2n-1} (x - \gamma_i)$.
We will make use of these notations, orderings, and conventions along
the paper without any explicit mention.

\begin{cor}\label{cor:JK}
Let $\{1,\ldots,2n-1 \} = J_n \cup K_n$ be the decomposition defined by
\[
J_1 = \{ 1 \},\quad
J_2 = \{2,3\},\quad
J_n = J_{n-2} \cup \{2n-2,2n-1\},\quad
K_n = J_{n-1}.
\]
If $P(x) \in \Rset[x]$ is a monic polynomial of degree $n$
such that $P(0) \neq 0$ and
\[
R(x) = P(x) (P(x) - P(0)),
\]
then $P(x) = \prod_{j \in J_n} (x-\gamma_j) =
P(0) + x \prod_{k \in K_n} (x-\gamma_k)$.
\end{cor}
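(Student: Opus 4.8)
The two displayed equalities are nothing but a restatement of which $\gamma_i$ are roots of $P(x)$ and which are roots of $P(x)-P(0)$, so the whole statement is a bookkeeping consequence of the ordering Lemma~\ref{lem:Ordering}. First I would record the rigid structure forced by $R(x)=P(x)\bigl(P(x)-P(0)\bigr)$. The polynomial $R(x)=x\prod_{i=1}^{2n-1}(x-\gamma_i)$ is monic of degree $2n$ with the $2n$ distinct roots $0<\gamma_{2n-1}<\cdots<\gamma_1$. Since $P$ is monic of degree $n$ with $P(0)\neq0$, the two monic degree-$n$ factors $P(x)$ and $P(x)-P(0)$ can share no root (their difference is the nonzero constant $P(0)$), so their root sets are disjoint and together exhaust the roots of $R$. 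As $P(0)\neq0$, the value $x=0$ is a (simple) root of $P(x)-P(0)$ but not of $P$. Hence the $n$ roots of $P$ are $n$ of the positive numbers $\gamma_i$, the roots of $P(x)-P(0)$ are $0$ together with the remaining $n-1$ of them, and all are simple.

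Next I would apply Lemma~\ref{lem:Ordering} with $m=n$; its hypotheses hold because $P(x)\bigl(P(x)-P(0)\bigr)=R(x)$ has only positive roots apart from the simple root at $x=0$, all of multiplicity one. Writing $\alpha_n<\cdots<\alpha_1$ for the roots of $P$ and $\beta_{n-1}<\cdots<\beta_1$ for the positive roots of $P(x)-P(0)$, the simplicity of every root turns each weak inequality of the lemma into a strict one, so the lemma prescribes a single strict interlacing of the $\alpha_j$'s and $\beta_k$'s. Since these $2n-1$ numbers are precisely $\gamma_{2n-1}<\cdots<\gamma_1$ listed increasingly, matching them position by position determines, with no ambiguity, which $\gamma_i$ is a root of $P$ and which is a nonzero root of $P-P(0)$.

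It then remains to check that this assignment reproduces $J_n$ and $K_n$. Using that $\gamma_i$ occupies position $2n-i$ from the left, the lemma says that the two smallest values $\gamma_{2n-1},\gamma_{2n-2}$ are the roots $\alpha_n,\alpha_{n-1}$ of $P$, that the next two $\gamma_{2n-3},\gamma_{2n-4}$ are the roots $\beta_{n-1},\beta_{n-2}$ of $P-P(0)$, and that this $\alpha\alpha\beta\beta$ block repeats, leaving one extremal root at the top (an $\alpha$ when $n$ is odd, a $\beta$ when $n$ is even). I would close the argument by induction on $n$: splitting off the index pairs $\{2n-2,2n-1\}$ (roots of $P$) and $\{2n-4,2n-3\}$ (roots of $P-P(0)$) leaves the indices $\{1,\ldots,2n-5\}$ arranged exactly as a degree-$(n-2)$ instance, and this matches the recursions $J_n=J_{n-2}\cup\{2n-2,2n-1\}$ and $K_n=J_{n-1}=J_{n-3}\cup\{2n-4,2n-3\}$; the base cases $n=1,2$ are immediate. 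Thus the roots of $P$ are $\{\gamma_j:j\in J_n\}$ and the nonzero roots of $P-P(0)$ are $\{\gamma_k:k\in K_n\}$, whence $P(x)=\prod_{j\in J_n}(x-\gamma_j)$ and $P(x)-P(0)=x\prod_{k\in K_n}(x-\gamma_k)$.

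The only delicate point is this last combinatorial translation: one must track the index reversal $p\mapsto 2n-p$ between value-order and label-order and treat the parity of $n$ separately, since the lone extremal root changes type. All the analytic content is already packaged in Lemma~\ref{lem:Ordering}; the role of the simplicity of the roots of $R$ is precisely to upgrade its weak interlacing inequalities into a unique strict assignment.
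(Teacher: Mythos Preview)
Your proof is correct and follows essentially the same approach as the paper: both arguments observe that the factorization $R(x)=P(x)\bigl(P(x)-P(0)\bigr)$ forces a decomposition $\{1,\ldots,2n-1\}=J'\cup K'$, then invoke Lemma~\ref{lem:Ordering} to read off the interlacing and conclude $J'=J_n$, $K'=K_n$. The paper compresses the final combinatorial identification into a single ``Therefore'', whereas you spell it out via the $\alpha\alpha\beta\beta$ block pattern and an induction matching the recursion $J_n=J_{n-2}\cup\{2n-2,2n-1\}$; this added detail is helpful but not a different method.
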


\begin{proof}
There exists a decomposition $\{1,\ldots,2n-1 \} = J' \cup K'$
such that $\# J' = n$, $\# K' = n-1$, and
$P(x) =
 \prod_{j \in J'} (x-\gamma_j) =
 P(0) + x \prod_{k \in K'} (x-\gamma_k)$.
The polynomial $P(x)$ verifies the hypotheses stated in Lemma~\ref{lem:Ordering},
so the roots $\{ \alpha_1, \ldots, \alpha_m \} = \{ \gamma_j : j \in J' \}$ and
$\{ \beta_1, \ldots, \beta_{m-1} \} = \{ \gamma_k : k \in K' \}$ obey
the ordering described in that lemma.
Therefore, $J' = J_n$ and $K' = K_n$.
\end{proof}

We rewrite now the generalized Cayley condition $\mathcal{C}(n,n)$
using the previous results.
For brevity, we omit the dependence of the decomposition
$\{1,\ldots,2n-1\} = J \cup K$ on the index $n$.
We note that $\# J = n$ and $\# K = n-1$.
The symbol $\rme_l(\mbox{``a set of parameters''})$ denotes the
\emph{elementary symmetric polynomial} of degree $l$ in
those parameters.

\begin{pro}\label{pro:C_n_n}
$\mathcal{C}(n,n)$ is equivalent to any of the following four conditions:
\begin{enumerate}[(i)]
\item\label{item:C_n_n-first}
If $P(x) = \prod_{j \in J} (x-\gamma_j)$,
then $P(x) - P(0) = x \prod_{k \in K} (x-\gamma_k)$.

\item\label{item:C_n_n-second}
$\prod_{j \in J} (\gamma_j - \gamma_k) = \prod_{j \in J} \gamma_j$, for all $k \in K$.

\item\label{item:C_n_n-third}
$\rme_l ( \{\gamma_j\}_{j \in J}) = \rme_l ( \{\gamma_k\}_{k \in K})$,
for all $l=1,\ldots,n-1$.

\item\label{item:C_n_n-fourth}
$\sum_{j \in J} \gamma_j^l = \sum_{k \in K} \gamma_k^l$,
for all $l=1,\ldots,n-1$.
\end{enumerate}
\end{pro}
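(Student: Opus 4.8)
The plan is to route every equivalence through condition~(\ref{item:C_n_n-first}): I would first establish $\mathcal{C}(n,n) \Leftrightarrow \text{(\ref{item:C_n_n-first})}$, then read off (\ref{item:C_n_n-second}) and (\ref{item:C_n_n-third}) from (\ref{item:C_n_n-first}) by comparing polynomials coefficient by coefficient, and close with (\ref{item:C_n_n-third}) $\Leftrightarrow$ (\ref{item:C_n_n-fourth}). The first equivalence is essentially a specialization of machinery already in place. Setting $m = n$ in Proposition~\ref{pro:Polynomial} forces $\deg[S] = m-n = 0$, so the only monic choice is $S(x) \equiv 1$ and relation~(\ref{eq:SP}) collapses to $R(x) = P(x)\bigl(P(x) - P(0)\bigr)$ for a monic $P$ of degree $n$ with $P(0) \neq 0$. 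Since $R(x) = x\prod_{i=1}^{2n-1}(x - \gamma_i)$ carries only the distinct positive roots $\gamma_i$ together with a simple root at the origin, the hypotheses feeding Lemma~\ref{lem:Ordering} hold and Corollary~\ref{cor:JK} applies, yielding $P(x) = \prod_{j\in J}(x-\gamma_j)$ and $P(x) - P(0) = x\prod_{k\in K}(x-\gamma_k)$, which is exactly~(\ref{item:C_n_n-first}). Conversely, (\ref{item:C_n_n-first}) hands this factorization back directly, with $P(0) = (-1)^n\prod_{j\in J}\gamma_j \neq 0$, so~(\ref{eq:SP}) holds with $S\equiv 1$ and $\mathcal{C}(n,n)$ follows.

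For (\ref{item:C_n_n-first}) $\Leftrightarrow$ (\ref{item:C_n_n-third}) I would expand $P(x) = \sum_{l=0}^{n}(-1)^l\rme_l(\{\gamma_j\}_{j\in J})\,x^{n-l}$. Subtracting $P(0)$ removes the $l=n$ term, so $\bigl(P(x)-P(0)\bigr)/x$ is the monic polynomial of degree $n-1$ with coefficients $(-1)^l\rme_l(\{\gamma_j\}_{j\in J})$; matching it term by term against $\prod_{k\in K}(x-\gamma_k) = \sum_{l}(-1)^l\rme_l(\{\gamma_k\}_{k\in K})\,x^{n-1-l}$ gives $\rme_l(\{\gamma_j\}_{j\in J}) = \rme_l(\{\gamma_k\}_{k\in K})$ for $l=1,\ldots,n-1$. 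For (\ref{item:C_n_n-first}) $\Leftrightarrow$ (\ref{item:C_n_n-second}) I would note that, because the $n-1$ values $\gamma_k$ are distinct and $\bigl(P(x)-P(0)\bigr)/x$ is monic of degree $n-1$, condition (\ref{item:C_n_n-first}) is equivalent to each $\gamma_k$ being a root of $P(x)-P(0)$, i.e. $P(\gamma_k)=P(0)$; substituting $P(\gamma_k) = \prod_{j\in J}(\gamma_k-\gamma_j) = (-1)^n\prod_{j\in J}(\gamma_j-\gamma_k)$ and $P(0) = (-1)^n\prod_{j\in J}\gamma_j$, then cancelling the common sign $(-1)^n$, reproduces (\ref{item:C_n_n-second}) verbatim.

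The last equivalence (\ref{item:C_n_n-third}) $\Leftrightarrow$ (\ref{item:C_n_n-fourth}) is the classical Newton--Girard correspondence: over $\Qset$ the first $n-1$ elementary symmetric functions and the first $n-1$ power sums of a finite multiset determine one another through a triangular recursion, so the two families agree for $J$ and for $K$ simultaneously. I do not anticipate a genuine obstacle in any of the calculations; the one step carrying real content is pinning down the decomposition $\{1,\ldots,2n-1\} = J\cup K$, and that has already been settled by Lemma~\ref{lem:Ordering} and Corollary~\ref{cor:JK}. The whole argument therefore reduces to assembling those earlier results with routine symmetric-function bookkeeping.
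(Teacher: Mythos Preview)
Your proposal is correct and follows essentially the same route as the paper: both argue $\mathcal{C}(n,n)\Leftrightarrow$(\ref{item:C_n_n-first}) via Proposition~\ref{pro:Polynomial} (with $S\equiv 1$) and Corollary~\ref{cor:JK}, then obtain (\ref{item:C_n_n-second}) and (\ref{item:C_n_n-third}) by evaluating $P$ at the $\gamma_k$ and by matching coefficients, and finish with Newton's identities for (\ref{item:C_n_n-third})$\Leftrightarrow$(\ref{item:C_n_n-fourth}). The only differences are cosmetic---you reorder the (\ref{item:C_n_n-second})/(\ref{item:C_n_n-third}) steps and make the $(-1)^n$ sign-tracking in (\ref{item:C_n_n-second}) explicit.
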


\begin{proof}
We split the proof in four steps.

\textit{Step 1: $\mathcal{C}(n,n) \Leftrightarrow$ (\ref{item:C_n_n-first}).}
Let us assume that $\mathcal{C}(n,n)$ holds.
Then there exist a monic polynomial $P(x) \in \Rset[x]$
of degree $n$ such that $P(0) \neq 0$ and
\[
R(x) = P(x) ( P(x) - P(0) ).
\]
Thus, condition~(\ref{item:C_n_n-first}) follows from Corollary~\ref{cor:JK}.

Reciprocally, if condition~(\ref{item:C_n_n-first}) holds,
$P(x) ( P(x) -P(0) ) = x \prod_{i=1}^{2n-1} (x-\gamma_i)$,
so $\mathcal{C}(n,n)$ holds.

\textit{Step 2: (\ref{item:C_n_n-first}) $\Leftrightarrow$ (\ref{item:C_n_n-second}).}
If $P(x) = \prod_{j \in J} (x-\gamma_j)$ and
$Q(x) = x \prod_{k \in K} (x-\gamma_k)$, then
\[
Q(x) = P(x)-P(0) \Leftrightarrow
P(\gamma_k) = P(0), \quad \forall k \in K \Leftrightarrow
\textstyle \prod_{j \in J} (\gamma_j - \gamma_k) = \prod_{j \in J} \gamma_j,
\quad \forall k \in K.
\]

\textit{Step 3: (\ref{item:C_n_n-first}) $\Leftrightarrow$ (\ref{item:C_n_n-third}).}
If $P(x) = \prod_{j \in J} (x-\gamma_j)$ and
$Q(x) = x \prod_{k \in K} (x-\gamma_k)$, then
\[
Q(x) =
x^n + \sum_{l=1}^{n-1} (-1)^l \rme_l( \{\gamma_k\}_{k \in K} ) x^{n-l}, \qquad
P(x) =
x^n + \sum_{l=1}^{n-1} (-1)^l \rme_l( \{\gamma_j\}_{j \in J} ) x^{n-l} + P(0).
\]

\textit{Step 4: (\ref{item:C_n_n-third}) $\Leftrightarrow$ (\ref{item:C_n_n-fourth}).}
It follows from the Newton's identities connecting
the elementary symmetric polynomials
and the power sum symmetric polynomials; see~\cite{Mead1992}.
\end{proof}

\begin{example}\label{ex:Planar}
The quantities $\gamma_3 = 1$, $\gamma_2 = 2$, and $\gamma_1 = 3$
verify $\mathcal{C}(2,2)$, because $1 + 2 = 3$.
This means that the billiard trajectories
\begin{itemize}
\item
Inside the ellipse $Q : x^2 + 2 y^2 = 1$ with caustic parameter
$\lambda = 1/3$; or
\item
Inside the ellipse $Q : x^2 + 3 y^2 = 1$ with caustic parameter
$\lambda = 1/2$;
\end{itemize}
are periodic with elliptic periodic $m = 2$.
Their caustic types are $\varsigma = 0$ and $\varsigma = 1$, respectively.
\end{example}

\begin{example}\label{ex:Spatial}
The quantities $\gamma_5 = 1$, $\gamma_4 = 2$, $\gamma_3 = 4$,
$\gamma_2 = 5$, and $\gamma_1 = 6$ verify $\mathcal{C}(3,3)$,
because $1 + 2 + 6 = 4 + 5$ and $1^2 + 2^2 + 6^2 = 4^2 + 5^2$.
Hence, the billiard trajectories
\begin{itemize}
\item
Inside the ellipsoid $Q : x^2 + 2 y^2 + 5 z^2 = 1$ with caustic parameters
$\lambda_1 = \frac{1}{6}$ and $\lambda_2 = \frac{1}{4}$; or
\item
Inside the ellipsoid $Q : x^2 + 2 y^2 + 6 z^2 = 1$ with caustic parameters
$\lambda_1 = \frac{1}{5}$ and $\lambda_2 = \frac{1}{4}$; or
\item
Inside the ellipsoid $Q : x^2 + 4 y^2 + 5 z^2 = 1$ with caustic parameters
$\lambda_1 = \frac{1}{6}$ and $\lambda_2 = \frac{1}{2}$; or
\item
Inside the ellipsoid $Q : x^2 + 4 y^2 + 6 z^2 = 1$ with caustic parameters
$\lambda_1 = \frac{1}{5}$ and $\lambda_2 = \frac{1}{2}$;
\end{itemize}
are periodic with elliptic periodic $m = 3$.
Their caustic types are $\varsigma = (0,1)$, $\varsigma = (1,1)$,
$\varsigma = (0,2)$, and $\varsigma = (1,2)$, respectively.
We recall that these caustic types were denoted EH1, H1H1, EH2, and H1H2
in the introduction.
\end{example}

Let us compare the system of homogeneous symmetric polynomial
equations~(\ref{eq:PolynomialSystem}),
which was obtained directly from the matrix formulation,
with the system of homogeneous non-symmetric polynomial equations
$\sum_{j \in J} \gamma_j^l = \sum_{k \in K} \gamma_k^l$,
$1 \le l \le n-1$, obtained in the previous proposition.
We are dealing with the case $m = n$,
so the $l$-th equation of the former system has degree $n+l$,
whereas the $l$-th equation of the new system has degree $l$.
Besides, the new system has a remarkably simple closed expression.
This shows that the polynomial formulation
simplifies the problem.

The beauty of the conditions regarding the elementary symmetric polynomials
and the power sum symmetric polynomials given in Proposition~\ref{pro:C_n_n}
has been the motivation for the introduction of the inverse quantities
$\gamma_i = 1/c_i$.
Nevertheless, we find useful to state the following result
in terms of the ellipsoidal parameters $a_j$,
in order to answer some questions about the nonsingular periodic billiard
trajectories found in the first item of Theorem~\ref{thm:Samples}.

\begin{thm}\label{thm:Main}
There exist nonsingular periodic billiard trajectories inside
the ellipsoid~(\ref{eq:Ellipsoid}) with elliptic period $m = n$ and
caustic type
\begin{equation}\label{eq:CausticType}
\varsigma =
\left\{
\begin{array}{ll}
(1,1,3,3,\ldots,n-2,n-2)   & \mbox{for $n$ odd} \\
(0,2,2,4,4,\ldots,n-2,n-2) & \mbox{for $n$ even}
\end{array}
\right.
\end{equation}
if and only if all the roots of $t^n - \prod_{j=1}^n (t-a_j)$ are real
and simple.
These periodic billiard trajectories
have the roots of $t^n - \prod_{j=1}^n (t-a_j)$ as caustic parameters,
period $m_0 = 2n$, and even winding numbers $m_0,\ldots,m_{n-1}$.
Indeed,
\begin{equation}\label{eq:WindingNumbers_C_n_n}
m_j = 2 \widetilde{m}_j = 2(n-j), \qquad j = 0,\ldots,n-1,
\end{equation}
provided Conjecture~\ref{con:WindingNumbers} on the strict decreasing ordering
of winding numbers holds.
\end{thm}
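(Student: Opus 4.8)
The plan is to establish Theorem~\ref{thm:Main} in three logically independent pieces: the biconditional about existence, the identification of the caustic parameters, and the computation of the winding numbers. I would begin with the biconditional. By Theorem~\ref{thm:Samples} (case $m=n$), if the roots of $t^n - \prod_{j=1}^n(t-a_j)$ are the caustic parameters, then the trajectories have elliptic period $m=n$; so the substantive work is to show that these roots are always real and simple precisely when such a nonsingular trajectory with the prescribed caustic type exists, and to pin down the caustic type. The cleanest route is to invoke Proposition~\ref{pro:C_n_n}(\ref{item:C_n_n-first}): $\mathcal{C}(n,n)$ is equivalent to the statement that the degree-$n$ monic polynomial $P(x)=\prod_{j\in J}(x-\gamma_j)$ satisfies $P(x)-P(0)=x\prod_{k\in K}(x-\gamma_k)$, where $\{1,\dots,2n-1\}=J\cup K$ is the fixed Jacobi-type decomposition of Corollary~\ref{cor:JK}. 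The point is that this decomposition, in the original variables $c_i=1/\gamma_i$, splits the combined list $\{a_1,\dots,a_n\}\cup\{\lambda_1,\dots,\lambda_{n-1}\}$ into those $c_i$ that are ellipsoidal parameters and those that are caustic parameters, and I must show that this split, dictated by the ordering in Lemma~\ref{lem:Ordering}, forces exactly the caustic type in~(\ref{eq:CausticType}).

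Next I would carry out the caustic-parameter identification. The identity $t^n-\prod_{j=1}^n(t-a_j)=\kappa\prod_{k=1}^{n-1}(t-\lambda_k)$ from the proof of the $m=n$ case of Theorem~\ref{thm:Samples} shows directly that, whenever these trajectories exist, the caustic parameters are the roots of $t^n-\prod_{j=1}^n(t-a_j)$. Conversely, the existence condition~(\ref{eq:ExistenceCondition}) of Proposition~\ref{pro:Existenceconditions}, together with the requirement that the $\lambda_k$ be distinct and nonsingular, is exactly what forces the roots of $t^n-\prod_{j=1}^n(t-a_j)$ to be real and simple and to interlace the $a_j$ in the pattern encoded by~(\ref{eq:CausticType}). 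So the existence half of the biconditional reduces to a root-location argument: I would analyze the sign changes of the polynomial $t^n-\prod_{j=1}^n(t-a_j)=t^n-\prod(t-a_j)$ on the intervals $(a_{k-1},a_k)$ and verify that exactly one simple root lands in each admissible interval dictated by Definition~\ref{defi:CausticType}, separating the $n$-odd and $n$-even cases. The parity split in~(\ref{eq:CausticType}) is a direct consequence of how the leading $t^n$ term interacts with the sign of $\prod(t-a_j)$ between consecutive $a_j$.

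Finally I would compute the winding numbers. Since the caustic parameters are the roots of $t^n-\prod_{j=1}^n(t-a_j)$, each $c_i$ lies in a known subinterval, so the pairs $\{c_{2j},c_{2j+1}\}$ appearing in Theorem~\ref{thm:WindingNumbers} are determined. The parity clause of that theorem---namely that $\{c_{2j},c_{2j+1}\}\cap\{a_1,\dots,a_n\}\neq\emptyset$ forces $m_j$ even---combined with the interlacing just established shows that \emph{every} pair $\{c_{2j},c_{2j+1}\}$ contains at least one ellipsoidal parameter, hence all winding numbers $m_0,\dots,m_{n-1}$ are even. To get the exact values $m_j=2(n-j)$, I would use that the elliptic period is $m=n=m_0/d$ with $d=\gcd(m_0,\dots,m_{n-1})$; since all $m_j$ are even, $d$ is even, and assuming Conjecture~\ref{con:WindingNumbers}'s strict ordering $2\le m_{n-1}<\cdots<m_0$, the only even sequence of length $n$ that is strictly decreasing with $m_0=2n$ and bottom value $m_{n-1}=2$ is $m_j=2(n-j)$; this then gives $d=2$, $\widetilde{m}_j=n-j$, and $m_0=2n$ consistently. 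The main obstacle I anticipate is the careful bookkeeping of the interlacing pattern in the two parity cases: translating the abstract ordering of Lemma~\ref{lem:Ordering} back through $\gamma_i=1/c_i$ into a statement about which $a_j$ and which $\lambda_k$ occupy each slot, and thereby reading off~(\ref{eq:CausticType}) and the pairing $\{c_{2j},c_{2j+1}\}$, is delicate and is where a sign or index error would most easily creep in.
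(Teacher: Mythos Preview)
Your overall architecture---use Proposition~\ref{pro:C_n_n} to characterise $\mathcal{C}(n,n)$, read off the caustic type from the interlacing of the $\gamma_i$, and then invoke the parity clause of Theorem~\ref{thm:WindingNumbers} together with Conjecture~\ref{con:WindingNumbers} for the winding numbers---is exactly the paper's approach. Two specific points, however, need repair.

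First, the root-location step as you describe it does not work. You propose to ``analyze the sign changes of the polynomial $t^n-\prod_{j=1}^n(t-a_j)$ on the intervals $(a_{k-1},a_k)$'' and conclude that ``exactly one simple root lands in each admissible interval''. But $q(t)=t^n-\prod_j(t-a_j)$ satisfies $q(a_j)=a_j^n>0$ for \emph{every} $j$, so there are no sign changes between consecutive $a_j$'s; and the target caustic type~(\ref{eq:CausticType}) in fact places the $\lambda_k$ in \emph{pairs} inside every second interval, not one per interval. The paper avoids this by passing to the reciprocal variable: with $\alpha_j=1/a_j$ and $P(x)=\prod_{j=1}^n(x-\alpha_j)$, one checks that $\lambda_k$ is a root of $q$ if and only if $\beta_k=1/\lambda_k$ is a nonzero root of $P(x)-P(0)$. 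Now Lemma~\ref{lem:Ordering} applies verbatim to $P$ and delivers the precise interlacing of the $\alpha_j$ and $\beta_k$, which under $c_i\mapsto 1/c_i$ becomes exactly the caustic type~(\ref{eq:CausticType}) and simultaneously verifies the existence conditions~(\ref{eq:ExistenceCondition}). You do mention translating Lemma~\ref{lem:Ordering} through $\gamma_i=1/c_i$ in your final paragraph; that is the argument you should commit to, and the direct sign-change plan should be dropped.

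Second, in the winding-number step you deduce that $d=\gcd(m_0,\dots,m_{n-1})$ is even and then jump to $m_0=2n$. To close this you must also invoke the clause $\gcd(m_0,\dots,m_{n-1})\in\{1,2\}$ from Theorem~\ref{thm:WindingNumbers}, which forces $d=2$ and hence $m_0=2m=2n$; only then does Conjecture~\ref{con:WindingNumbers} pin down the unique strictly decreasing even sequence $m_j=2(n-j)$.
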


\begin{proof}
Let us assume that there exist nonsingular periodic billiard trajectories
with elliptic period $n$ and caustic type~(\ref{eq:CausticType}).
By definition of caustic type,
the caustic parameters $\lambda_1 < \cdots < \lambda_{n-1}$ of such trajectories
verify that
\begin{itemize}
\item
If $n$ is odd, then $\lambda_{2l-1},\lambda_{2l} \in (a_{2l-1},a_{2l})$,
for $l = 1,\ldots,(n-1)/2$;
\item
If $n$ is even, then $\lambda_1 \in (0, a_1)$, and
$\lambda_{2l},\lambda_{2l+1} \in (a_{2l},a_{2l+1})$,
for $l = 1,\ldots,(n-2)/2$.
\end{itemize}
Hence, we can split the set
$\{ \gamma_i = 1/c_i : i = 1,\ldots,2n-1 \}$ as
the disjoint union of the sets
\[
\{ \gamma_j : j \in J \} = \{ 1/a_1,\ldots,1/a_n \},\qquad
\{ \gamma_k : k \in K \} = \{ 1/\lambda_1,\ldots,1/\lambda_{n-1} \},
\]
where $\{1,\ldots,2n-1 \} = J \cup K$ is the decomposition described
in Corollary~\ref{cor:JK}.
Thus, we know from condition~\ref{item:C_n_n-fourth}
of Proposition~\ref{pro:C_n_n} that
\[
\prod_{j=1}^n \left(\frac{1}{a_j}-\frac{1}{\lambda_k}\right) =
\prod_{j=1}^n \frac{1}{a_j},\qquad
k = 1,\ldots,n-1.
\]
This identity can be written as
$\lambda_k^n = \prod_{j=1}^n (\lambda_k - a_j)$,
for all $k = 1,\ldots,n-1$, which implies that the
caustic parameters $\lambda_1,\ldots,\lambda_{n-1}$ are the roots
of $t^n - \prod_{j=1}^n (t-a_j)$.

Reciprocally, let us assume that the roots of
$q(t) = t^n - \prod_{j=1}^n (t-a_j)$ are real and simple.
Let $\lambda_1 < \cdots < \lambda_{n-1}$ be these roots.
None of them is zero, since $q(0) \neq 0$.
Besides,
$\lambda_k$ is a root of $q(t)$ if and only if
$\beta_k := 1/\lambda_k \neq 0$ is a root of
\[
Q(x) := \frac{(-1)^{n-1} x^n q(1/x)}{a_1 \cdots a_n} = P(x) - P(0),
\]
where $P(x) = \prod_{j=1}^n (x - \alpha_j)$ with $\alpha_j = 1/a_j$.
Therefore, the roots $\alpha_j = 1/a_j$ and $\beta_k = 1/\lambda_k$
are ordered as stated in Lemma~\ref{lem:Ordering}.
The consequences are two-fold.
On the one hand,
$\lambda_k \in (a_{\varsigma_k}, a_{\varsigma_k + 1})$,
where $\varsigma = (\varsigma_1,\ldots,\varsigma_{n-1})$
is the caustic type given in~(\ref{eq:CausticType}).
On the other hand,
there exist nonsingular billiard trajectories inside the ellipsoid $Q$
sharing the caustics $Q_{\lambda_1},\ldots,Q_{\lambda_{n-1}}$,
because the existence conditions~(\ref{eq:ExistenceCondition}) hold.
Thus, the trajectories sharing the caustics
$Q_{\lambda_1},\ldots,Q_{\lambda_{n-1}}$ are periodic with elliptic period $n$
and caustic type $\varsigma$,
since the generalized Cayley condition $\mathcal{C}(n,n)$ holds;
see Proposition~\ref{pro:C_n_n}.

Next, we prove the claims on the (Cartesian) period and the winding numbers.
The caustic parameters are located in the intervals delimited by
the ellipsoidal parameters given at the beginning of the proof,
which implies that
\[
\{c_{2j},c_{2j+1}\} \cap \{a_1,\ldots,a_n\} \neq \emptyset,
\qquad j = 0,\ldots,n-1,
\]
where $c_0 := 0 < c_1 < \cdots < c_{2n-1}$ are defined in~(\ref{eq:c}).
Thus, all winding numbers are even ---see Theorem~\ref{thm:WindingNumbers}---
and so, by definition of elliptic period, $m_0 = 2m = 2n$.

Finally, let us assume that winding numbers are ordered
as stated in Conjecture~\ref{con:WindingNumbers},
so $2 \le m_{n-1} < \cdots < m_0 = 2n$
with $m_0,\ldots,m_{n-1}$ even.
Then $m_j = 2\widetilde{m}_j = 2(n-j)$.
\end{proof}

\begin{remark}
If all the roots of $t^n - \prod_{j=1}^n(t-a_j)$ are real,
but some of them are double,
then we get singular periodic billiard trajectories.
In that case, there are only two possible scenarios.
Either $n$ is odd and $\lambda_{2l-1} = \lambda_{2l}$
for some $l=1,\ldots,(n-1)/2$; or $n$ is even and
$\lambda_{2l} = \lambda_{2l+1}$ for some $l=1,\ldots,(n-2)/2$.
In all these cases,
the singular periodic trajectories are formed by segments contained
in some nonsingular ruled confocal quadrics.
\end{remark}

\begin{remark}
All the periodic billiard trajectories mentioned in Theorem~\ref{thm:Main}
have caustic type~(\ref{eq:CausticType}).
One may establish similar theorems for other caustic types.
For instance,
the versions EH1, H1H1, EH2, and H1H2 of Theorem~\ref{thm:Main}
in the spatial case will be listed in Table~\ref{tab:Spatial}.
\end{remark}

\section{Cayley conditions in the general case}
\label{sec:General}

Once we have understood the minimal case $m = n$,
we tackle the general case $m \ge n$.

Let us explain the fundamental question by means of an example.
In Section~\ref{sec:Practical} we saw that condition $\mathcal{C}(3,2)$
becomes a single homogeneous symmetric polynomial equation of degree eight
in the variables $\gamma_1,\gamma_2,\gamma_3$,
with only two feasible solutions; namely, the ones given in~(\ref{eq:C32}).
Thus, it is natural to ask whether can we rewrite $\mathcal{C}(3,2)$
as a set of two simpler conditions such that each one of them gives rise to
one of the solutions given in~(\ref{eq:C32}).

By the way, we raise this question for any $m \ge n$.
Can we rewrite $\mathcal{C}(m,n)$ as a set of ``simpler'' conditions such that
each one of them gives rise to just ``one'' solution of $\mathcal{C}(m,n)$?
We answer this question in the affirmative.
Indeed, we parametrize these ``simpler'' conditions by
the elements of the set
\[
\mathcal{T}(m,n) =
\big\{
(\tau_1,\ldots,\tau_n) \in \Zset^n :
\tau_1 + \cdots + \tau_n = m-n,\; \tau_1,\ldots,\tau_n \ge 0
\big\}.
\]
The cardinal of $\mathcal{T}(m,n)$ is the number of monomials
of degree $m-n$ in $n$ variables.
Thus, $\# \mathcal{T}(m,n) = {m-1 \choose n-1}$,
which gives a precise estimate of the complexity of the
Cayley condition $\mathcal{C}(m,n)$ when $m$ grows.
We will refer to the elements of $\mathcal{T}(m,n)$ as \emph{signatures}.
We set $\gamma_{2n} = 0$ in order to simplify some notations.

\begin{defi}\label{defi:C_m_n_tau}
Given any signature $\tau = (\tau_1,\ldots,\tau_n) \in \mathcal{T}(m,n)$,
we say that condition $\mathcal{C}(m,n;\tau)$ holds if and only if
there exist two monic polynomials $S(x),P(x) \in \Rset[x]$ such that
$\deg[S] = m-n$, $\deg[P] = m$, $P(0) \neq 0$,
$S^2(x) R(x) = P(x) ( P(x) - P(0))$, and
$S(x)$ has $m-n$ simple real roots $\delta_{m-n} < \cdots < \delta_1$ such that
\begin{equation}\label{eq:tau}
\# \big (
\{\delta_1,\ldots,\delta_{m-n} \} \cap (\gamma_{2r},\gamma_{2r-1})
\big ) = \tau_r,
\qquad r = 1,\ldots,n.
\end{equation}
\end{defi}

\begin{cor}\label{cor:C_m_n_tau}
If there exists $\tau  \in \mathcal{T}(m,n)$ such that
$\mathcal{C}(m,n;\tau)$ holds, then $\mathcal{C}(m,n)$ also holds.
The reciprocal implication is true for $m \le n+3$
(or provided Conjecture~\ref{con:Roots_of_S} holds).
\end{cor}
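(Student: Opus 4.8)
The plan is to read both implications directly off Proposition~\ref{pro:Polynomial} together with its four auxiliary properties. The forward implication is immediate: the data producing $\mathcal{C}(m,n;\tau)$ in Definition~\ref{defi:C_m_n_tau} already include monic polynomials $S(x),P(x)$ with $\deg[S]=m-n$, $\deg[P]=m$, $P(0)\neq0$ and $S^2(x)R(x)=P(x)(P(x)-P(0))$, which is exactly the characterization of $\mathcal{C}(m,n)$ supplied by Proposition~\ref{pro:Polynomial}. Hence $\mathcal{C}(m,n;\tau)$ forces $\mathcal{C}(m,n)$ for every signature, and in particular whenever $\mathcal{C}(m,n;\tau)$ holds for some $\tau\in\mathcal{T}(m,n)$.

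For the reciprocal I would assume $\mathcal{C}(m,n)$ and invoke Proposition~\ref{pro:Polynomial} to obtain such $S(x)$ and $P(x)$. By its first property $S(x)$ has no multiple roots, so its $m-n$ roots are simple; by its third property all of them are real when $m\le n+3$, while under Conjecture~\ref{con:Roots_of_S} the same reality conclusion holds by hypothesis. In either case $S(x)$ has $m-n$ simple real roots $\delta_{m-n}<\cdots<\delta_1$, and the task reduces to locating them. By the second property of Proposition~\ref{pro:Polynomial} every real root of $S(x)$ lies in $\{x\in\Rset:R(x)<0\}$, so the key step is to identify this set explicitly.

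Recalling $R(x)=x\prod_{i=1}^{2n-1}(x-\gamma_i)$ with $0=\gamma_{2n}<\gamma_{2n-1}<\cdots<\gamma_1$, I would carry out a short sign count. For $x<0$ all $2n$ factors are negative, so $R(x)>0$; and on $(\gamma_1,\infty)$ all factors are positive, so $R(x)>0$ there too. Crossing each simple root flips the sign, whence $R(x)<0$ precisely on the $n$ disjoint open intervals $(\gamma_{2r},\gamma_{2r-1})$, $r=1,\ldots,n$. In particular $\{x\in\Rset:R(x)<0\}=\bigcup_{r=1}^n(\gamma_{2r},\gamma_{2r-1})$, and since the endpoints are roots of $R$ they cannot be roots of $S$, so each $\delta_i$ sits strictly inside exactly one such interval.

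It then suffices to set $\tau_r:=\#\big(\{\delta_1,\ldots,\delta_{m-n}\}\cap(\gamma_{2r},\gamma_{2r-1})\big)$ for $r=1,\ldots,n$. Each $\tau_r\ge0$, and $\tau_1+\cdots+\tau_n=m-n$ because the intervals are disjoint and collectively contain all $m-n$ roots; hence $\tau=(\tau_1,\ldots,\tau_n)\in\mathcal{T}(m,n)$. The same $S(x),P(x)$ now witness $\mathcal{C}(m,n;\tau)$ through Definition~\ref{defi:C_m_n_tau}, which closes the reciprocal. The only genuinely restrictive point is the reality of all roots of $S(x)$: this is exactly why the reciprocal is asserted solely for $m\le n+3$, where the third property of Proposition~\ref{pro:Polynomial} guarantees it, or under Conjecture~\ref{con:Roots_of_S}; everything else is unconditional bookkeeping of signs and intervals.
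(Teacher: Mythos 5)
Your proof is correct and follows essentially the same route as the paper: the forward implication is read off the definitions, and the reciprocal uses exactly the three relevant properties of Proposition~\ref{pro:Polynomial} (simplicity of the roots of $S$, their confinement to $\{R<0\}=\bigcup_{r=1}^n(\gamma_{2r},\gamma_{2r-1})$, and their reality for $m\le n+3$ or under Conjecture~\ref{con:Roots_of_S}) to manufacture the signature $\tau$. The paper states this in one sentence; you have merely spelled out the sign count and the bookkeeping, which is fine.
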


\begin{proof}
The first implication is obvious.
For the reciprocal implication,
we simply recall that $S(x)$ has only real roots when $m \le n+3$
and all its real roots are contained in
$\{ x \in \Rset : R(x) < 0 \} = \bigcup_{r=1}^n (\gamma_{2r},\gamma_{2r-1})$;
see Proposition~\ref{pro:Polynomial}.
\end{proof}

\begin{defi}\label{defi:Decompositions}
Given any signature $\tau \in \mathcal{T}(m,n)$,
let $\{1,\ldots,2n-1\} = J_\tau \cup K_\tau$ and
$\{1,\ldots,m-n\} = V_\tau \cup W_\tau$ be the decompositions determined
as follows.
If $\delta_{m-n} < \cdots < \delta_1$ is any ordered sequence
verifying~(\ref{eq:tau}), then the elements of the multisets
\begin{eqnarray*}
\{ \alpha_1,\ldots,\alpha_m \} & = &
\{ \gamma_j : j \in J_\tau \} \cup \{ \delta_v, \delta_v : v \in V_\tau \}, \\
\{ \beta_1,\ldots,\beta_{m-1} \} & = &
\{ \gamma_k : k \in K_\tau \} \cup \{ \delta_w, \delta_w : w \in W_\tau \},
\end{eqnarray*}
are ordered as in Lemma~\ref{lem:Ordering}.
\end{defi}

Multisets are a generalization of sets in which members are allowed
to appear more than once; see~\cite{Knuth1998}.
In our case, the numbers $\delta_1,\ldots,\delta_{m-n}$ appear twice.

These decompositions are well-defined.
That is, they only depend on the signature $\tau$,
since any ordered sequence $\delta_{m-n} < \cdots < \delta_1$
verifying~(\ref{eq:tau}) gives rise to the same decomposition.
The decomposition $\{1,\ldots,2n-1\} = J_n \cup K_n$ given in
Corollary~\ref{cor:JK} correspond to the trivial signature
$\tau = (0,\ldots,0) \in \mathcal{T}(n,n)$.

Next, we generalize Corollary~\ref{cor:JK}
and Proposition~\ref{pro:C_n_n} to the case $m \ge n$.

\begin{cor}\label{cor:JKVW}
Let $\delta_{m-n} < \cdots < \delta_1$ be an ordered sequence
verifying~(\ref{eq:tau}) for some signature $\tau \in \mathcal{T}(m,n)$.
If $P(x)$ is a monic polynomial of degree $m$ such that
$P(0) \neq 0$ and
\[
\textstyle
\prod_{u=1}^{m-n} (x-\delta_u)^2 \cdot R(x) =
P(x) (P(x) - P(0)),
\]
then
\(
P(x) =
\prod_{j \in J_\tau} (x-\gamma_j) \prod_{v \in V_\tau} (x-\delta_v)^2 =
P(0) + x \prod_{k \in K_\tau} (x-\gamma_k) \prod_{w \in W_\tau} (x-\delta_w)^2
\).
\end{cor}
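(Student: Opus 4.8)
The plan is to establish Corollary~\ref{cor:JKVW} by reducing it to the already-proven Corollary~\ref{cor:JK} via a factorization argument. The key observation is that the polynomial $S(x) := \prod_{u=1}^{m-n}(x-\delta_u)$ divides both sides of the identity $S^2(x) R(x) = P(x)(P(x)-P(0))$, and since $S(x)$ has only simple real roots lying in $\{x : R(x) < 0\}$ (by Proposition~\ref{pro:Polynomial}, none of which is a root of $R$), each factor $(x-\delta_u)^2$ must divide the product $P(x)(P(x)-P(0))$. The first step is therefore to argue that for each $u$, the double root $\delta_u$ is distributed entirely into exactly one of the two factors $P(x)$ or $P(x)-P(0)$, and not split between them: this is precisely property~(4) of Proposition~\ref{pro:Polynomial}, which guarantees that $P$ and $P-P(0)$ share no common root, so $(x-\delta_u)^2$ divides one factor or the other. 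This dichotomy is exactly what defines the decomposition $\{1,\ldots,m-n\} = V_\tau \cup W_\tau$.

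First I would make the heuristic of Definition~\ref{defi:Decompositions} rigorous: each $\delta_u$ belongs to $V_\tau$ (if $(x-\delta_u)^2 \mid P(x)$) or to $W_\tau$ (if $(x-\delta_u)^2 \mid P(x)-P(0)$). After stripping these double factors, I would write $P(x) = \widetilde{P}(x)\prod_{v \in V_\tau}(x-\delta_v)^2$ and $P(x)-P(0) = \widetilde{Q}(x)\prod_{w \in W_\tau}(x-\delta_w)^2$ for some monic polynomials $\widetilde{P}, \widetilde{Q}$ of degree $n$ and $n-1$ respectively. Substituting into $S^2 R = P(P-P(0))$ and cancelling the common factor $\prod_u (x-\delta_u)^2$ leaves $R(x) = \widetilde{P}(x)\widetilde{Q}(x)$. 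The crucial subtlety here is that cancellation is legitimate because $\{\delta_u\}$ are simple roots of $S$ and none coincides with a $\gamma_i$ (again by Proposition~\ref{pro:Polynomial}, since $R(\delta_u)<0$), so the double factors appear with matched multiplicity on both sides and no spurious identification of roots occurs.

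The second step is to identify $\widetilde{P}$ with a polynomial of the form $\prod_{j \in J}(x-\gamma_j)$ and $\widetilde{Q}$ with $x\prod_{k\in K}(x-\gamma_k)$. The natural route is to verify that $\widetilde{P}(x)$ and $\widetilde{Q}(x)$ satisfy $\widetilde{Q}(x) = \widetilde{P}(x) - \widetilde{P}(0)$: indeed $P(0) = \widetilde{P}(0)\prod_{v}\delta_v^2$, and evaluating the factored forms at a root $\gamma_k$ of $\widetilde{Q}$ should confirm $\widetilde{P}(\gamma_k) = \widetilde{P}(0)$, mirroring Step~2 of Proposition~\ref{pro:C_n_n}. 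Once $R = \widetilde{P}(\widetilde{P}-\widetilde{P}(0))$ with $\widetilde{P}$ monic of degree $n$ and $\widetilde{P}(0)\neq 0$, Corollary~\ref{cor:JK} applies verbatim and forces $\widetilde{P}(x) = \prod_{j \in J_\tau}(x-\gamma_j)$ and $\widetilde{P}(x)-\widetilde{P}(0) = x\prod_{k \in K_\tau}(x-\gamma_k)$, where the labelling $J_\tau, K_\tau$ is inherited from the ordering of Lemma~\ref{lem:Ordering}.

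The main obstacle I anticipate is the bookkeeping needed to show that the decomposition $\{1,\ldots,2n-1\}=J_\tau \cup K_\tau$ and $\{1,\ldots,m-n\}=V_\tau\cup W_\tau$ produced by this factorization coincides with the one fixed by Definition~\ref{defi:Decompositions} through the ordering of Lemma~\ref{lem:Ordering}. One must check that interlacing the double roots $\delta_v, \delta_w$ with the simple roots $\gamma_j, \gamma_k$ in the prescribed order is consistent with which factor each double root landed in, and that this assignment is independent of the particular sequence $\delta_{m-n}<\cdots<\delta_1$ chosen. I expect this to follow from the rigidity of the interlacing pattern in Lemma~\ref{lem:Ordering} together with the fact that the multiset of roots of $P$ and $P-P(0)$ is uniquely determined by $S$ and $R$, but the reconciliation of the two labelling conventions is the delicate point rather than the algebra itself.
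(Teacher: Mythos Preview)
Your reduction to Corollary~\ref{cor:JK} breaks at the second step. After stripping the double factors you get $\widetilde P(x)=\prod_{j\in J'}(x-\gamma_j)$ and $\widetilde Q(x)=x\prod_{k\in K'}(x-\gamma_k)$ with $R(x)=\widetilde P(x)\,\widetilde Q(x)$, but this last identity holds trivially for \emph{any} partition $J'\cup K'=\{1,\dots,2n-1\}$ and carries no information. The identity you actually need in order to invoke Corollary~\ref{cor:JK}, namely $\widetilde Q=\widetilde P-\widetilde P(0)$, is false in general, and $\deg\widetilde P=m-2|V'|$ need not equal $n$. Concretely, for $m=3$, $n=2$, $\tau=(0,1)$ one has $J_\tau=\{1\}$, $K_\tau=\{2,3\}$, $V_\tau=\{1\}$, $W_\tau=\emptyset$, so $P(x)=(x-\gamma_1)(x-\delta_1)^2$ and $P(x)-P(0)=x(x-\gamma_2)(x-\gamma_3)$; then $\widetilde P(x)=x-\gamma_1$ has degree $1\neq n$, and $\widetilde P(x)-\widetilde P(0)=x\neq x(x-\gamma_2)(x-\gamma_3)=\widetilde Q(x)$. (Incidentally, the coprimality of $P$ and $P-P(0)$ is not property~(4) of Proposition~\ref{pro:Polynomial}; it follows immediately from $P(0)\neq 0$.)

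The paper does not reduce to the minimal case at all: it applies Lemma~\ref{lem:Ordering} directly to the degree-$m$ polynomial $P$. Since $\gcd(P,P-P(0))=1$, each $(x-\delta_u)^2$ divides exactly one of the two factors, giving some decompositions $J',K',V',W'$ with
\[
P(x)=\prod_{j\in J'}(x-\gamma_j)\prod_{v\in V'}(x-\delta_v)^2,\qquad
P(x)-P(0)=x\prod_{k\in K'}(x-\gamma_k)\prod_{w\in W'}(x-\delta_w)^2.
\]
The roots of $P(x)\big(P(x)-P(0)\big)$ are then positive (except the simple one at $0$) and of multiplicity at most two, so Lemma~\ref{lem:Ordering} applies to $P$ itself and forces the full multisets of $\alpha$'s and $\beta$'s into its interlacing pattern. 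Definition~\ref{defi:Decompositions} was set up precisely so that $J_\tau,K_\tau,V_\tau,W_\tau$ are the unique decompositions compatible with that pattern, whence $J'=J_\tau$, $K'=K_\tau$, $V'=V_\tau$, $W'=W_\tau$.
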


\begin{proof}
There exist two decompositions $\{1,\ldots,2n-1\} = J' \cup K'$
and $\{1,\ldots,m-n\} = V' \cup W'$ such that
$P(x) =
 \prod_{j \in J'} (x-\gamma_j) \prod_{v \in V'} (x-\delta_v)^2 =
 P(0) + x \prod_{k \in K'} (x-\gamma_k) \prod_{w \in W'} (x-\delta_w)^2$.
The polynomial $P(x)$ verifies the hypotheses stated in Lemma~\ref{lem:Ordering},
so the roots
\begin{eqnarray*}
\{ \alpha_1, \ldots, \alpha_m \} & = &
\{ \gamma_j : j \in J' \} \cup \{ \delta_v, \delta_v : v \in V' \} \\
\{ \beta_1, \ldots, \beta_{m-1} \} & = &
\{ \gamma_k : k \in K' \} \cup \{ \delta_w, \delta_w : w \in W' \}
\end{eqnarray*}
obey the ordering described in that lemma.
Hence, $J' = J_\tau$, $K' = K_\tau$, $V' = V_\tau$, and $W' = W_\tau$.
\end{proof}

\begin{pro}\label{pro:C_m_n_tau}
Condition $\mathcal{C}(m,n;\tau)$ holds if and only if there exists a
sequence $\delta_{m-n} < \cdots < \delta_1$ verifying~(\ref{eq:tau})
such that the following three equivalent properties hold:
\begin{enumerate}[(i)]
\item
If
$P(x) = \prod_{j \in J_\tau} (x-\gamma_j) \prod_{v \in V_\tau} (x-\delta_v)^2$, then
$P(x) - P(0) = x \prod_{k \in K_\tau } (x-\gamma_k) \prod_{w \in W_\tau} (x-\delta_w)^2$.
\item
$\rme_l \big( \{\gamma_j\}_{j \in J_\tau} \cup \{ \delta_v,\delta_v \}_{v \in V_\tau} \big) =
 \rme_l \big( \{\gamma_k \}_{k \in K_\tau} \cup \{ \delta_w,\delta_w \}_{w \in W_\tau} \big)$,
for all $l=1,\ldots,m-1$.
\item
$\sum_{j \in J_\tau} \gamma_j^l + 2 \sum_{v \in V_\tau} \delta_v^l =
 \sum_{k \in K_\tau} \gamma_k^l + 2 \sum_{w \in W_\tau} \delta_w^l$,
for all $l = 1,\ldots,m-1$.
\end{enumerate}
\end{pro}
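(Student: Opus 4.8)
The plan is to prove Proposition~\ref{pro:C_m_n_tau} by directly generalizing the argument used for Proposition~\ref{pro:C_n_n}, exploiting the machinery already assembled in Corollary~\ref{cor:JKVW}. The structure mirrors the earlier proof: first establish the equivalence between condition $\mathcal{C}(m,n;\tau)$ and property~(i), and then chain the three listed properties together via elementary symmetric polynomials and Newton's identities.

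First I would handle $\mathcal{C}(m,n;\tau) \Leftrightarrow$ (i). Assume $\mathcal{C}(m,n;\tau)$ holds. By Definition~\ref{defi:C_m_n_tau}, there are monic polynomials $S(x),P(x)$ with $\deg[S]=m-n$, $\deg[P]=m$, $P(0)\neq 0$, satisfying $S^2(x)R(x)=P(x)(P(x)-P(0))$, where $S(x)$ has simple real roots $\delta_{m-n}<\cdots<\delta_1$ obeying~(\ref{eq:tau}). Since $S(x)=\prod_{u=1}^{m-n}(x-\delta_u)$, we have $S^2(x)R(x)=\prod_{u=1}^{m-n}(x-\delta_u)^2 \cdot R(x)$, so the hypotheses of Corollary~\ref{cor:JKVW} are met. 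That corollary immediately yields the two factorizations of $P(x)$ that constitute property~(i). Conversely, if some ordered sequence $\delta_{m-n}<\cdots<\delta_1$ verifying~(\ref{eq:tau}) gives a $P(x)$ as in~(i), then setting $S(x)=\prod_{u=1}^{m-n}(x-\delta_u)$ produces $S^2(x)R(x)=P(x)(P(x)-P(0))$ with $S$ having the prescribed simple real roots satisfying~(\ref{eq:tau}), so $\mathcal{C}(m,n;\tau)$ holds by definition.

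Next I would show (i) $\Leftrightarrow$ (ii). Writing $P(x)=\prod_{j\in J_\tau}(x-\gamma_j)\prod_{v\in V_\tau}(x-\delta_v)^2$ and $Q(x)=x\prod_{k\in K_\tau}(x-\gamma_k)\prod_{w\in W_\tau}(x-\delta_w)^2$, both are monic of degree $m$ and agree in their leading coefficient. The multiset of roots of $P(x)$ is $\{\gamma_j\}_{j\in J_\tau}\cup\{\delta_v,\delta_v\}_{v\in V_\tau}$ and the multiset of nonzero roots of $Q(x)$ is $\{\gamma_k\}_{k\in K_\tau}\cup\{\delta_w,\delta_w\}_{w\in W_\tau}$, while $Q$ carries an extra root at $x=0$. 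Comparing $P(x)-P(0)$ with $Q(x)$: both are monic of degree $m$ with constant term zero, so $P(x)-P(0)=Q(x)$ forces equality of all intermediate coefficients, i.e.\ equality of $\rme_l$ over the two multisets for $l=1,\ldots,m-1$; and conversely equality of those symmetric functions pins down all but the top and constant coefficients, which match by construction, giving $P(x)-P(0)=Q(x)$. Finally, (ii) $\Leftrightarrow$ (iii) follows verbatim from Newton's identities relating elementary symmetric polynomials to power sums, exactly as in Step~4 of Proposition~\ref{pro:C_n_n}; see~\cite{Mead1992}.

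The only genuinely delicate point is the well-definedness invoked implicitly throughout: the decompositions $J_\tau,K_\tau,V_\tau,W_\tau$ must depend on $\tau$ alone and not on the particular sequence $\delta_{m-n}<\cdots<\delta_1$ chosen. I expect this to be the main obstacle, but it has already been dispatched in the discussion immediately following Definition~\ref{defi:Decompositions}, where it is argued that any ordered sequence verifying~(\ref{eq:tau}) yields the same decomposition via the ordering of Lemma~\ref{lem:Ordering}. Consequently the phrase ``there exists a sequence verifying~(\ref{eq:tau}) such that\ldots'' in the statement is compatible with the fixed decompositions, and the three properties are genuinely equivalent for any such sequence. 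With that in hand, the proof reduces to the bookkeeping of matching monic polynomials and their coefficients, which is routine.
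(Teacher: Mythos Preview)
Your proof is correct and follows precisely the approach indicated in the paper: the paper's own proof is the single sentence ``We simply repeat the steps of the proof of Proposition~\ref{pro:C_n_n}, but using Corollary~\ref{cor:JKVW} instead of Corollary~\ref{cor:JK},'' and your four-step argument is exactly that repetition made explicit. There is nothing to add.
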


\begin{proof}
We simply repeat the steps of the proof of Proposition~\ref{pro:C_n_n},
but using Corollary~\ref{cor:JKVW} instead of Corollary~\ref{cor:JK}.
\end{proof}

\begin{example}\label{ex:Planar2}
The quantities $\gamma_3 = 1$, $\gamma_2 = 4$, and $\gamma_1 = 9$
verify condition $\mathcal{C}(3,2;\tau)$ with $\tau = (1,0)$,
because $1 + 4 + 9 = 2 \cdot 7$, $1^2 + 4^2 + 9^2 = 2 \cdot 7^2$,
and $7 \in (4,9)$.
Hence, the billiard trajectories
\begin{itemize}
\item
Inside the ellipse $Q : x^2 + 4 y^2 = 1$ with caustic parameter
$\lambda = 1/9$; or
\item
Inside the ellipse $Q : x^2 + 9 y^2 = 1$ with caustic parameter
$\lambda = 1/4$;
\end{itemize}
are periodic with elliptic periodic $m = 3$.
Their caustic types are $\varsigma = 0$ and $\varsigma = 1$, respectively.
\end{example}

All conditions $\mathcal{C}(m,n;\tau)$, $\tau \in \mathcal{T}(m,n)$,
give rise to nonsingular periodic billiard trajectories with
elliptic period $m$,
so we wondered which is the dynamical meaning of the signature $\tau$.
We believe that there exists a one-to-one correspondence between
the elliptic winding numbers $\widetilde{m}_0,\ldots,\widetilde{m}_{n-1}$
---see Definition~\ref{defi:EllipticPeriod}---
and the signature $\tau = (\tau_1,\ldots,\tau_n)$.

\begin{con}\label{con:tau}
Set $\widetilde{m}_n = 0$. Then
$\widetilde{m}_j =  \widetilde{m}_{j+1} + \tau_{j+1} + 1$ for all $j = 0,\ldots,n-1$.
\end{con}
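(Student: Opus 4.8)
The plan is to recast the statement in the algebro-geometric language underlying the generalized Cayley conditions, where the elliptic winding numbers arise as homology (period) coordinates of the billiard shift, and to let the signature $\tau$ enter through an argument-principle count controlled by the positions of the roots of $S(x)$. Let $\Gamma$ be the smooth hyperelliptic curve $\mathcal{Y}^2 = R(x)$ of genus $n-1$, with hyperelliptic involution $\sigma:(x,\mathcal{Y})\mapsto(x,-\mathcal{Y})$ and the two points $\infty_\pm$ over $x=\infty$. Since $R$ has the $2n$ simple real roots $0=\gamma_{2n}<\gamma_{2n-1}<\cdots<\gamma_1$, its real locus consists of $n$ ovals $a_0,\dots,a_{n-1}$: the unbounded oval $a_0$ over $(\gamma_1,\infty)$ carrying $\infty_\pm$, and $a_j$ over $(\gamma_{2j+1},\gamma_{2j})$ for $1\le j\le n-1$; the gap $(\gamma_{2r},\gamma_{2r-1})$ where $R<0$ lies between $a_{r-1}$ and $a_r$, and $\tau_r$ is the number of roots of $S$ inside it. Given data $(S,P)$ realizing $\mathcal{C}(m,n;\tau)$ (Definition~\ref{defi:C_m_n_tau}), set $\Phi:=S(x)\mathcal{Y}-\bigl(P(x)-P(0)/2\bigr)$. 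From the identity $S^2R=P(P-P(0))$ of Proposition~\ref{pro:Polynomial} one computes $\Phi\cdot(\sigma\Phi)=\bigl(P-P(0)/2\bigr)^2-S^2R=P(0)^2/4\neq0$; hence $\Phi$ has no finite zeros or poles, and comparing leading behaviour at $\infty_\pm$ gives $\mathrm{div}(\Phi)=m(\infty_+-\infty_-)$, which is the torsion relation $m\,\mathcal{A}(\infty_+-\infty_-)=0$ on $\mathrm{Jac}(\Gamma)$ encoding $\mathcal{C}(m,n)$.

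Next I would invoke the linearization of the billiard map on $\mathrm{Jac}(\Gamma)$, whose per-bounce shift is $\mathcal{A}(\infty_+-\infty_-)$. By Theorem~\ref{thm:WindingNumbers} the elliptic winding number $\widetilde{m}_j$ is the number of oscillations of $\mu_j$ per elliptic period, that is, the coefficient of the real oval $a_j$ in the closed loop traced by $m$ copies of the shift. The goal is to identify $\widetilde{m}_j-\widetilde{m}_{j+1}$ with the integer $\tfrac{1}{2\pi i}\oint_{b_j}\rmd\log\Phi$, where $b_j$ links $a_j$ and $a_{j+1}$ through the gap between them ($j=0,\dots,n-2$); single-valuedness of $\Phi$ forces this period to be an integer. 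The coordinate $\mu_0$ is the exceptional impact one of Theorem~\ref{thm:WindingNumbers}, and correspondingly $\infty_\pm$ lie on $a_0$ while the residue of $\rmd\log\Phi$ at $\infty_+$ equals $m$, reproducing $\widetilde{m}_0=m$.

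The heart of the argument is the evaluation of $\oint_{b_j}\rmd\log\Phi$ by the argument principle. On the gap between $a_j$ and $a_{j+1}$ one has $R<0$, so $\mathcal{Y}$ is purely imaginary and $\mathrm{Im}\,\Phi=S(x)\sqrt{-R(x)}$, $\mathrm{Re}\,\Phi=-\bigl(P(x)-P(0)/2\bigr)$. As $x$ sweeps the gap and $b_j$ returns on the opposite sheet, $\arg\Phi$ increases by $2\pi(\tau_{j+1}+1)$: each of the $\tau_{j+1}$ roots of $S$ in the gap (the zeros of $\mathrm{Im}\,\Phi$) contributes one full turn after symmetrization over the two sheets, while the two branch points $\gamma_{2j+1},\gamma_{2j+2}$ bounding the gap, where $\mathcal{Y}=0$ and $\Phi$ is real, supply the residual unit. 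Thus $\tfrac{1}{2\pi i}\oint_{b_j}\rmd\log\Phi=\tau_{j+1}+1$; together with the residue normalization $\widetilde{m}_0=m$ and the identity $\sum_{r=1}^n\tau_r=m-n$, this telescopes (with $\widetilde{m}_n:=0$) to $\widetilde{m}_j=\widetilde{m}_{j+1}+\tau_{j+1}+1$ for every $j$. As a check, the trivial signature $\tau=0$ of the minimal case $m=n$ gives $\widetilde{m}_j=n-j$, in agreement with Theorem~\ref{thm:Main}, and summing over all gaps returns $\widetilde{m}_0=\sum_r(\tau_r+1)=m$.

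The main obstacle is the rigorous dictionary of the second step: proving that the dynamically defined oscillation differences of Theorem~\ref{thm:WindingNumbers} coincide, with correct orientation and without an off-by-one error, with the $b_j$-period integers of $\rmd\log\Phi$. This requires a full action-angle analysis of the billiard linearization on $\mathrm{Jac}(\Gamma)$ and a careful treatment of the real structure near the branch points and at $\infty_\pm$, only the skeleton of which is available in the literature; the precise boundary contribution in the argument-principle count, namely the extra unit, is the delicate point. A secondary issue is that the count presupposes all roots of $S$ to be real, so the argument would be unconditional for $m\le n+3$ by Proposition~\ref{pro:Polynomial} and conditional on Conjecture~\ref{con:Roots_of_S} in general, whereas Conjecture~\ref{con:WindingNumbers} would emerge as a by-product (each $\tau_{j+1}+1\ge1$) rather than be assumed.
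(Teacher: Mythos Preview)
The statement you are addressing is labelled a \emph{Conjecture} in the paper, and the authors explicitly say they ``have not been able to transform this argument into a rigorous proof''. So there is no proof in the paper to compare against; what there is, is a one-line heuristic: interpret $\mathcal{C}(m,n;\tau)$ as a singular limit of the minimal case $\mathcal{C}(m,m)$ in which $m-n$ pairs of simple roots of $R$ collide into the double roots $\delta_u$ of $S^2$, and read off the winding numbers from Theorem~\ref{thm:Main} before the collision. Your approach is genuinely different: you keep the curve $\mathcal{Y}^2=R(x)$ fixed, build the meromorphic function $\Phi=S\mathcal{Y}-(P-P(0)/2)$ with $\Phi\cdot\sigma\Phi=P(0)^2/4$ and $\mathrm{div}(\Phi)=m(\infty_+-\infty_-)$, and then try to extract the elliptic winding numbers from the $b$-periods of $\rmd\log\Phi$ by an argument-principle count on each gap. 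The algebraic part of your proposal (the constancy of $|\Phi|$ on the gaps, the divisor at infinity, and the count $\tau_{j+1}+1$ of sign changes of $\mathrm{Im}\,\Phi$ along a $b$-cycle) is sound and attractive; it also explains transparently why the answer telescopes and why Conjecture~\ref{con:WindingNumbers} would follow.

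The genuine gap is exactly the one you flag: you have not proved that the dynamically defined oscillation numbers $\widetilde{m}_j$ of Theorem~\ref{thm:WindingNumbers} coincide with the $b_j$-periods of $\rmd\log\Phi$. That identification is not a bookkeeping matter; it requires showing that the Abel--Jacobi linearization of the billiard map sends one bounce to the class $\mathcal{A}(\infty_+-\infty_-)$ \emph{and} that, in the real homology basis you chose, the integer coordinates of $m$ times that class are precisely the oscillation counts of the elliptic coordinates $\mu_j$. The paper's framework does not supply this dictionary, and the references it cites establish only the torsion statement $m\,\mathcal{A}(\infty_+-\infty_-)=0$, not the finer homology decomposition you need. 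Until that step is carried out---with the orientation conventions fixed and the boundary contribution at the branch points justified rather than asserted---your argument remains a (promising) strategy, conditional moreover on Conjecture~\ref{con:Roots_of_S} for $m>n+3$. In short: the paper has no proof, your route differs from the authors' degeneration heuristic, and the obstacle you identify is the real one.
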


This conjecture follows from the interpretation of $\mathcal{C}(m,n;\tau)$
as a singular limit of $\mathcal{C}(m,m)$ when $m-n$ couples of simple roots
collide, so they become double roots.
Unfortunately,
we have not been able to transform this argument into a rigorous proof,
although all our analytical and numerical computations agree with
the conjecture.

To end this section,
we stress that if conjectures~\ref{con:Roots_of_S} and~\ref{con:tau} hold,
then the elliptic winding numbers $\widetilde{m}_0,\ldots,\widetilde{m}_{n-1}$
of any nonsingular periodic billiard trajectory verify the above-mentioned
relations for some signature
$\tau = (\tau_1,\ldots,\tau_n)$ with non-negative entries,
so the sequence $\widetilde{m}_0,\ldots,\widetilde{m}_{n-1}$ strictly decreases,
and Conjecture~\ref{con:WindingNumbers} holds.

\section{The planar case}
\label{sec:Planar}

We adapt the previous setting of billiards inside ellipsoids of $\Rset^n$
to the planar case $n = 2$.
To follow traditional conventions in the literature,
we write the ellipse as
\begin{equation}\label{eq:Ellipse}
Q =
\left\{
(x,y) \in \Rset^2 : \frac{x^2}{a} + \frac{y^2}{b} = 1
\right\},\qquad
a > b > 0.
\end{equation}
Any nonsingular billiard trajectory inside $Q$ is tangent
to one confocal caustic
\[
Q_\lambda =
\left\{
(x,y) \in \Rset^2 :
\frac{x^2}{a - \lambda} + \frac{y^2}{b - \lambda} = 1
\right\},
\]
where $\lambda \in \Lambda = E \cup H$, with
$E = \left( 0, b \right)$ and $H = \left( b, a \right)$.

The names of the connected components of $\Lambda$ come from the fact that
$Q_\lambda$ is a confocal ellipse for $\lambda \in E$ and
a confocal hyperbola for $\lambda \in H$.
The singular cases $\lambda = b$ and $\lambda = a$ correspond to the
$x$-axis and $y$-axis, respectively.
We say that the \textit{caustic type} of a billiard trajectory is E or H,
when its caustic is an ellipse or a hyperbola (compare with
Definition~\ref{defi:CausticType}).
We also distinguish between E-caustics and H-caustics.

We recall some concepts related to periodic trajectories of
billiards inside ellipses.
These results can be found, for instance,
in~\cite{ChangFriedberg1988,CasasRamirez2011}.
To begin with,
we introduce the function $\rho : \Lambda \rightarrow \Rset$ given by
the quotient of elliptic integrals
\begin{equation}\label{eq:RotationNumber}
\rho(\lambda) = \rho(\lambda;b,a) :=
\frac{\int_0^{\min(b,\lambda)}\frac{\rmd t}{\sqrt{(\lambda-t)(b-t)(a-t)}}}
     {2 \int_{\max(b,\lambda)}^a \frac{\rmd t}{\sqrt{(\lambda-t)(b-t)(a-t)}}}.
\end{equation}
It is called the \textit{rotation number} and characterizes
the caustic parameters that give rise to periodic trajectories.
To be precise, the billiard trajectories with caustic
$Q_\lambda$ are periodic if and only if
\[
\rho(\lambda) = m_1 / 2 m_0 \in \Qset
\]
for some integers $2 \leq m_1 < m_0$,
which are the \textit{winding numbers}.
On the one hand, $m_0$ is the period,
On the other hand,
$m_1$ is twice the number of turns around the ellipse $Q_\lambda$ for E-caustics,
and the number of crossings of the $y$-axis for H-caustics.
Thus, $m_1$ is always even.
Besides, all periodic trajectories with H-caustics have even period.
(Compare with Theorem~\ref{thm:WindingNumbers}.)

\begin{pro}
The winding numbers $2 \le m_1 < m_0$,
rotation number $\rho = m_1 / 2 m_0$,
signature $\tau = (\tau_1,\tau_2) \in \mathcal{T}(m,2)$,
caustic type (E or H), and caustic parameter $\lambda$
of all nonsingular periodic billiard trajectories inside the
ellipse~(\ref{eq:Ellipse}) with elliptic period $m \in \{2,3\}$
are listed in Table~\ref{tab:Planar}.
The ellipses where such trajectories take place are also listed.
\end{pro}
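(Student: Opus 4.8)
The plan is to treat this as a complete, finite enumeration driven by the signature decomposition of Section~\ref{sec:General}. Since $n=2$ and $m\in\{2,3\}$ satisfy $m\le n+3$, Corollary~\ref{cor:C_m_n_tau} guarantees that $\mathcal{C}(m,2)$ is \emph{equivalent} to the disjunction of the conditions $\mathcal{C}(m,2;\tau)$ over $\tau\in\mathcal{T}(m,2)$, so no trajectory is missed and completeness is automatic. First I would list the signatures: $\mathcal{T}(2,2)=\{(0,0)\}$ and $\mathcal{T}(3,2)=\{(1,0),(0,1)\}$, consistent with $\#\mathcal{T}(m,2)={m-1\choose 1}$. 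For each signature I would solve the equivalent power-sum system of Proposition~\ref{pro:C_m_n_tau}. For $m=2$ this reproduces the single relation $\gamma_1=\gamma_2+\gamma_3$ found in Section~\ref{sec:Practical}; for $m=3$ the degree-one factor $S(x)=x-\delta_1$ has its root in $(\gamma_2,\gamma_1)$ when $\tau=(1,0)$ and in $(0,\gamma_3)$ when $\tau=(0,1)$. Feeding these into Corollary~\ref{cor:JKVW} gives $P(x)=\prod_i(x-\gamma_i)$ with $P(x)-P(0)=x(x-\delta_1)^2$ in the first case, and $P(x)=(x-\gamma_1)(x-\delta_1)^2$ with $P(x)-P(0)=x(x-\gamma_2)(x-\gamma_3)$ in the second; eliminating $\delta_1$ turns these into the discriminant conditions $q_0=0$ and $q_1=0$, i.e. the two feasible branches
\[
\sqrt{\gamma_1}=\sqrt{\gamma_2}+\sqrt{\gamma_3},\qquad
3\gamma_1=\gamma_2+\gamma_3+2\sqrt{\gamma_2^2+\gamma_3^2-\gamma_2\gamma_3}
\]
of~(\ref{eq:C32}). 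This pins down the signature column of the table.

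Next I would pass from the $\gamma$-relations to caustic parameters. Because $a>b>0$, the quantity $\gamma_3=1/a$ is always the smallest, while $\{\gamma_1,\gamma_2\}=\{1/b,1/\lambda\}$, so each branch splits into exactly two physical cases: $\gamma_1=1/\lambda>\gamma_2=1/b$ (then $\lambda<b$, caustic type E) and $\gamma_1=1/b>\gamma_2=1/\lambda$ (then $\lambda>b$, caustic type H). Solving for $\lambda$ in each case yields the explicit caustic parameter — for instance $\lambda=ab/(a+b)$ (E) and $\lambda=ab/(a-b)$ (H) for $m=2$, and $1/\sqrt{\lambda}=1/\sqrt{b}\pm 1/\sqrt{a}$ for the two solution-A rows of $m=3$ — and imposing $\lambda\in(0,b)$ or $\lambda\in(b,a)$ produces the admissibility constraints on the ellipse: the E-caustics always exist, whereas e.g. the H-caustic of $m=2$ requires $a>2b$ and that of solution A requires $a>4b$. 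This fills the caustic-type, caustic-parameter, and ``ellipses where they occur'' columns, yielding the six rows that correspond to the six trajectories of Figure~\ref{fig:Planar}.

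For the winding and rotation numbers I would use $\rho=m_1/2m_0=\widetilde{m}_1/2m$, together with the facts available in the planar case: Conjecture~\ref{con:WindingNumbers} is \emph{known} here, so $1\le\widetilde{m}_1<\widetilde{m}_0=m$; moreover $m_1$ is always even and H-caustic trajectories have even period. For $m=2$ this already forces $\widetilde{m}_1=1$, hence $\rho=1/4$, and since $m_1=d$ must be even we get $d=2$ and $(m_0,m_1)=(4,2)$ for both E and H. For $m=3$ the bound leaves $\widetilde{m}_1\in\{1,2\}$, one value per signature; since $\widetilde{m}_1$ is a discrete invariant that is constant along each branch, I would fix the assignment by evaluating the rotation-number integral~(\ref{eq:RotationNumber}) at a single explicit representative (Example~\ref{ex:Planar2}), obtaining $\widetilde{m}_1=1$ for $\tau=(1,0)$ and hence $\widetilde{m}_1=2$ for $\tau=(0,1)$ by elimination, so that $\rho=1/6$ and $\rho=1/3$. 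The factor $d$ then follows from parity: $d=2$, giving $(m_0,m_1)=(6,2)$ and $(6,4)$, except for the solution-B E-caustic, which is the unique orbit with $m=m_0$ (Figure~\ref{fig:Planar:c}) and therefore has $d=1$ and $(m_0,m_1)=(3,2)$.

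The hard part is this last step. The algebra of the first two paragraphs is essentially forced, but the winding numbers cannot be read off from Conjecture~\ref{con:tau}, which is only conjectural, and must be extracted rigorously instead. The two delicate points are justifying that $\widetilde{m}_1$ is constant along each solution branch — so that a single evaluation determines it — and carrying out the one explicit elliptic-integral computation that breaks the symmetry between the two $m=3$ signatures; once these are in place, the parity bookkeeping that converts $(\widetilde{m}_0,\widetilde{m}_1)$ into $(m_0,m_1)$ through $d\in\{1,2\}$ is routine.
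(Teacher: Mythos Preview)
Your Steps 1--3 coincide with the paper's proof: the same signature enumeration via Corollary~\ref{cor:C_m_n_tau}, the same power-sum reductions producing $\gamma_1=\gamma_2+\gamma_3$ and the two branches~(\ref{eq:C32}), the same translation to $(a,b,\lambda)$ for each caustic type, and the same interval checks yielding the ellipse constraints.

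The divergence is entirely in Step~4. The paper makes no analytical argument there: it simply draws one trajectory per row (Figure~\ref{fig:Planar}) and reads off $m_0$ as the number of impacts and $m_1$ as twice the turns (E) or the number of $y$-axis crossings (H). Your route is more rigorous in spirit, but two of your shortcuts need tightening. First, ``by elimination'' presupposes that the two $m=3$ signatures give distinct $\widetilde m_1$; this is true because $\rho$ is strictly increasing on $(0,b)$ (recalled just after the proposition) and the two E-caustic $\lambda$'s differ --- and in fact the elementary inequality $\frac{ab}{a+b+2\sqrt{ab}}<\frac{3ab}{a+b+2\sqrt{a^2-ab+b^2}}$ together with monotonicity already assigns $\rho=1/6$ to $\tau=(1,0)$ and $\rho=1/3$ to $\tau=(0,1)$, so the explicit elliptic-integral evaluation you flag as the hard step is avoidable. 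Second, invoking Figure~\ref{fig:Planar:c} to get $d=1$ for the $(0,1)$-E row undercuts your own analytic stance; the parity clause of Theorem~\ref{thm:WindingNumbers} does this directly, since for E-caustics only $m_1$ is forced even (so $\rho=1/3$ gives minimal $(m_0,m_1)=(3,2)$ and $d=1$), whereas for H-caustics both $m_0$ and $m_1$ must be even, forcing $d=2$ throughout.
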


\begin{table}
\centering
\begin{tabular}{cccccccc}
\hline
$m$ & $m_0$ & $m_1$ & $\rho$ & $\tau$  & Type & Ellipses  & Caustic parameter \\
\hline
$2$ & $4$  & $2$   & $1/4$  & $(0,0)$ & E & any      & $\frac{ab}{a+b}$ \\
$2$ & $4$  & $2$   & $1/4$  & $(0,0)$ & H & $2b<a$   & $\frac{ab}{a-b}$ \\
$3$ & $6$  & $2$   & $1/6$  & $(1,0)$ & E & any      & $\frac{ab}{a+b+2\sqrt{ab}}$ \\
$3$ & $6$  & $2$   & $1/6$  & $(1,0)$ & H & $4b<a$   & $\frac{ab}{a+b-2\sqrt{ab}}$ \\
$3$ & $3$  & $2$   & $1/3$  & $(0,1)$ & E & any      & $\frac{3ab}{a+b+2\sqrt{a^2-ab+b^2}}$ \\
$3$ & $6$  & $4$   & $1/3$  & $(0,1)$ & H & $4b<3a$  & $\frac{ab}{2\sqrt{a^2-ab}+b-a}$ \\
\hline
\end{tabular}
\caption{\label{tab:Planar}Algebraic formulas for the caustic parameter
corresponding to nonsingular periodic billiard trajectories with
elliptic period $m \in \{2,3\}$ in the planar case.}
\end{table}

\begin{proof}
We split the proof in four steps.

\textit{Step 1: To find the solutions of $\mathcal{C}(m,2)$
in terms of the inverse quantities $\gamma_i$.}
First, we saw in Proposition~\ref{pro:C_n_n} that $\mathcal{C}(2,2)$ holds
if and only if $\gamma_1 = \gamma_2 + \gamma_3$.

Next, we focus on the case $m=3$.
We note that $\mathcal{C}(3,2)$ holds if and only if
$\mathcal{C}(3,2;\tau)$ holds for some $\tau = (\tau_1,\tau_2) \in \Zset^2$
such that $\tau_1 + \tau_2 = 1$ and $\tau_1,\tau_2 \ge 0$;
see Corollary~\ref{cor:C_m_n_tau}.

Let us begin with the signature $\tau = (1,0)$.
After a straightforward check,
we get that the decompositions presented in Definition~\ref{defi:Decompositions}
are $J_\tau=\{1,2,3\}$, $K_\tau = V_\tau = \emptyset$,
and $W_\tau = \{ 1 \}$.
Thus, $\mathcal{C}(3,2;\tau)$ holds if and only if there exists some
$\delta_1 \in (\gamma_2,\gamma_1)$ such that
\[
P(x) = (x-\gamma_1)(x-\gamma_2)(x-\gamma_3) = P(0) + x(x-\delta_1)^2,
\]
or, equivalently, if and only if the discriminant of the polynomial
\[
Q(x) = \frac{P(x) - P(0)}{x} =
x^2 - \rme_1(\gamma_1,\gamma_2,\gamma_3) x + \rme_2(\gamma_1,\gamma_2,\gamma_3)
\]
is equal to zero.
The discriminant of $Q(x)$ is
\[
\Delta =
\gamma_1^2 + \gamma_2^2 + \gamma_3^2 -
2 \gamma_1 \gamma_2 - 2 \gamma_1 \gamma_3 - 2\gamma_2 \gamma_3.
\]
We already saw in Section~\ref{sec:Practical} that
the only feasible solution of $\Delta = 0$ is
$\sqrt{\gamma_1} = \sqrt{\gamma_2} + \sqrt{\gamma_3}$.

When $\tau = (0,1)$, the decompositions are
$J_\tau=\{ 1 \}$, $K_\tau = \{2,3\}, V_\tau = \{1\}$,
and $W_\tau = \emptyset$.
Thus, $\mathcal{C}(3,2;\tau)$ holds if and only if there exists some
$\delta_1 \in (0,\gamma_3)$ such that
\[
\gamma_1 + 2\delta_1 = \gamma_2 + \gamma_3,\qquad
\gamma^2_1 + 2\delta_1^2 = \gamma^2_2 + \gamma^2_3,
\]
or, equivalently, if and only if
\begin{eqnarray*}
3 \gamma^2_1 - 2(\gamma_2 + \gamma_3) \gamma_1 - (\gamma_2  - \gamma_3)^2
& = &
(\gamma_2 + \gamma_3 -\gamma_1)^2 - 2(\gamma^2_2 + \gamma^2_3 - \gamma^2_1) \\
& = &
(2 \delta_1)^2 - 4\delta_1^2 = 0.
\end{eqnarray*}
And we already saw in Section~\ref{sec:Practical} that
the only feasible solution of the above equation is
\[
3\gamma_1 =
\gamma_2 + \gamma_3 + 2\sqrt{\gamma^2_2 + \gamma^2_3 - \gamma_2 \gamma_3}.
\]

\textit{Step 2:
        To express the above solutions in terms of $a$, $b$, and $\lambda$}.
If the caustic type is E,
then $\lambda \in (0,b)$, $\gamma_1 = 1/\lambda$, $\gamma_2 = 1/b$,
and $\gamma_3 = 1/a$.
Thus,
\begin{eqnarray*}
\gamma_1 = \gamma_2 + \gamma_3
& \Leftrightarrow &
\lambda = \textstyle \frac{ab}{a+b}, \\
\sqrt{\gamma_1} = \sqrt{\gamma_2} + \sqrt{\gamma_3}
& \Leftrightarrow &
\lambda = \textstyle \frac{ab}{a+b+2\sqrt{ab}} \\
3\gamma_1 = \gamma_2 + \gamma_3 +
            2\sqrt{\gamma^2_2 + \gamma^2_3 - \gamma_2 \gamma_3}
& \Leftrightarrow &
\textstyle \lambda = \frac{3ab}{a+b+2\sqrt{a^2-ab+b^2}}.
\end{eqnarray*}

If the caustic type is H,
then $\lambda \in (b,a)$, $\gamma_1 = 1/b$, $\gamma_2 = 1/\lambda$,
and $\gamma_3 = 1/a$.
Thus,
\begin{eqnarray*}
\gamma_1 = \gamma_2 + \gamma_3
& \Leftrightarrow &
\textstyle \lambda = \frac{ab}{a-b}, \\
\sqrt{\gamma_1} = \sqrt{\gamma_2} + \sqrt{\gamma_3}
& \Leftrightarrow &
\lambda = \textstyle \frac{ab}{a+b - 2\sqrt{ab}} \\
3\gamma_1 = \gamma_2 + \gamma_3 +
            2\sqrt{\gamma^2_2 + \gamma^2_3 - \gamma_2 \gamma_3}
& \Leftrightarrow &
\textstyle \lambda = \frac{ab}{2\sqrt{a^2-ab} + b -a}.
\end{eqnarray*}

\textit{Step 3: To determine the ellipses where such
periodic billiard trajectories take place.}
We ask whether the caustic parameters found above
belong to the interval $(0,b)$ for E-caustics,
and to the interval $(b,a)$ for H-caustics.
The caustic type E does not give any restriction, because
\[
\textstyle
0 < b < a \mbox{ and }
\lambda \in
\left\{
\frac{ab}{a+b},
\frac{ab}{a+b+2\sqrt{ab}},
\frac{3ab}{a+b+2\sqrt{a^2-ab+b^2}}
\right\} \Rightarrow \lambda \in (0,b).
\]
On the contrary, the caustic type H gives rise to some restrictions.
Namely,
\begin{eqnarray*}
\textstyle b < \frac{ab}{a-b} < a & \Leftrightarrow & 2b < a,\\
\textstyle b < \frac{ab}{a+b - 2\sqrt{ab}} < a & \Leftrightarrow & 4b < a, \\
\textstyle b < \frac{ab}{2\sqrt{a^2-ab} + b -a} < a & \Leftrightarrow & 4b < 3a.
\end{eqnarray*}

\begin{figure}
\centering
\subfigure[$\lambda= \frac{ab}{a+b}$]{
  \label{fig:Planar:a}
  \includegraphics[width=1.68in]{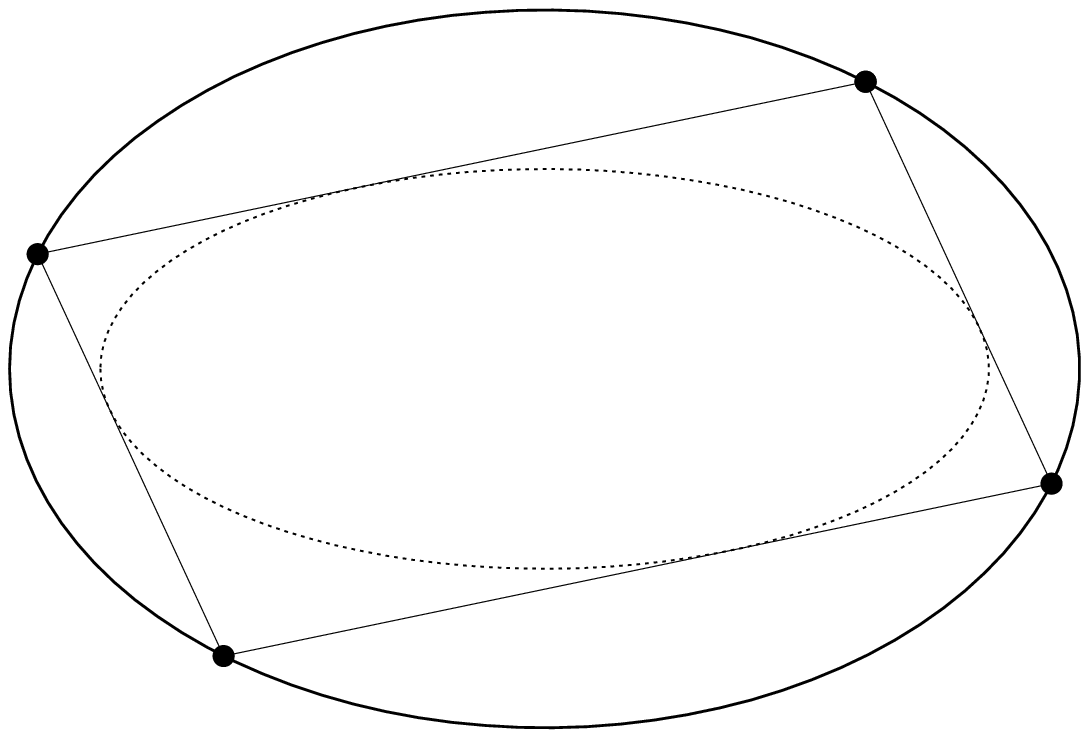}
}
\hfill
\subfigure[$\lambda = \frac{ab}{a+b+2\sqrt{ab}}$]{
  \label{fig:Planar:b}
  \includegraphics[width=1.68in]{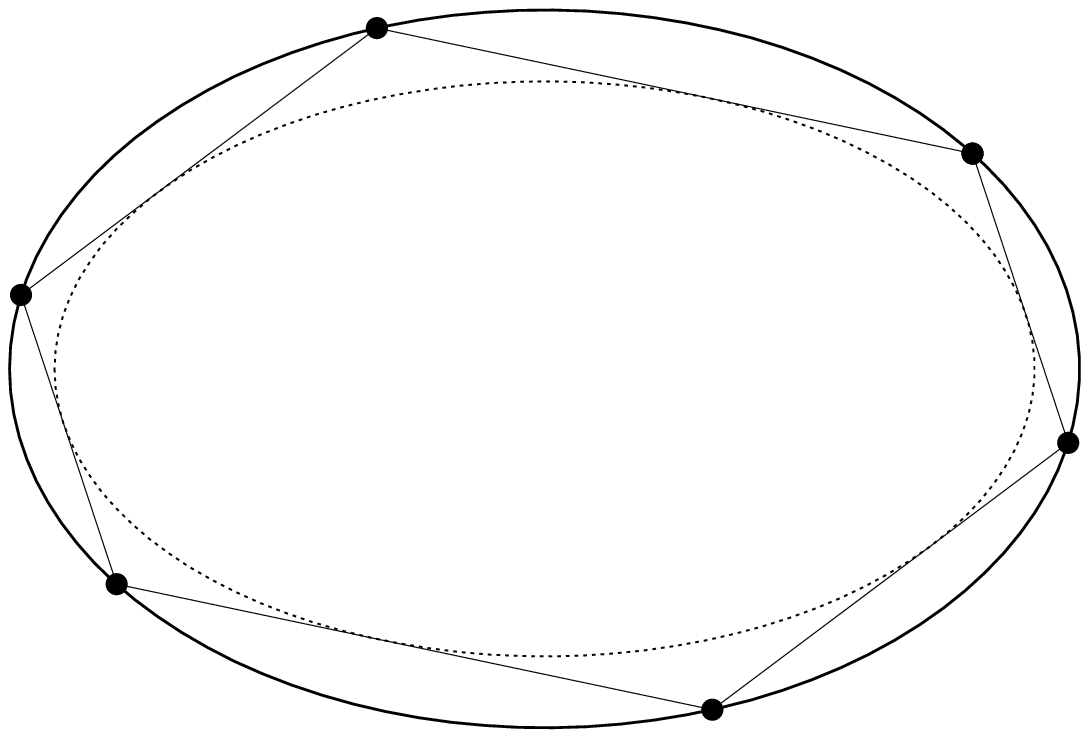}
}
\hfill
\subfigure[$\lambda = \frac{3ab}{a+b+2\sqrt{a^2-ab+b^2}}$]{
  \label{fig:Planar:c}
  \includegraphics[width=1.68in]{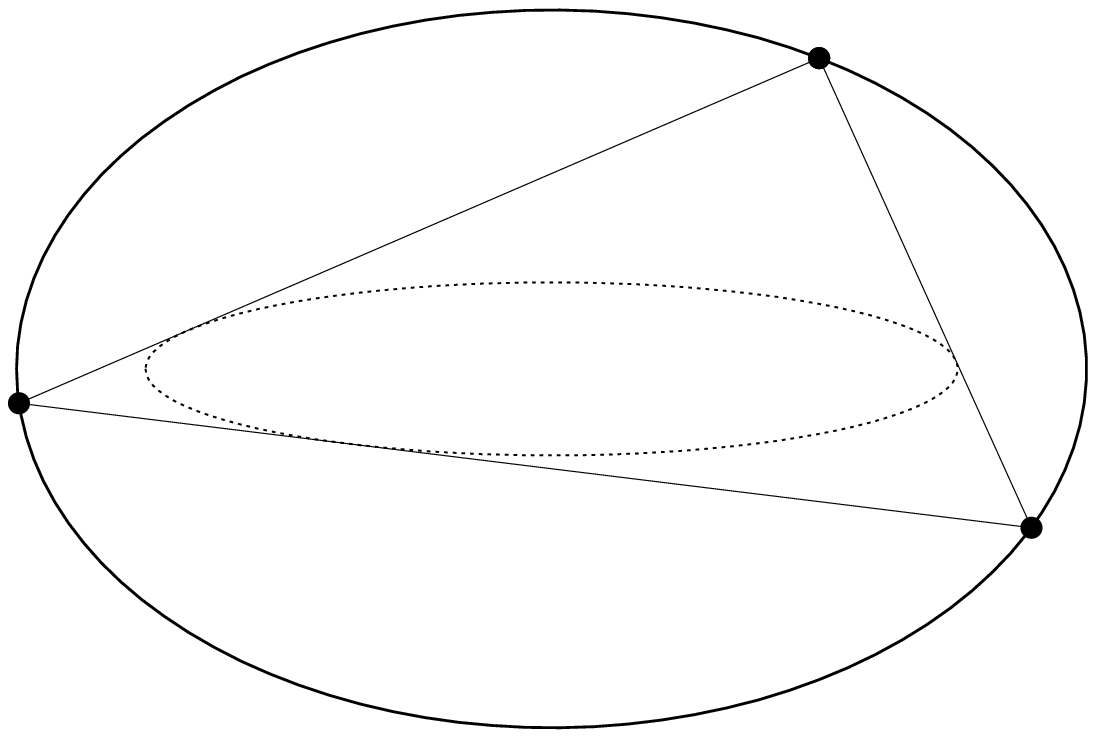}
}

\subfigure[$\lambda = \frac{ab}{a-b}$]{
  \label{fig:Planar:d}
  \includegraphics[width=1.68in]{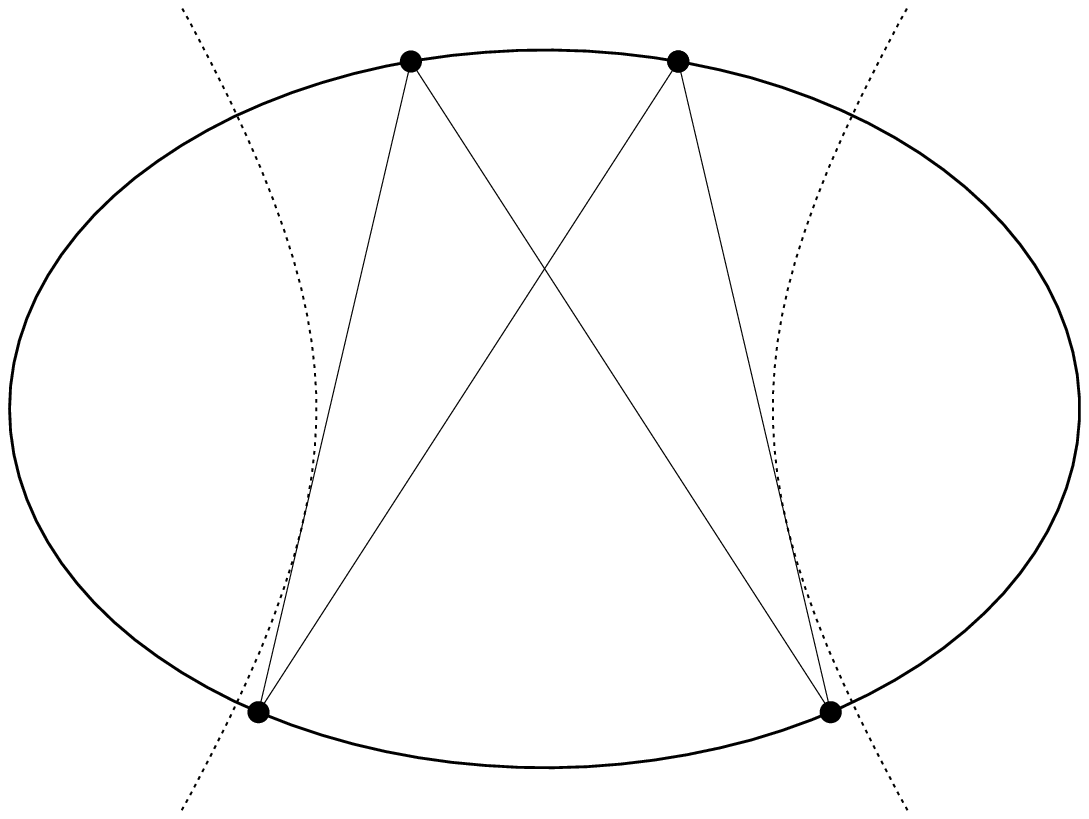}
}
\hfill
\subfigure[$\lambda = \frac{ab}{a+b-2\sqrt{ab}}$]{
  \label{fig:Planar:e}
  \includegraphics[width=1.68in]{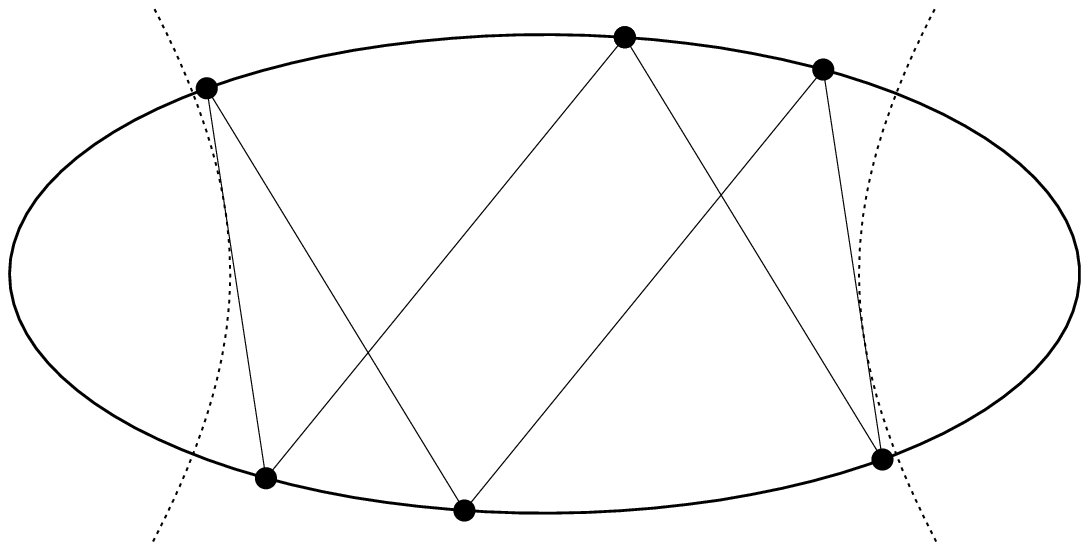}
}
\hfill
\subfigure[$\lambda = \frac{ab}{2\sqrt{a^2-ab}+b-a}$]{
  \label{fig:Planar:f}
  \includegraphics[width=1.68in]{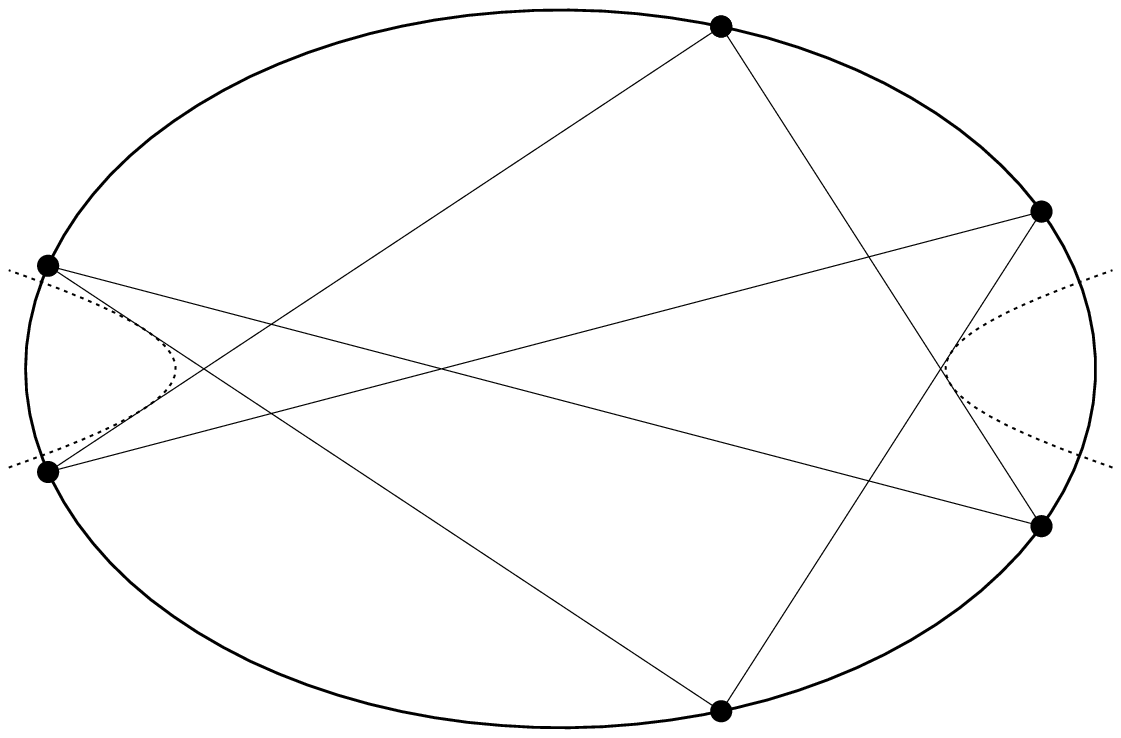}
}
\caption{Periodic trajectories corresponding to the caustic
         parameters listed in Table~\ref{tab:Planar}.
         The ellipse for $\lambda = \frac{ab}{a+b-2\sqrt{ab}}$ is flatter,
         because it must satisfy condition $4b < a$.}
\label{fig:Planar}
\end{figure}

\textit{Step 4: To find the winding numbers and the rotation number.}
The winding numbers $2 \le m_1 < m_0$ and the rotation number
$\rho(\lambda) = m_1/2m_0$ are obtained from geometric arguments.
To be precise,
we draw in Figure~\ref{fig:Planar} a billiard trajectory tangent
to $Q_{\lambda}$ for each of the caustic parameters listed in
Table~\ref{tab:Planar}.
Then we recall that $m_0$ is the period and $m_1$ is twice the number of
turns around the ellipse $Q_\lambda$ for E-caustics,
and the number of crossings of the $y$-axis for H-caustics.
\end{proof}

Let $\rho_\ast \in \{1/3,1/4,1/6\}$.
We have seen above that inside any ellipse~(\ref{eq:Ellipse}) there exists
a unique E-caustic whose tangent billiard trajectories have
rotation number $\rho_\ast$.
Besides,
if $\lambda_\rmE(a,b;\rho_\ast)$ and $\lambda_\rmH(a,b;\rho_\ast)$
denote the caustic parameters associated to the E-caustic
and H-caustic with rotation number $\rho_\ast$, we see that
\begin{equation}\label{eq:Symmetries}
\lambda_\rmE(b,a;\rho_\ast) = \lambda_\rmE(a,b;\rho_\ast),\qquad
b=\lambda_\rmE(a,\lambda_\rmH(a,b;\rho_\ast);\rho_\ast).
\end{equation}

These properties can be generalized.
On the one hand,
the rotation number~(\ref{eq:RotationNumber}) is an increasing function
in the interval $(0,b)$ such that $\rho(0) = 0$ and $\rho(b) = 1/2$;
see~\cite{CasasRamirez2011}.
This means that given any $\rho_\ast \in(0,1/2)$,
there exists a unique $\lambda_\ast \in (0,b)$ such that
$\rho(\lambda_\ast) = \rho_\ast$, and so,
there exists a unique E-caustic whose tangent billiard trajectories
have rotation number $\rho_\ast$.
On the other hand,
relations~(\ref{eq:Symmetries}) can be obtained by using that the rotation
number~(\ref{eq:RotationNumber}) is symmetric in the three parameters
$a$, $b$, and $\lambda$.
Consequently,
one can find the formula for $\lambda_\rmH$ (respectively, $\lambda_\rmE$)
from the formula for $\lambda_\rmE$ (respectively, $\lambda_\rmH$)
by using the second relation.

It is interesting to realize that the results in Table~\ref{tab:Planar}
agree with Conjecture~\ref{con:tau}.

In the planar case $n=2$,
the caustic type~(\ref{eq:CausticType}) is $\varsigma = 0$ or,
equivalently, E.
Hence, the planar version of Theorem~\ref{thm:Main} is shown in
the first row of Table~\ref{tab:Planar},
because $\lambda = \lambda_\rmE(a,b;1/4) = ab/(a+b)$
is the root of $t^2 - (t-a)(t-b)$.
This naive observation was the germ of this paper.

\section{The spatial case}
\label{sec:Spatial}

In order to study the spatial case $n = 3$,
we consider the triaxial ellipsoid
\begin{equation}\label{eq:Ellipsoid_3D}
Q =
\left\{
(x,y,z) \in \Rset^3 :
\frac{x^2}{a} + \frac{y^2}{b} + \frac{z^2}{c} = 1
\right\},
\qquad a > b > c > 0.
\end{equation}

Any nonsingular billiard trajectory inside $Q$ is
tangent to two distinct nonsingular caustics
$Q_{\lambda_1}$ and $Q_{\lambda_2}$, with $\lambda_1 < \lambda_2$,
of the confocal family
\begin{equation}\label{eq:ConfocalFamily}
Q_\lambda =
\left\{
(x,y,z) \in \Rset^3 :
\frac{x^2}{a-\lambda} + \frac{y^2}{b-\lambda} + \frac{z^2}{c-\lambda}
= 1
\right\}.
\end{equation}
The caustic $Q_\lambda$ is an ellipsoid for $\lambda \in (0,c)$,
a hyperboloid of one sheet when $\lambda \in (c,b)$,
and a hyperboloid of two sheets if $\lambda \in (b,a)$.
Not all combinations of nonsingular caustics can take place,
but only the four caustic types EH1, H1H1, EH2, and H1H2.

\begin{pro}
The caustic type and caustic parameters of all nonsingular periodic
billiard trajectories inside the triaxial ellipsoid~(\ref{eq:Ellipsoid_3D})
with elliptic period $m = 3$ are listed in Table~\ref{tab:Spatial}.
The ellipsoids where such trajectories take place are also listed.
\end{pro}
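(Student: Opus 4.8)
The plan is to follow, in the minimal case $m=n=3$, the same four-step strategy used in the planar proposition (Section~\ref{sec:Planar}). Because $m=n$, the polynomial $S(x)$ of Proposition~\ref{pro:Polynomial} has degree $m-n=0$, so there are no auxiliary roots and Proposition~\ref{pro:C_n_n} applies unchanged: the Cayley condition $\mathcal{C}(3,3)$ is equivalent to the two power-sum identities $\sum_{j\in J}\gamma_j^l=\sum_{k\in K}\gamma_k^l$ for $l=1,2$, where $J=\{1,4,5\}$ and $K=\{2,3\}$ is the fixed decomposition of Corollary~\ref{cor:JK}. The only datum that depends on the caustic type is the dictionary between the ordered inverse quantities $\gamma_1>\cdots>\gamma_5$ and the inverse parameters $1/a,1/b,1/c,1/\lambda_1,1/\lambda_2$.

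First I would fix a caustic type and read off this dictionary from the interval in which each caustic parameter lives. For EH1 one has $0<\lambda_1<c<\lambda_2<b<a$, hence $(\gamma_1,\ldots,\gamma_5)=(1/\lambda_1,1/c,1/\lambda_2,1/b,1/a)$, and the other three types are treated identically. Substituting into the two power-sum identities yields, for each type, a $2\times 2$ system in the unknowns $1/\lambda_1,1/\lambda_2$ with coefficients built from $1/a,1/b,1/c$. For H1H1 and EH2 the two caustic inverses lie on the same side of both identities, so the system returns their sum and their sum of squares, hence directly their product; in the H1H1 case this recovers that $\lambda_1,\lambda_2$ are the roots of $t^3-(t-a)(t-b)(t-c)$, as in Theorem~\ref{thm:Main}. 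For the interleaved types EH1 and H1H2 one caustic inverse sits in $J$ and the other in $K$, so instead I would obtain $1/\lambda_1-1/\lambda_2$ from the linear identity and $(1/\lambda_1)^2-(1/\lambda_2)^2$ from the quadratic one, and divide to recover $1/\lambda_1+1/\lambda_2$; solving the resulting linear system in the two inverses gives each root explicitly.

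Next I would invert $\lambda_i=1/\gamma$ and clear denominators, expressing each caustic parameter as an explicit algebraic function of $a,b,c$ and so filling the caustic-parameter column of Table~\ref{tab:Spatial}. The remaining, and hardest, step is the ``ellipsoids'' column: I would impose that each computed solution lies strictly inside the interval demanded by its caustic type and that the two caustics are distinct and nonsingular, which by Proposition~\ref{pro:Existenceconditions} is precisely the condition for a nonsingular trajectory with those caustics to exist. The interval memberships translate into inequalities among $a,b,c$, and this is where the nontrivial thresholds surface: for EH1 the binding condition $1/\lambda_2<1/c$ reduces to $(1/c-1/a-1/b)^2>1/(ab)$, that is $c<ab/(a+b+\sqrt{ab})$, in agreement with the introduction, while the parallel analyses for H1H1, EH2, and H1H2 produce the other entries (e.g.\ for H1H1 the real-root requirement is $(ab+ac+bc)^2>4abc(a+b+c)$).

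Finally, completeness is automatic: since the four listed types are the only feasible caustic combinations and, for each of them, $\mathcal{C}(3,3)$ forces a unique admissible choice of $\lambda_1,\lambda_2$, every nonsingular period-$3$ trajectory appears exactly once in the table. If Table~\ref{tab:Spatial} also records periods and winding numbers, these follow from Theorem~\ref{thm:Main} in the H1H1 case, and otherwise from the hyperelliptic-integral criterion of Theorem~\ref{thm:WindingNumbers} together with the parity of the coordinate-hyperplane crossings.
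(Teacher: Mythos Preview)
Your plan is correct and follows essentially the same route as the paper: invoke Proposition~\ref{pro:C_n_n} with the fixed decomposition $J=\{1,4,5\}$, $K=\{2,3\}$, case-split by caustic type to identify which of $1/a,1/b,1/c,1/\lambda_1,1/\lambda_2$ occupy which $\gamma_i$, solve the resulting $2\times2$ system, and then check the interval constraints. The one methodological difference is that for the interleaved types EH1 and H1H2 the paper uses formulation~(ii) of Proposition~\ref{pro:C_n_n} (the product relation $\prod_{j\in J}(\gamma_j-\gamma_k)=\prod_{j\in J}\gamma_j$, or equivalently $c_k^3=(c_k-c_1)(c_4-c_k)(c_5-c_k)$) rather than the power-sum form~(iv) you propose; this yields the cubic relation $c^3=(c-\lambda_1)(b-c)(a-c)$ directly in the shape recorded in Table~\ref{tab:Spatial}, whereas your difference-and-divide manoeuvre would produce an equivalent but less transparent expression that then needs to be massaged into that form. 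For H1H1 and EH2, where both caustic inverses lie on the same side, your approach and the paper's coincide exactly. Your interval analysis is also sound; note only that the paper checks all four endpoint conditions in each case (not just the binding one), and for H1H1 relies on Lemma~\ref{lem:Ordering} via Theorem~\ref{thm:Main} to guarantee that real simple roots automatically fall in $(c,b)$, so the discriminant condition alone suffices. Table~\ref{tab:Spatial} does not list winding numbers, so your final remark is unnecessary.
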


\begin{table}
\centering
\begin{tabular}{lll}
\hline
Type  & Ellipsoids  & Caustic parameters \\
\hline
EH1 &
$c < \displaystyle{\frac{ab}{a+b+\sqrt{ab}}}$ &
$\left\{\begin{array}{c}
c^3 = (c-\lambda_1)(b-c)(a-c) \\
1/\lambda_2 + 1/c = 1/a + 1/b + 1/\lambda_1
\end{array}\right.$ \\
H1H1 &
$c < \displaystyle{\frac{ab}{a+b+2\sqrt{ab}}}$ &
Roots of $t^3-(t-a)(t-b)(t-c)$ \\
EH2 &
$\left\{ \begin{array}{l}
 c < \displaystyle{\frac{a-2b}{2a-3b}}a \\
 2b < a
 \end{array}\right.$ &
Roots of $(a-b)(a-c)t^2 + ( bc - a(b+c) ) a t + a^2 b c$ \\
H1H2 &
$\left\{ \begin{array}{l}
c < \displaystyle{\frac{a-2b}{(a-b)^2}}ab \\
b > \displaystyle{\frac{ac}{a+c-\sqrt{ac}}}
  \end{array} \right.$ &
$\left\{\begin{array}{c}
b^3 = (b-c)(\lambda_2-b)(a-b) \\
1/\lambda_1 + 1/b = 1/a + 1/c + 1/\lambda_2
\end{array}\right.$ \\
\hline
\end{tabular}
\caption{\label{tab:Spatial}Algebraic formulas for the caustic parameters
corresponding to the nonsingular periodic trajectories with
elliptic period $m = 3$ in the spatial case.}
\end{table}

\begin{proof}
If $a$, $b$, and $c$ are the ellipsoidal parameters,
and $\lambda_1$ and $\lambda_2$ are the caustic parameters, we set
$\{ c_1,c_2,c_3,c_4,c_5 \} = \{ a,b,c,\lambda_1,\lambda_2\}$,
where $0 < c_1 < c_2 < c_3 < c_4 < c_5$.
We also set $\gamma_i = 1/c_i$.
Let $\{1,2,3,4,5\} = J \cup K$, with $J = \{1,4,5\}$ and $K = \{2,3\}$,
be the decomposition defined in Corollary~\ref{cor:JK} when $n=3$.
From Proposition~\ref{pro:C_n_n} we know that
\begin{eqnarray*}
\mathcal{C}(3,3) & \Leftrightarrow &
\gamma_2 + \gamma_3 = \gamma_1 + \gamma_4 + \gamma_5 \mbox{ and }
\gamma^2_2 + \gamma^2_3 = \gamma^2_1 + \gamma^2_4 + \gamma^2_5 \\
& \Leftrightarrow &
(\gamma_1 - \gamma_k)(\gamma_4 - \gamma_k)(\gamma_5 - \gamma_k) =
\gamma_1 \gamma_4 \gamma_5, \mbox{ for $k=2,3$} \\
& \Leftrightarrow &
c^3_k = (c_k - c_1)(c_4 - c_k)(c_5 - c_k), \mbox{ for $k=2,3$.}
\end{eqnarray*}

In the rest of the proof, we study each caustic type separately.

\textit{Caustic type EH1.}
In this case $0 < \lambda_1 < c < \lambda_2 < b < a$, so
\[
c_1 = \lambda_1,\qquad
c_2 = c,\qquad
c_3 = \lambda_2,\qquad
c_4 = b,\qquad
c_5 = a.
\]
Thus the formula for $\lambda_1$ follows from relation
$c^3_2 = (c_2 - c_1)(c_4 - c_2)(c_5 - c_2)$,
whereas the formula for $\lambda_2$  follows from relation
$\gamma_2 + \gamma_3 = \gamma_1 + \gamma_4 + \gamma_5$.
Next, we look for ellipsoidal parameters such that
the caustic parameters computed using these two formulas are
placed in the right intervals:
$\lambda_1 \in (0,c)$ and $\lambda_2 \in (c,b)$.

To begin with, we note that $\lambda_1 < c$,
since $(c-\lambda_1)(b-c)(a-c) = c^3 > 0$.
Besides,
\[
\lambda_1 = \frac{ab-(a+b)c}{(b-c)(a-c)}c > 0 \Leftrightarrow
c < \frac{ab}{a+b}.
\]
On the other hand, if $\lambda_1 \in (0,c)$, then
\[
1/\lambda_2 = 1/a + 1/b + (1/\lambda_1 - 1/c) > 1/a + 1/b > 1/b,
\]
so $\lambda_2 < b$. Finally,
\begin{eqnarray*}
\lambda_2 > c & \Leftrightarrow &
\frac{1}{c} + \frac{c}{ab-(a+b)c} =
\frac{1}{\lambda_1} =
\frac{1}{\lambda_2} + \frac{1}{c} - \frac{1}{a} - \frac{1}{b}
< \frac{2}{c} - \frac{1}{a} - \frac{1}{b} \\
& \Leftrightarrow &
c < \frac{ab}{a+b+\sqrt{ab}}.
\end{eqnarray*}
Therefore, $\lambda_1 \in (0,c)$ and $\lambda_2 \in (c,b)$
if and only if $c < ab/(a+b+\sqrt{ab})$.

\textit{Caustic type H1H1.}
If $n=3$,
then the caustic type~(\ref{eq:CausticType}) is $\varsigma = (1,1)$ or,
equivalently, H1H1.
Hence, the study for the caustic type H1H1 was already carried out
in Theorem~\ref{thm:Main}.
Suffice it to note that the polynomial
\[
t^3-(t-a)(t-b)(t-c) = (a+b+c)t^2 - (ab+ac+bc) t + abc
\]
has two real simple roots if and only if its discriminant
\[
\Delta = (ab+ac+bc)^2 - 4abc(a+b+c) =
(a-b)^2 c^2 - 2ab(a + b)c + a^2b^2
\]
is positive.
This discriminant is a second-degree polynomial in $c$
whose roots are
\[
c_\pm =
\frac{ab(a+b) \pm 2ab \sqrt{ab}}{(a-b)^2} =
\frac{ab}{a+b \mp 2 \sqrt{ab}}.
\]
We note that $0 < c_- < b < c_+$.
Thus, using that $0 < c < b < a$,
we get $\Delta > 0 \Leftrightarrow c < c_-$.

\textit{Caustic type EH2.}
In this case $0 < \lambda_1 < c < b < \lambda_2 < a$, so
\[
c_1 = \lambda_1,\qquad
c_2 = c,\qquad
c_3 = b,\qquad
c_4 = \lambda_2,\qquad
c_5 = a.
\]
Using relations
$\gamma^l_2 + \gamma^l_3 = \gamma^l_1 + \gamma^l_4 + \gamma^l_5$,
with $l=1,2$, we know that
\[
s_l := \frac{1}{\lambda^l_1} + \frac{1}{\lambda^l_2} =
\frac{1}{c^l} + \frac{1}{b^l} - \frac{1}{a^l},\qquad l =1,2.
\]
Hence, $1/\lambda_1$ and $1/\lambda_2$ are the roots of the polynomial
\[
(x-1/\lambda_1)(x-1/\lambda_2) =
x^2 - s_1 x + \frac{s_1^2-s_2}{2} =
x^2 + \frac{bc-a(b+c)}{abc}x + \frac{(a-b)(a-c)}{a^2 b c}.
\]
Thus, using the change of variables $t=1/x$,
we get that $\lambda_1$ and $\lambda_2$ are the roots of
\[
Q(t) = (a-b)(a-c)t^2 + ( bc - a(b+c) ) a t + a^2 b c.
\]
We look for ellipsoidal parameters such that $Q(t)$ has a root in $(0,c)$
and a root in $(b,a)$.
The root in $(0,c)$ always exists,
since $Q(0) = a^2 b c > 0$ and $Q(c) = - c^3(a-b) < 0$.
Besides, $Q(b) = -b^3(a-c) < 0$
and $\lim_{t \to + \infty} Q(t) = +\infty$,
so $Q(t)$ has a root in $(b,a)$ if and only if
\[
Q(a) = a^2 \left(a^2 - 2a(b+c) + 3bc \right) > 0,
\]
or, equivalently, if and only if $c < (a-2b)a/(2a-3b)$ and $2b < a$.
We have used that $0 < c < b < a$ in the last equivalence.

\textit{Caustic type H1H2.}
In this case $0 < c < \lambda_1 < b < \lambda_2 < a$, so
\[
c_1 = c,\qquad
c_2 = \lambda_1,\qquad
c_3 = b,\qquad
c_4 = \lambda_2,\qquad
c_5 = a.
\]
Thus the formula for $\lambda_2$ follows from relation
$c^3_2 = (c_2 - c_1)(c_4 - c_2)(c_5 - c_2)$,
whereas the formula for $\lambda_1$  follows from relation
$\gamma_2 + \gamma_3 = \gamma_1 + \gamma_4 + \gamma_5$.
Next, we look for conditions on the ellipsoidal parameters such that
the caustic parameters computed from the previous formulas are
placed in the right intervals:
$\lambda_1 \in (c,b)$ and $\lambda_2 \in (b,a)$.

To begin with, we note that $\lambda_2 > b$,
because $(b-c)(\lambda_2-b)(a-b) = b^3 > 0$.
Besides,
\[
\frac{(a+c)b-ac}{(a-b)(b-c)}b = \lambda_2 < a \Leftrightarrow
c < \frac{a-2b}{(a-b)^2}ab.
\]
On the other hand, using that $0 < c < b < a$, we get that
\begin{eqnarray*}
c < \lambda_1 < b & \Leftrightarrow &
\frac{2}{b} - \frac{1}{a} - \frac{1}{c} <
\frac{1}{\lambda_2} = \frac{1}{\lambda_1} + \frac{1}{b} - \frac{1}{a} - \frac{1}{c} <
\frac{1}{b} - \frac{1}{a} \\
& \Leftrightarrow &
\frac{2}{b} - \frac{1}{a} - \frac{1}{c} <
\frac{1}{b} - \frac{b}{(a+c)b - ac} < \frac{1}{b} - \frac{1}{a} \\
& \Leftrightarrow &
b > \frac{ac}{a+c-\sqrt{ac}}.
\end{eqnarray*}
Thus, $\lambda_1 \in (0,c)$ and $\lambda_2 \in (b,a)$
if and only if $c < (a-2b)ab/(a-b)^2$ and $b > ac/(a+c-\sqrt{ac})$.
\end{proof}

Let us look for the winding numbers of the trajectories described in
the previous proposition.
The winding numbers $m_0$, $m_1$, and $m_2$ describe how the
periodic billiard trajectories fold in $\Rset^3$.
The following results can be found in~\cite[table 1]{CasasRamirez2011}.
First, $m_0$ is the period.
Second, $m_1$ is the number of $xy$-crossings and
$m_2$ is twice the number of turns around the $z$-axis for EH1-caustics;
$m_1$ is twice the number of turns around the $x$-axis and
$m_2$ is the number of $yz$-crossings for EH2-caustics;
$m_1$ is the number of tangential touches with each hyperboloid of one sheet
caustic and $m_2$ is twice the number of turns around the $z$-axis for H1H1-caustics;
$m_1$ is the number of $xz$-crossings and
$m_2$ is the number of $yz$-crossings for H1H2-caustics.
Besides, all periodic trajectories with H1H1-caustics or H1H2-caustics
have even period.
Several periodic billiard trajectories with elliptic period $m = 3$
were depicted in~\cite[Table XV and Table XVII]{CasasRamirez2012}.
We conclude by direct inspection of those pictures
that the nonsingular periodic billiard trajectories inside a
triaxial ellipsoid with elliptic period $m = 3$ have winding numbers
\[
m_2 = 2,\qquad
m_1 = 4,\qquad
m_0 = 6.
\]
This agrees with the formulas~(\ref{eq:WindingNumbers_C_n_n})
given in Theorem~\ref{thm:Main}.
We emphasize that those formulas were not rigorously proved,
because their ``proof'' was based on Conjecture~\ref{con:WindingNumbers}.

Next, we establish the algebraic formulas for the caustic parameters of
other nonsingular periodic billiard trajectories.
We begin with a technical lemma about four-degree polynomials.

\begin{lem}\label{lem:Technical}
Let $Q(x) = (x-\alpha_-)(x-\beta_-)(x-\beta_+)(x-\alpha_+)$ for some
$\alpha_- < \beta_- < \beta_+ < \alpha_+$.
Let $\nu_- \in (\alpha_-,\beta_-)$, $\nu \in(\beta_-,\beta_+)$,
and $\nu_+ \in (\beta_+,\alpha_+)$ be the three roots of $Q'(x)$.
Then
\[
Q(\nu_+) < Q(\nu_-) \Leftrightarrow \alpha_- + \alpha_+ > \beta_- + \beta_+.
\]
\end{lem}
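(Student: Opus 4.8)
The plan is to evaluate $Q$ at the two outer critical points $\nu_-$ and $\nu_+$ and compare. The quartic $Q(x)=(x-\alpha_-)(x-\beta_-)(x-\beta_+)(x-\alpha_+)$ is symmetric about its center of mass under the substitution that shifts the root configuration, so the cleanest approach is to exploit this near-symmetry directly. Let me introduce the center $c=(\alpha_-+\beta_-+\beta_++\alpha_+)/4$ and work with the shifted variable $u=x-c$; then $Q$ becomes an even function of $u$ \emph{plus} a cubic correction whose leading coefficient encodes the asymmetry $(\alpha_-+\alpha_+)-(\beta_-+\beta_+)$. This is where the condition in the lemma will surface.

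**The key computation.**

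First I would expand $Q$ and isolate the odd part. Writing $Q(x)=x^4-e_1x^3+e_2x^2-e_3x+e_4$ with $e_i$ the elementary symmetric functions of the four roots, the shift $x=u+c$ with $c=e_1/4$ kills the cubic term, giving $Q=u^4+pu^2+qu+s$ for suitable $p,q,s$. The crucial observation is that the three critical points of $Q$ are the roots of $4u^3+2pu+q=0$, and that $\nu_-,\nu_+$ correspond to the two \emph{outer} roots while $\nu$ is the middle one. I would compute $Q(\nu_+)-Q(\nu_-)$ and show that its sign is governed entirely by the sign of $q$, since the even part $u^4+pu^2+s$ contributes symmetrically at critical points and the linear term $qu$ is what breaks the tie. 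The remaining step is to identify the sign of $q$ with the pairing asymmetry: a direct expansion shows $q$ is proportional to $(\beta_-+\beta_+)-(\alpha_-+\alpha_+)$, so that $q<0$ exactly when $\alpha_-+\alpha_+>\beta_-+\beta_+$.

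**Handling the critical-point comparison rigorously.**

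The delicate point is turning ``$Q(\nu_+)-Q(\nu_-)$ has the sign of $q$'' into a clean argument, because $\nu_-$ and $\nu_+$ are themselves roots of the cubic and depend on $q$. I would avoid solving the cubic explicitly by using the following device: define $D(q)=Q(\nu_+)-Q(\nu_-)$ as a function of the asymmetry parameter while freezing the even part, and note that $D(0)=0$ by symmetry (when $q=0$ the configuration is symmetric about $c$, forcing $\nu_-=-\nu_+$ and $Q(\nu_-)=Q(\nu_+)$). Then I would argue monotonicity: differentiating $Q$ at a critical point, $\tfrac{\rmd}{\rmd q}Q(\nu_\pm)=\partial_qQ+Q'(\nu_\pm)\tfrac{\rmd\nu_\pm}{\rmd q}=\nu_\pm$ since $Q'(\nu_\pm)=0$, so $D'(q)=\nu_+-\nu_->0$. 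Hence $D$ is strictly increasing in $q$ with $D(0)=0$, giving $D<0\Leftrightarrow q<0\Leftrightarrow\alpha_-+\alpha_+>\beta_-+\beta_+$, which is the claim.

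**Main obstacle.**

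I expect the envelope-theorem argument in the last paragraph to be the real content; the rest is bookkeeping. The subtlety is verifying that the identification of $\nu_-,\nu_+$ as the outer critical points (and $\nu$ as the inner one) persists as $q$ varies, so that $\nu_+-\nu_->0$ holds throughout and the monotonicity is genuine rather than an artifact at a single parameter value. Because the three critical points remain real and separated by the roots (by Rolle, applied to $Q$ with its four distinct real roots), this ordering is stable, and the sign $\nu_+-\nu_->0$ never degenerates. Once that is secured, the equivalence follows immediately.
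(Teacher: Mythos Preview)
Your approach is correct and genuinely different from the paper's. The paper does not shift to the center of mass or run a deformation argument; instead it centers at $\eta=(\beta_-+\beta_+)/2$, the midpoint of the \emph{inner} pair, and computes in one line
\[
Q(\eta+s)-Q(\eta-s)=2s\,(s^2-\xi^2)\big((\beta_-+\beta_+)-(\alpha_-+\alpha_+)\big),\qquad \xi=(\beta_+-\beta_-)/2,
\]
using that $(x-\beta_-)(x-\beta_+)=(x-\eta)^2-\xi^2$ is even about $\eta$. From this identity the sign of $Q(\eta+s)-Q(\eta-s)$ is constant for all $s>\xi$, and since $\nu_+$ minimizes $Q$ on $[\beta_+,\infty)$ one gets $Q(\nu_+)\le Q(\eta+s_0)<Q(\eta-s_0)=Q(\nu_-)$ with $s_0=\eta-\nu_->\xi$. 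This is shorter and avoids any parametric analysis. Your envelope argument, on the other hand, is more conceptual and would transplant to similar problems where no such explicit factorization is available.

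Two small points to tighten. First, ``$q$ is proportional to $(\beta_-+\beta_+)-(\alpha_-+\alpha_+)$'' deserves a line of justification: writing $A(x)=(x-\alpha_-)(x-\alpha_+)$, $B(x)=(x-\beta_-)(x-\beta_+)$ and $c=e_1/4$, one finds $q=Q'(c)=(a-b)\big(A(c)-B(c)\big)$ with $a=(\alpha_-+\alpha_+)/2$, $b=(\beta_-+\beta_+)/2$, and $A(c)-B(c)=r_\beta^2-r_\alpha^2<0$ by the nesting $\alpha_-<\beta_-<\beta_+<\alpha_+$; so the proportionality factor is positive but \emph{not} constant. Second, your persistence argument via Rolle is not quite valid: once you freeze $p,s$ and vary $q$, the deformed quartic $u^4+pu^2+qu+s$ need not retain four real roots, so Rolle cannot be invoked along the path. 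The correct (and easy) fix is to look at the cubic $4u^3+2pu+q$ directly: at the starting value of $q$ it has three real roots (hence $p<0$), and the three-real-root region is $|q|<\sqrt{-8p^3/27}$, an interval containing $0$; since $|q|$ decreases monotonically as you move toward $q=0$, the three critical points remain real, simple, and smoothly varying throughout, which is all your envelope computation needs.
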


\begin{proof}
If we set $\eta = (\beta_+ + \beta_-)/2$ and
$\xi = (\beta_+ - \beta_-)/2$, then
\[
Q(\eta + s) - Q(\eta - s) = 2(s^2-\xi^2)(\beta_- + \beta_+ - \alpha_- - \alpha_+)s,
\qquad \forall s \in \Rset.
\]
On the one hand,
if $\alpha_- + \alpha_+ > \beta_- + \beta_+$,
then $Q(\eta + s) < Q(\eta - s)$ for all $s > \xi$,
which implies that $Q(\nu_+) < Q(\nu_-)$.
On the other hand,
if $\alpha_- + \alpha_+ < \beta_- + \beta_+$,
then $Q(\eta + s) > Q(\eta - s)$ for all $s > \xi$,
which implies that $Q(\nu_+) > Q(\nu_-)$.
Finally,
if $\alpha_- + \alpha_+ = \beta_- + \beta_+$, then
$Q(\eta + s) = Q(\eta - s)$ for all $s \in \Rset$,
which implies that $Q(\nu_+) = Q(\nu_-)$.
\end{proof}

We can now answer some questions about the nonsingular
periodic billiard trajectories found in the second item of
Theorem~\ref{thm:Samples}, although the study is restricted
to the spatial case.

\begin{pro}\label{pro:C43_H1H1}
There exist periodic billiard trajectories inside the triaxial
ellipsoid~(\ref{eq:Ellipsoid_3D}) with elliptic period $m=4$,
signature $\tau = (0,0,1)$, and caustic type H1H1 if and only if
\[
c < ab/(a+b).
\]
Besides, the caustic parameters $\lambda_1$ and $\lambda_2$ of such
periodic billiard trajectories are the roots of the quadratic polynomial
$(s^2_1 - s_2) t^2/2 - s_1 t + 1$, where
\begin{equation}\label{eq:sl}
s_l = 1/a^l + 1/b^l + 1/c^l - 2/d^l,\qquad l=1,2,
\end{equation}
and $d$ is the only root of the cubic polynomial
$t^3 - 2(a+b+c) t^2 + 3(ab+ac+bc) t - 4abc$
in the interval $(a,+\infty)$.
\end{pro}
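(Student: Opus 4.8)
The plan is to specialize the machinery of Section~\ref{sec:General} to $m=4$, $n=3$, $\tau=(0,0,1)$, and caustic type H1H1. Since $m=4\le n+3$, Corollary~\ref{cor:C_m_n_tau} tells us that the desired trajectories exist precisely when $\mathcal{C}(4,3;\tau)$ holds, so I would first fix the decomposition of Definition~\ref{defi:Decompositions}. For type H1H1 the parameters are ordered $0<c<\lambda_1<\lambda_2<b<a$, so $\gamma_1=1/c$, $\gamma_2=1/\lambda_1$, $\gamma_3=1/\lambda_2$, $\gamma_4=1/b$, $\gamma_5=1/a$; and since the single abscissa $\delta:=\delta_1$ lies in $(\gamma_6,\gamma_5)=(0,1/a)$ it is the smallest of all roots, so Lemma~\ref{lem:Ordering} forces the double root $\delta$ onto the $\alpha$-side. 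Reading off the ordering then gives $J_\tau=\{2,3\}$, $K_\tau=\{1,4,5\}$, $V_\tau=\{1\}$, $W_\tau=\emptyset$.

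With this decomposition, Proposition~\ref{pro:C_m_n_tau}(i) says that $\mathcal{C}(4,3;\tau)$ is equivalent to the existence of $\delta\in(0,1/a)$ with
\[
(x^2-ux+v)(x-\delta)^2 = x\,(x-A)(x-B)(x-C) + v\delta^2,
\]
where $A=1/a$, $B=1/b$, $C=1/c$, $u=1/\lambda_1+1/\lambda_2$, and $v=1/(\lambda_1\lambda_2)$. Matching the coefficients of $x^3,x^2,x^1$ yields $u=A+B+C-2\delta$, $v=(AB+AC+BC)-2(A+B+C)\delta+3\delta^2$, together with one cubic constraint on $\delta$. A short elimination should show that this cubic, rewritten in $d=1/\delta$, is exactly $d^3-2(a+b+c)d^2+3(ab+ac+bc)d-4abc$, and that $(1/\lambda_1,1/\lambda_2)$, being the roots of $x^2-ux+v$, become the roots of $(s_1^2-s_2)t^2/2-s_1t+1$ after $t=1/x$, since $u=s_1$ and $v=(s_1^2-s_2)/2$. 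This recovers the stated formulas.

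To locate $d$, I would evaluate $g(t):=t^3-2(a+b+c)t^2+3(ab+ac+bc)t-4abc$ at the ellipsoidal parameters, getting $g(c)=-c(a-c)(b-c)<0$, $g(b)=b(a-b)(b-c)>0$, $g(a)=-a(a-b)(a-c)<0$, and $g(+\infty)=+\infty$. Hence $g$ has three simple real roots, one in each of $(c,b)$, $(b,a)$, $(a,+\infty)$; the last is the unique admissible $d$, and it gives $\delta=1/d\in(0,1/a)$ as the signature demands.

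The remaining and genuinely delicate step is feasibility: the trajectory is of type H1H1 only if the roots of $h(x):=x^2-ux+v$ are real, distinct, and lie in $(B,C)$. The key observation is that evaluating the displayed identity at $x=A,B,C$ gives $h(A)(A-\delta)^2=h(B)(B-\delta)^2=h(C)(C-\delta)^2=v\delta^2$, and since $v>0$ on $(0,1/a)$ (one checks $v(0)=AB+AC+BC>0$ and $v(A)=(A-B)(A-C)>0$, with the vertex of $v$ beyond $A$), this constant is positive, so $h(A),h(B),h(C)>0$ automatically. Consequently the real roots of $h$ are trapped in one gap among $(0,A),(A,B),(B,C),(C,\infty)$ and can only leave it by colliding, not by crossing $A,B,C$. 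As the vertex $u/2=(A+B+C-2\delta)/2$ lies in $(B,C)$ exactly when $A+B-C<2\delta<A+C-B$, I would prove that along the cubic the discriminant $u^2-4v$ carries the sign of $C-A-B$, the boundary $C=A+B$ (that is, $c=ab/(a+b)$) forcing $\lambda_1=\lambda_2$; indeed one verifies directly that at $C=A+B$ the relation $u^2-4v=0$ is compatible with $g(d)=0$ identically, the singular double-caustic limit. The main obstacle is exactly this discriminant sign computation, which I expect to settle by reducing $u^2-4v$ modulo $g(d)=0$ and extracting the factor $C-A-B=1/c-1/a-1/b$. Granting it, the roots of $h$ are real, distinct, and in $(B,C)$ iff $c<ab/(a+b)$, which with Proposition~\ref{pro:C_m_n_tau} closes both implications and yields the stated caustic parameters.
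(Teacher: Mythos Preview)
Your setup is correct and matches the paper exactly: the decomposition $J_\tau=\{2,3\}$, $K_\tau=\{1,4,5\}$, $V_\tau=\{1\}$, $W_\tau=\emptyset$; the derivation of the cubic for $d=1/\delta$ (which is nothing but $Q'(\delta)=0$ for $Q(x)=x(x-A)(x-B)(x-C)$); the sign checks $g(c)<0$, $g(b)>0$, $g(a)<0$ placing the unique admissible root in $(a,+\infty)$; and the identification $u=s_1$, $2v=s_1^2-s_2$ giving the quadratic for $\lambda_1,\lambda_2$.

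The genuine gap is in the feasibility step. Your plan is to show that, on the branch $\delta\in(0,A)$ of the cubic, the discriminant $u^2-4v$ has the sign of $C-A-B$, by ``reducing $u^2-4v$ modulo $g(d)=0$ and extracting the factor $C-A-B$''. This cannot work as stated: $u^2-4v=e_1^2-4e_2+4e_1\delta-8\delta^2$ already has degree~$2$ in $\delta$, so reduction modulo the cubic is trivial, and as a polynomial in $A,B,C,\delta$ it is \emph{not} divisible by $C-A-B$. The statement you want is a sign condition at one particular root of the cubic, not a polynomial identity, and your proposal does not supply a mechanism to prove it. Your auxiliary observation that $h(A),h(B),h(C)>0$ is correct and useful, but by itself only tells you the real roots of $h$ lie in a single gap; it does not select the gap $(B,C)$ nor decide reality.

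The paper closes this step with a much shorter geometric argument, via Lemma~\ref{lem:Technical}. Since $P(x)-P(0)=Q(x)$ and $P(x)=(x-\delta)^2h(x)$, the roots of $h$ are exactly the points (other than the tangency at $\delta$) where the graph $y=Q(x)$ meets the horizontal line $y=Q(\delta)$. The local minima of $Q$ occur at $\delta=\delta_1\in(0,A)$ and at $\delta_3\in(B,C)$, so this line meets the graph at two \emph{distinct} points of $(B,C)$ if and only if $Q(\delta_3)<Q(\delta_1)$. Lemma~\ref{lem:Technical} (applied with $\alpha_-=0$, $\beta_-=A$, $\beta_+=B$, $\alpha_+=C$) converts this into $0+C>A+B$, i.e.\ $1/c>1/a+1/b$, i.e.\ $c<ab/(a+b)$. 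This simultaneously handles reality of the roots, their location in $(B,C)$, and their distinctness, without any discriminant computation.
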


\begin{proof}
If $\tau = (0,0,1)$,
the decompositions presented in Definition~\ref{defi:Decompositions}
are $J_\tau = \{2,3\}$, $K_\tau = \{1,4,5\}$, $V_\tau = \{1\}$,
and $W_\tau = \emptyset$.
Thus, $\mathcal{C}(4,3;\tau)$ holds if and only if there exists
some $\delta_1 \in (0,\gamma_5)$ such that the following two
equivalent properties hold:
\begin{enumerate}[(i)]
\item\label{item:Decompositions-first}
$P(x) = (x-\delta_1)^2(x-\gamma_2)(x-\gamma_3) \Rightarrow
Q(x) := x(x-\gamma_1)(x-\gamma_4)(x-\gamma_5) = P(x) - P(0)$.
\item\label{item:Decompositions-second}
$\gamma^l_2 + \gamma^l_3 + 2\delta_1^l = \gamma_1^l + \gamma_4^l + \gamma_5^l$, for $l=1,2,3$.
\end{enumerate}

If the caustic type is H1H1, then $0 < c < \lambda_1 < \lambda_2 < b < a$, so
\[
\gamma_1 = 1/c,\qquad
\gamma_2 = 1/\lambda_1,\qquad
\gamma_3 = 1/\lambda_2,\qquad
\gamma_4 = 1/b,\qquad
\gamma_5 = 1/a.
\]
Let $\rme_l = \rme_l(\gamma_1,\gamma_4,\gamma_5)$ for $l = 1,2,3$.
We set $d = 1/\delta_1 > a$.
Then $Q(x) = x^4 - \rme_1 x^3 + \rme_2 x^2 - \rme_1 x$ and
$\delta_1$ is a root of $Q'(x) = 4x^3 - 3 \rme_1 x^2 + 2 \rme_2 x - \rme_3$.
Hence, $d$ is a root of the cubic polynomial
\[
q(t) = -abc t^3 Q'(1/t) = t^3 - 2(a+b+c) t^2 + 3(ab+ac+bc) t - 4abc.
\]
We note that $q(0) = -4abc < 0$, $q(b) = -b(b-a)(b-c) > 0$,
$q(a) = -a(a-b)(a-c) < 0$, and $\lim_{t \to +\infty} q(t) = +\infty$.
This shows that $q(t)$ has just one root in the interval $(a,+\infty)$.

From property~(\ref{item:Decompositions-second}) above,
we deduce that the sums $s_l := 1/\lambda_1^l + 1/\lambda_2^l$ verify
relations~(\ref{eq:sl}).
Besides, $1/\lambda_1$ and $1/\lambda_2$ are the roots of
$(x-1/\lambda_1)(x-1/\lambda_2) = x^2 - s_1 x + (s_1^2-s_2)/2$,
so that $\lambda_1$ and $\lambda_2$ are the roots of the quadratic polynomial
$(s^2_1 - s_2) t^2/2 - s_1 t + 1$.

We look for ellipsoidal parameters such that the previous periodic
trajectories exist.
From property~(\ref{item:Decompositions-first}) above,
we deduce that such ellipsoidal parameters exist
if and only the graph $\{ y = Q(x) \}$
intersects the horizontal line $\{ y = Q(\delta_1)\}$
at two different points $\gamma_2,\gamma_3 \in (\gamma_4,\gamma_5)$
or, equivalently, if and only if $Q(\delta_3) < Q(\delta_1)$,
where $\delta_1 < \delta_2 < \delta_3$ are the three ordered roots of
the derivative of the polynomial $Q(x) = x(x-\gamma_1)(x-\gamma_4)(x-\gamma_5)$.
But
\[
Q(\delta_3) < Q(\delta_1) \Leftrightarrow
\gamma_1 > \gamma_4 + \gamma_5 \Leftrightarrow
c < ab/(a+b),
\]
according to Lemma~\ref{lem:Technical}.
\end{proof}

As we have explained before,
the period and winding numbers of any nonsingular periodic billiard
trajectory can be determined by direct inspection of its corresponding figure.
A periodic billiard trajectory with elliptic period $m = 4$ and caustic type H1H1
whose caustic parameters verify the relations given in Proposition~\ref{pro:C43_H1H1}
is displayed in~\cite[Figure 13]{CasasRamirez2011}.
That trajectory has (Cartesian) period $m_0 = 4$ and winding numbers
\[
m_2 = 2,\qquad  m_1 = 3,\qquad m_0 = 4.
\]
Hence, $\gcd(m_0,m_1,m_2) = 1$,
so the elliptic winding numbers are
$\widetilde{m}_2 = 2$, $\widetilde{m}_1=3$, and $\widetilde{m}_0=4$.
This result reinforces Conjecture~\ref{con:tau}.
Besides,
these nonsingular four-periodic billiard trajectories with caustic type H1H1
are quite interesting,
because they display the minimal period among all nonsingular periodic
billiard trajectories; see~\cite[Theorem 1]{CasasRamirez2011}.

We end the study at this point.
We just mention that there exist similar results when the signature or the
caustic type do not coincide with the ones given in Proposition~\ref{pro:C43_H1H1}.
Analogously, the case $m = 5$ can be dealt with using the same techniques,
although the final formulas become more complicated.
For instance,
it can be easily checked that the caustic parameters $\lambda_1$ and $\lambda_2$
of the billiard trajectories with elliptic period $m = 5$
and signature $\tau = (1,1,0)$ verify the homogeneous symmetric
polynomial equations
\[
8 s_3 + s_1^3 = 6 s_1 s_2,\qquad
16 s_4 + s_1^4 = 4s_1^2s_2 + 4 s_2^2,
\]
where $s_l = 1/a^l + 1/b^l + 1/c^l + 1/\lambda_2^l + 1/\lambda_1^l$
for $l=1,2,3,4$.
Each of the other signatures $\tau \in \mathcal{T}(5,3)$
gives rise to similar homogeneous ---although not symmetric--- polynomial
equations of degrees three and four in the variables $1/a$, $1/b$, $1/c$,
$1/\lambda_1$ and $1/\lambda_2$.
We left the details to the reader.
Finally, we remember that the original matrix formulation of
the generalized Cayley condition $\mathcal{C}(5,3)$ gives rise to two
homogeneous symmetric polynomial equations of degrees 23 and 24
in those five variables, as explained in Section~\ref{sec:Practical}.
This confirms, once again, that the polynomial formulation
offers great computational advantages over the matrix formulation.

\section*{References}

\end{document}